\newcommand*\bigcdot{\mathpalette\bigcdot@{.3}}
\newtheorem{tm}{Theorem}[section]
\newtheorem*{tm*}{Theorem}
\newtheorem{coro}[tm]{Corollary}
\newtheorem{lemma}[tm]{Lemma}
\newtheorem{prop}[tm]{Proposition}
\theoremstyle{remark}
\newtheorem{remark}[tm]{Remark}
\newcommand{\cG}{\mathcal{G}}
\newcommand{\cF}{\mathcal{F}}
\newcommand{\NN}{\mathbb{N}}
\newcommand{\RR}{\mathbb{R}}
\newcommand{\sfv}{\mathsf{v}}
\newcommand{\sfx}{\mathsf{x}}
\title{Sticky diffusions on star graphs : characterization and It\^o formula}
\author{Jules Berry}
\address[J. Berry]{Univ. Rennes, INSA, CNRS, IRMAR - UMR 6625, Rennes F-35000, France}
\email{jules.berry@insa-rennes.fr }
\author{Fausto Colantoni}
\address[F. Colantoni]{Department of Basic and Applied Sciences for Engineering, Sapienza University of Rome, Rome, Italy}
\email{fausto.colantoni@uniroma1.it }
\date{\today}
\begin{document}
\maketitle
\begin{abstract}
We investigate continuous diffusions on star graphs with sticky behavior at the vertex. These are Markov processes with continuous paths having a positive occupation time at the vertex. We characterize sticky diffusions as time-changed nonsticky diffusions by adapting the classical technique of It\^o and McKean. We prove a form of It\^o formula, also known as  Freidlin-Sheu formula, for this type of process. As an intermediate step, we also obtain a stochastic differential equation satisfied by the radial component of the process. These results generalize those already known for sticky diffusions on a half-line and skew sticky diffusions on the real line.
\end{abstract}
\smallskip
\noindent \textbf{Keywords:}{ Diffusion processes, Sticky boundary conditions, It\^o formula, Star graphs.}\\
\smallskip
\noindent \textbf{AMS subject classification:}{ 60J60, 60J50, 60J55, 60H10, 05C99.}
\section{Introduction}
In the unidimensional setting, sticky processes find their roots\footnote{See \cite{P2015} for historical aspects of these matters.} in the seminal work of Feller \cite{F1952,F1954,F1957}. These processes are characterized by a second order term in the boundary condition defining the domain of their generator. For instance, the sticky (reflected) Brownian motion on $\RR_+$ was first obtained by considering the Feller semigroup with generator given by
\begin{equation}\label{eq:sticky_BM}
\begin{split}
 L f(x) & = \frac{1}{2} f''(x) \quad \textnormal{for } x > 0, \\
 D(L)  & = \left \{ f \in C^2((0,+\infty)) \cap C_0(\RR_+) : \eta f''(0^+) = f'(0^+) \right \},
\end{split}
\end{equation}
where $\eta \geq 0$ is the stickiness parameter. The term ``sticky'' is motivated by the fact that, when $\eta > 0$, the occupation time of the process at $0$ is positive, despite the fact that the set of zeros of the process has empty interior, while it is null for the reflected Brownian motion (which corresponds to the case $\eta =0$). It was later established by It\^o and McKean in \cite{ItoMcKean} that the sticky Brownian motion could be characterized by suitable random time-changes of the reflected Brownian motion. More precisely, let $B$ be a reflected Brownian motion and denote by $\ell_B(t)$ its local time at $0$. Define the continuous increasing process $V \colon \RR_+ \to \RR_+$ by
\begin{equation}\label{eq:time_change_BM}
 V(t) := t + \eta \ell_B(t),
\end{equation}
consider $V^{-1} \colon \RR_+ \to \RR_+$ its right inverse, \textit{i.e.},
\[
 V^{-1}(t) = \inf \left \{ s > 0 : V(s) > t \right \},
\]
and define the time-changed process $\hat B(t) := B(V^{-1}(t))$. It can then be proved that $\hat B$ is a Markov process whose semigroup has generator given by \eqref{eq:sticky_BM}.
From the definition \eqref{eq:time_change_BM}, we observe that $V$ is a strictly increasing process that grows faster than $t$ when $B$ is at $0$. Therefore, its inverse $V^{-1}$ grows more slowly than $t$, causing the time-changed process $\hat B$ to become ``stuck'' at $0$. This is a direct consequence of the fact that the local time increases only at $0$, while it remains constant otherwise.
We also note that, when $\eta = 0$, we have $V(t) = t = V^{-1}(t)$, meaning that $\hat B$ coincides with $B$. When $\eta > 0$, outside $0$, $\hat B$ follows the trajectories of $B$, but with a delay.
Recently, it was proved in \cite{EP2014} that the sticky Brownian motion satisfies the stochastic differential equation
\begin{equation}
 \begin{cases}
  dX(t) = \mathbf 1_{\{ X(t) > 0 \}} dW(t) + \frac{1}{2} d\ell_X(t), \\
  \eta \ell_X(t) = \int_0^t \mathbf 1_{\{X(s) = 0\}} ds,
 \end{cases}
\end{equation}
for some one-dimensional Brownian motion $W$, where $\ell_X$ is the local time of $X$ at $0$.
More generally, one can consider continuous sticky diffusions on $\RR_+$ as the Markov processes with generator given by
\begin{equation}\label{eq:sticky_diffusion}
\begin{split}
 L f(x) & = \frac{1}{2} \sigma^2(x) f''(x) + b(x) f'(x) \quad \textnormal{for } x > 0, \\
 D(L)  & = \left \{ f \in C^2((0,+\infty)) \cap C_0(\RR_+) : \eta L f(0^+) = f'(0^+) \right \}.
\end{split}
\end{equation}
In \cite{salins}, it is established that such diffusions are random time-changes of the reflected diffusion ($\eta = 0$) and satisfy
\begin{equation}
 \begin{cases}
  dX(t) = \sigma(X(t)) \mathbf 1_{\{ X(t) > 0 \}} dW(t) + b(X(t))\mathbf 1_{\{ X(t) > 0 \}} dt + \frac{1}{2} d\ell_X(t), \\
  \eta \ell_X(t) = \int_0^t \mathbf 1_{\{X(s) = 0\}} ds,
 \end{cases}
\end{equation}
for some Brownian motion $W$, where $\ell_X$ is the local time of $X$ at $0$. \\

Another interesting process is the skew Brownian motion introduced in \cite{walsh1978diffusion}. Loosely speaking, it is Brownian motion on $\RR$ which, upon reaching $0$, is reflected in $\RR_+$ with probability $p$ and in $\RR_-$ with probability $1-p$. The generator of this process is given by
\begin{equation}\label{eq:skew_BM}
 \begin{split}
  L f(x) & = \frac{1}{2} f''(x) \quad \textnormal{for } x \neq 0, \\
 D(L)  & = \left \{ f \in f \in C^2(\RR \setminus \{ 0 \}) \cap C_0(\RR) : \,  \begin{array}{|c}  f''(0^+) = f''(0^-), \\  p f'(0^+) = (1 - p) f'(0^-) \end{array} \right \}.
 \end{split}
\end{equation}
We refer the reader to \cite{lejay} and references therein for further information on this class of processes. The version of this process exhibiting a sticky behavior at $0$ is then obtained by replacing $D(L)$ in \eqref{eq:skew_BM} by
\begin{equation}
 D(L)  = \left \{ f \in C^2(\RR \setminus \{0\}) \cap C_0(\RR) : \,  \begin{array}{|c} f''(0^+) = f''(0^-), \\  \eta f''(0) =  p f'(0^+) - (1 - p) f'(0^-) \end{array} \right \}.
\end{equation}
This process was recently studied in \cite{salins,skew-sticky} where it is shown that the process can be obtained as a random time-change of a skew Brownian motion and that it is a weak solution to some stochastic differential equation. In addition, in \cite{skew-sticky}, an explicit expression for the transition kernel of the process is given. Similar results were previously obtained in \cite{H2007,EP2014,B2014} in the special case $p = 1/2$, in which case it is also called two sided sticky Brownian motion. For more general diffusion processes, considered in \cite{salins}, the generator takes the form
\begin{equation}\label{eq:sticky_diffusion}
\begin{split}
 L f(x) & = \frac{1}{2} \sigma^2(x) f''(x) + b(x) f'(x) \quad \textnormal{for } x \neq 0, \\
 D(L)  & = \left \{ f \in C^2(\RR \setminus \{0\}) \cap C_0(\RR) : \begin{array}{|c} Lf(0^+) = Lf(0^-), \\  \eta L f(0) = p f'(0^+) - (1 - p) f'(0^-) \end{array} \right \}.
\end{split}
\end{equation}
In \cite{salins}, it is established that these processes can be constructed from the skew diffusions  processes ($\eta = 0$) by using the It\^o-McKean technique and satisfy
\begin{equation}
 \begin{cases}
  dX(t) = \sigma(X(t)) \mathbf 1_{\{ X(t) \neq 0 \}} dW(t) + b(X(t))\mathbf 1_{\{ X(t) \neq 0 \}} dt + (2p - 1) d\ell_X(t), \\
  \eta \ell_X(t) = \int_0^t \mathbf 1_{\{X(s) = 0\}} ds,
 \end{cases}
\end{equation}
for some Brownian motion $W$, where $\ell_X(t)$ is the symmetric local time of $X$ at $0$. \\

Stochastic processes on graphs are an intriguing extension of unidimensional ones, useful for many applications involving complex systems on networks. They were first introduced by Feller's abstract approach in \cite{FreidlinWentzell}. A fundamental example is certainly the Brownian motion on a star graph, whose paths are constructed in \cite{russians1,russians2}. We can think of it as a stochastic process defined on a number of edges, each of which is a copy of the positive half-line, connected by a single vertex. On each of these edges, the process behaves like a reflected Brownian motion and randomly switches to another one upon reaching the vertex. In \cite{russians1,russians2}, the authors provide a full characterization of the infinitesimal generator of the corresponding Feller semigroup. They also provide a construction of the paths of the process in every case by adapting techniques used for Brownian motions on an interval. In particular the Brownian motion with sticky behavior at the vertex is obtained by using the It\^o-McKean random time-change. The construction was then generalized to generic metric graphs, i.e. graphs with more than one vertex, in \cite{russians3}. Recently, the analysis and construction of Brownian motion with a sticky point, even under non-local dynamic conditions leading to trapping behavior at the vertex, has also been addressed in \cite{BonDov}. A more general case is the Walsh Brownian motion \cite{walsh1978diffusion,WalshBM}, where once the process reaches the vertex, it selects a new edge according to a continuous distribution on $ [0, 2\pi)$. An It\^o formula for this setup is derived in \cite{Hajri-Ito}.

Paths of general nondegenerate diffusions on star graphs were studied in \cite{FreidlinSheu} under the assumption that the process is nonsticky at the vertex. There, it is established that the radial component of the process is a weak solution to some stochastic differential equation and an It\^o formula is obtained. The authors also prove the existence of a process which can be interpreted as the local time of the nonsticky process at the vertex. Our goal is to extend these results to processes exhibiting sticky behavior at the vertex. In addition to \cite{FreidlinWentzell}, the semigroups associated to sticky diffusions on metric graphs have also been analysed in \cite{sticky-graphs} in the special case of pure diffusion processes and a form of singular convergence theorem is also provided. We also mention that, recently, a more general class of nonsticky diffusion processes has been proposed in \cite{MO2025}.\\

The main result of this work is the derivation of an It\^o formula for general nondegererate diffusions on star graphs. As an intermediate steps, we also obtain a stochastic differential equation satisfied by radial component of the process and a characterization of sticky diffusions as time-changes of nonsticky ones. On the one hand, our results extend those available in the unidimensional setting \cite{salins,skew-sticky} by allowing more than one or two edges. On the other hand, we generalize those from \cite{FreidlinSheu} by considering sticky processes. This is achieved by making a systematic use of the It\^o-McKean random time change in this setting, where we use the local time process obtained in \cite{FreidlinSheu} to define the time change. We also apply this It\^o formula to obtain representations formulas for some differential equations posed on the star graph. In particular we obtain a Feynman-Kac formula in this context.
Our choice to focus on diffusions on star graphs was made in order to simplify notations and to avoid obfuscating the exposition with technicalities. However, we expect that our conclusions can be extended to general metric graphs.
\\

As was already mentioned, our main motivation in writing this paper was the derivation of the It\^o formula for sticky diffusions. It is used in \cite{berry2024stationary} to prove a verification theorem in the context of an optimal control problem of sticky diffusions on metric graphs. Nevertheless, we believe that our results can be useful in various applications. One example can be found the recent study of Mean Field Game (MFG) problems on networks. We briefly recall that the theory of MFG was introduced independently in \cite{HMC2006} and \cite{LL2007} and aims at studying Nash equilibria of large symmetric dynamic games. General references on this theory are \cite{BFY2013,GS2014,CD2018,ACDPS2020}. Adaptation of the theory to metric graphs was first proposed in \cite{camilli} and further developed in \cite{CM2016,ADLT2019,ADLT2020,CM2024}. In all of these references agents are represented by nonsticky diffusions. In contrasts, the recent paper \cite{berry2024stationary} mentioned above deals with games were players present a sticky behavior at vertices. MFG of nonsticky diffusions were recently used in \cite{CFM2024} for a model of urban planning. Finally we also mention that another application of nonsticky diffusions to spatial economics can be found in \cite{AM2022}. We believe that considering nonsticky diffusions would be an interesting extension of these models.\\

The rest of the paper is structured as follows. Our main results are stated in \cref{section:statement} together with the necessary notations and definitions. \cref{section:preliminaries} contains a summary of the results from \cite{FreidlinWentzell,FreidlinSheu} that will be useful in our study of sticky diffusions on $\Gamma$ as well as some technical lemmas on these processes. \cref{section:main_results} contains the proofs of our main results and we derive the representation formulas in \cref{section:representation_formulas}. Finally \cref{section:BVP} contains a technical result about a system of ODEs which is used in \cref{section:preliminaries}.

\section{Notations and statement of main results}
\label{section:statement}

In order to state our main results, we now introduce some notations and definitions. Let $N$ be a positive natural number and let $E$ denote a family of copies of the positive half-line, defined as the disjoint union
\begin{align*}
{E}:= \bigsqcup_{i=1}^N [0,\infty).
\end{align*}
For notational simplicity, we represent each point in $E$ by a pair $(j,x)$, where $j \in \{1,\dots,N\}$ and $x$ represents the Euclidean distance from the origin. Following the approach in \cite{mugnolo2019actually}, we define an equivalence relation on $E$ as follows:
\begin{align*}
(j,x) \sim (i, y) \ \text{ if and only if } \ \begin{cases}
j=i \ \text{and } x=y\\
x=y=0, \ \text{for any } j,i.
\end{cases}
\end{align*}
Then, the star graph is the quotient space $\Gamma:= {E}/ \sim$. In this graph, there is a unique vertex $\sfv=(\cdot,0)$ that is common to all edges. We emphasize that with this configuration, we do not consider the angles between the edges, allowing it to maintain its abstract nature. Moreover, $\Gamma$ forms a metric space once provided with the following metric:
	\begin{align}
	\label{distance}
	d((j,x),(i,y)):=\begin{cases}
	\lvert x - y \rvert \quad & \textnormal{if } i = j, \\
	x + y \quad & \textnormal{if } i \neq j.
	\end{cases}
	\end{align}

	\begin{prop}
	 The metric space $(\Gamma,d)$ is a locally compact Polish space.
	\end{prop}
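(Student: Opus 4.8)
The plan is to verify the three defining properties of a locally compact Polish space for $(\Gamma, d)$: that it is a complete and separable metric space, and that every point has a compact neighborhood. The natural strategy is to transfer these properties from the half-line $[0,\infty)$ along the canonical projection $\pi \colon E \to \Gamma$, since each edge of $\Gamma$ is an isometric copy of $[0,\infty)$ and $\Gamma$ is a finite union of these edges glued at a single point.

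First I would observe that $d$ is genuinely a metric: symmetry is immediate from the definition \eqref{distance}, positivity and the identity of indiscernibles follow because $d((j,x),(i,y)) = 0$ forces either $j = i$ and $x = y$, or $x = y = 0$, which in $\Gamma$ are the same point; the triangle inequality is checked by distinguishing whether the three points involved lie on one, two, or three distinct edges, in each case reducing to the triangle inequality on $\RR$ (the only slightly delicate case being when the midpoint is the vertex, where $d$ becomes the sum $x + y$, and one uses $|x - y| \le x + y$). For \emph{separability}, I would take $D := \{\sfv\} \cup \bigcup_{i=1}^N \{ (i, q) : q \in \QQ \cap (0,\infty) \}$; this is a countable set, and it is dense because on each edge the rationals are dense in $[0,\infty)$ and the metric $d$ restricted to a single edge is the Euclidean one. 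For \emph{completeness}, given a Cauchy sequence $(x_n)$ in $\Gamma$, I would argue that either the sequence is eventually within distance, say, less than the distance from its tail to $\sfv$, so that from some index on it stays on a single edge $i$ (a closed subset isometric to the complete space $[0,\infty)$) and hence converges there; or it has a subsequence approaching $\sfv$, in which case the Cauchy property forces $x_n \to \sfv$. Finally, for \emph{local compactness}, at a point $(i,x)$ with $x > 0$ a small closed ball of radius $r < x$ lies entirely in edge $i$ and is isometric to $[x-r, x+r]$, which is compact; at the vertex $\sfv$, the closed ball $\bar B(\sfv, r)$ is the union over $i$ of the closed intervals $\{(i,y) : 0 \le y \le r\}$, a finite union of compact sets glued at $\sfv$, hence compact.

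The step requiring the most care is the triangle inequality, because of the edge-dependent definition of $d$; one must systematically enumerate the cases according to which edges the three points $(j,x)$, $(k,z)$, $(i,y)$ lie on and check, for instance, that when $(j,x)$ and $(i,y)$ are on the same edge but $(k,z)$ is on a different one, $|x - y| \le (x + z) + (z + y)$, which is clear. A clean way to avoid the case analysis altogether is to note that $d$ is exactly the quotient (path) metric induced on $\Gamma = E/\!\sim$ by the Euclidean metrics on the edges — that is, $d((j,x),(i,y))$ is the infimum of lengths of paths in $\Gamma$ joining the two points — and quotient pseudometrics automatically satisfy the triangle inequality; one then only needs to check that this path metric takes the stated closed form, which is immediate since any path between points on different edges must pass through $\sfv$. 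This observation also makes completeness and local compactness transparent, since $\Gamma$ is then a finite metric graph with the standard length metric.
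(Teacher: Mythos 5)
Your proof is correct and more thorough than the paper's, which only verifies completeness in detail and merely asserts that local compactness (and separability) carries over from $\RR_+$. The one place where you genuinely diverge is the completeness argument: you argue by a dichotomy on whether the Cauchy sequence has a subsequence approaching $\sfv$ or eventually stays bounded away from $\sfv$, whereas the paper uses a pigeonhole argument — since there are finitely many edges, some subsequence $\sfx_{\varphi(n)} = (i, x_{\varphi(n)})$ stays on a single edge $i$; that subsequence is Cauchy in $[0,\infty)$, hence converges, and then the standard fact that a Cauchy sequence with a convergent subsequence converges finishes the proof. The pigeonhole route is a bit cleaner, since it avoids the case split and the auxiliary observation that a Cauchy sequence with a subsequence tending to $\sfv$ must itself tend to $\sfv$; but both are perfectly valid. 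Your remark that $d$ is precisely the quotient length metric on $\Gamma$, so that the triangle inequality, completeness, and local compactness follow from general facts about finite metric graphs, is a nice structural point that the paper does not make explicit.
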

	\begin{proof}
	 We have to prove that $(\Gamma,d)$ is a locally compact complete and separable metric space. The fact that it is locally compact follows from the fact that $\RR_+$ is locally compact and separable. In order to prove the completeness of the metric, let $(\sfx_n)_{n \in \NN}$ be a Cauchy sequence in $\Gamma$. Since $\Gamma$ has finitely many edges, there exists a strictly increasing function $\varphi \colon \NN \to \NN$ and $i \in \{1, \dots, N \}$ such that $\sfx_{\varphi(n)} = (i, x_{\varphi(n)})$ for all $n \in \NN$. Since $(\sfx_n)_{n \in \NN}$ is a Cauchy sequence in $\Gamma$, it follows from the definition of the metric $d$ that $x_{\varphi(n)}$ is a Cauchy sequence in $\RR_+$. We then deduce from the completeness of $\RR_+$ that there exits $\bar x \in \RR_+$ such that $(i, x_{\varphi(n)}) \xrightarrow{n \to \infty} (i,\bar x) =: \bar \sfx$. Hence, $(x_n)_{n \in \NN}$ admits a convergent subsequence. It then follows from standard properties of Cauchy sequences that the full sequence must actually converge to $\bar \sfx$.
	\end{proof}

Since $(\Gamma,d)$ is a metric space, a function $f: \Gamma \to \mathbb{R}$ is considered continuous if it satisfies the standard definition of continuity for functions defined on metric spaces  and we denote by $C(\Gamma)$ the space of continuous functions on $\Gamma$ and $C_0(\Gamma)$ the space of continuous functions vanishing at infinity on each edge. We provide both $C(\Gamma)$ and $C_0(\Gamma)$ with the topology of uniform convergence on compact sets. We can represent a function $f \colon \Gamma \to \RR$ as
\begin{align*}
f=\bigoplus_{i=1}^N f_i \quad  \textnormal{satisfying } f_i(0) = f_j(0) \textnormal{ for all } i,j \in \{ 1, \dots, N \},
\end{align*}
where the notation $\bigoplus_{i=1}^N f_i$ stands for $(f_1, \dots, f_N)$ and each $f_i(\cdot) = f(i, \cdot): \RR_+ \to \mathbb{R}$ corresponds to the function restricted to the edge $i$. With this representation we have
\[
C(\Gamma)=\left\{f=\bigoplus_{i=1}^N f_i \in \bigoplus_{i=1}^N  C\left([0,\infty) \right) \text{ and } f_i(0)=f_j(0), \, \forall i, j \in \{1,\dots,N\}\right\},
\]
where $\bigoplus_{i=1}^N  C\left([0,\infty) \right)$ denotes the external direct sum of $N$ copies of $ C\left([0,\infty) \right)$.

Regarding differentiation, derivatives outside the vertex are viewed as, for $\mathsf{x}=(i,x) \in \Gamma \setminus \{\sfv\}$
\begin{align*}
f^\prime(\mathsf{x})= \frac{d}{dx} f_i (x).
\end{align*}
The same applies to higher order derivatives as well. We then define, for each $k \in \mathbb{N}$,
\[
C^k(\Gamma):=\left\{f=\bigoplus_{i = 1}^N f_i \in \bigoplus_{i=1}^N C^k\left([0,\infty) \right) :\,  f \in C(\Gamma) \right\} .
\]
Notice that, in the definition of $C^k(\Gamma)$, only the function is assumed to be continuous at the vertex and the derivatives may be discontinuous. We will also need to consider the following spaces :
\[
C^k_b(\Gamma) = \left \{ f \in C^{k}(\Gamma) :\, f^{(h)}_i \in C_b \left([0,\infty) \right) \, \textnormal{for all } h \in \{0, \dots, k\} \textnormal{ and } i \in \{1, \dots, N \} \right \}
\]
and, for every $k \in \NN$,
\[
PC^k(\Gamma) = \bigoplus_{i=1}^N C_b^k([0,+\infty)),
\]
with the convention that $PC^0(\Gamma) = PC(\Gamma)$. With an abuse of notations, we will say that a function $f \colon \Gamma \to \RR$ belongs to $PC^k(\Gamma)$ if each $f_i, \dots , f_i^{(k)}$ belongs to $C_b((0,+\infty))$ and can be continuously extended to $[0,+\infty)$.
Finally, for $k \geq 0$, we denote by $C_b^{1,k}(\RR_+ \times \Gamma)$ the space of function $f \colon \RR_+ \times \Gamma \ni (t,\sfx) \mapsto f(t,\sfx) \in \RR$ such that $f, \, \partial_t f \in C_b(\RR_+ \times \Gamma)$ and $\partial_x f_i, \dots,\partial_x^k f_i \in C_b(\RR_+ \times \RR_+)$ for each $i \in \{1, \dots, N \}$. A comprehensive treatment of functional spaces and equations on the graph is available in \cite{mugnolo-book}. \\

We now turn to defining diffusions on the graph. Let $L_i$, for $i=1,\dots, N$, be the linear operators defined on $PC^2(\Gamma)$ by
\begin{align}
\label{generator}
L_i f(x)= \frac{1}{2} \sigma_i^2 (x) f_i^{\prime \prime} (x) + b_i(x) f_i^\prime (x), \quad x>0,
\end{align}
where $\sigma, b \in PC(\Gamma)$ and we assume that
\begin{equation}\label{eq_diffusion_positive}
 \textnormal{there exists a constant $\sigma_0>0$ such that $\sigma(x)>\sigma_0$ for all $\sfx \in \Gamma$.}
\end{equation}
	Observe that, for every $f \in C_b^2(\Gamma)$ with $Lf \in C(\Gamma)$, i.e. such that
	\begin{equation}\label{eq:continuity_generator}
	\lim_{x \to 0} L_i f(x) = \lim_{y \to 0} L_j f(y) \quad \textnormal{for every } i,j \in \{ 1, \dots, N \},
	\end{equation}
	the operator defined in \eqref{generator} is an operator on the graph $\Gamma$. Indeed, given a point $\mathsf{x}=(i,x) \in \Gamma$, we alternatively write
	\begin{align}
	\label{generator2}
	L f(\mathsf{x})=\frac{1}{2} \sigma^2 (\sfx) f^{\prime \prime} (\sfx) + b(\sfx) f^\prime (\sfx), \quad \textnormal{for all }\mathsf{x} \in \Gamma,
	\end{align}
	where \eqref{eq:continuity_generator} ensures that the expression \eqref{generator2} makes sense for $\mathsf{x} = \sfv$.
In order to obtain a process on the graph, we need a transition condition on the vertex, which, in our case, takes the form
	\begin{align}
	\label{kirchoff}
	\eta Lf(\sfv) = \sum_{i=1}^{N} \rho_i f^\prime_i(0),
	\end{align}
where $\eta \geq 0$,  $\rho_i > 0$ for all $i \in \{1 ,\dots, N \}$ and $\sum_{i=1}^{N} \rho_i = 1$. In the end, the generator of a diffusion process on $\Gamma$ is given by $(L,D(L))$ where $L$ is defined by \eqref{generator2} and
\begin{equation}\label{eq:domain}
D(L) = \left \{ C^2(\Gamma) \cap C_0(\Gamma) :\, Lf \in C_0(\Gamma) \, \textnormal{and } \eta Lf(\sfv) =\sum_{i=1}^{N} \rho_i f^\prime_i(0)  \right \},
\end{equation}
The process will be called a \emph{nonsticky diffusion} if $\eta =0$ and a \emph{sticky diffusion} otherwise. The constant $\eta$ will be called the \emph{stickiness parameter} of the process. \\

We are now able to state our main results.

\begin{tm*}
	Let $X(t) = (i(t), x(t))$ be a sticky diffusion on a star graph $\Gamma$ with stickiness parameter $\eta > 0$. Then,
	\begin{enumerate}[label={\rm(\roman*)}]
	 \item \label{item:characterization} there exists a nonsticky diffusion $Y$ on $\Gamma$ and a strictly increasing continuous function $V^{-1}$ such that $X$ and $Y(V^{-1}(\cdot))$ are indistinguishable. Moreover, there exists an increasing process $\ell_X$, adapted to the completion of the natural filtration of $X$ with respect to the collection of probability measures $\left ( \mathbf P_\sfx \right)_{\sfx \in \Gamma}$, such that $\ell_X$ almost surely increases only when $X = \sfv$ and $V^{-1}(t) = t - \eta \ell_X(t)$;

	 \item \label{item:SDE} up to an extension of the filtered probability space, there exists a one-dimensional Brownian motion $W$ such that, almost surely, we have
	 \begin{equation} \label{sde:sticky_intro}
		\begin{cases}
		dx(t) = \sigma_{i(t)}(x(t)) \mathbf{1}_{\{x(t) \ne 0\}}\, d W(t) + b_{i(t)}(x(t)) \mathbf{1}_{\{x(t) \ne 0\}}\, dt + d\ell_X(t), \\
		\eta \ell_X(t) = \int_0^t \mathbf 1_{\{X(s) = \sfv \}} ds;
		\end{cases}
	\end{equation}

	\item \label{item:Ito} for every $f \in C_b^{1,2}(\RR_+ \times \Gamma)$, almost surely, we have the It\^o formula
	\begin{equation}\label{eq:Ito_1_intro}
	\begin{split}
	& f(t,X(t)) = f(0,X(0)) + \int_0^t \left ( \partial_s f(s,X(s)) +  Lf(s,X(s)) \right ) \mathbf 1_{\{ X(s) \neq \sfv \}}  ds \\
	& \quad + \int_0^t \sigma(X(s)) \partial_x f(s,X(s)) \mathbf{1}_{\{X(s) \neq \sfv \}} d  W(s)
	 + \int_0^t \left ( \eta\, \partial_s f(s,\sfv) +  \sum_{i=1}^N \rho_i \partial_x f_i(s,0) \right ) d \ell_X(s).
	\end{split}
	\end{equation}
	\end{enumerate}
\end{tm*}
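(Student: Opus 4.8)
The plan is to run the It\^o--McKean time-change construction in this graph setting, taking as input the nonsticky diffusion and its vertex local time provided by \cite{FreidlinWentzell,FreidlinSheu} (recalled in \cref{section:preliminaries}), and then to obtain \ref{item:SDE} and \ref{item:Ito} by transporting the Freidlin--Sheu SDE and It\^o formula through this time change. So I would first fix a nonsticky diffusion $Y$ on $\Gamma$, write $Y(t)=(j(t),y(t))$ with $y(t)=d(Y(t),\sfv)$, and let $\ell_Y$ be its local time at $\sfv$ from \cite{FreidlinSheu}. Set $V(t):=t+\eta\,\ell_Y(t)$; since $t\mapsto t$ is strictly increasing and $\ell_Y$ is continuous and nondecreasing, $V$ is a continuous strictly increasing bijection of $\RR_+$, hence $\tau:=V^{-1}$ is continuous, strictly increasing and onto. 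Put $X_t:=Y_{\tau(t)}$ and $\ell_X(t):=\ell_Y(\tau(t))$. Because $(L,D(L))$ is well posed (from \cref{section:preliminaries}), it is enough to establish \ref{item:characterization}--\ref{item:Ito} for this realization.

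For \ref{item:characterization} I would check that $X$ is Markov — the classical time-change theorem applies since $\tau$ is the inverse of a continuous additive functional of $Y$ — and identify its generator through the martingale problem. For $f\in D(L)$ one has $\eta Lf(\sfv)=\sum_i\rho_i f_i'(0)$ and $Lf\in C_0(\Gamma)$; the It\^o formula of \cite{FreidlinSheu} applied to $f$ shows that $f(Y_u)-f(Y_0)-\int_0^u Lf(Y_r)\mathbf 1_{\{Y_r\ne\sfv\}}\,dr-\big(\sum_i\rho_i f_i'(0)\big)\ell_Y(u)$ is a martingale, and this can be rewritten as $f(Y_u)-f(Y_0)-\int_0^u Lf(Y_r)\,dV(r)$, using $dV(r)=dr+\eta\,d\ell_Y(r)$, the fact that $\int_0^u Lf(Y_r)\mathbf 1_{\{Y_r=\sfv\}}\,dr=0$ (nonstickiness of $Y$), and that $d\ell_Y$ is carried by $\{Y=\sfv\}$ where $Lf=Lf(\sfv)=\tfrac1\eta\sum_i\rho_i f_i'(0)$. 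Evaluating at $u=\tau(t)$ and substituting $r=\tau(s)$ (so $V(\tau(s))=s$, $dV(\tau(s))=ds$) turns this into $f(X_t)-f(X_0)-\int_0^t Lf(X_s)\,ds$, an $(\mathcal F^X_t)$-martingale with $\mathcal F^X_t=\mathcal F^Y_{\tau(t)}$, so well-posedness identifies $X$ with the sticky diffusion. Next, $\int_0^t\mathbf 1_{\{X_s=\sfv\}}\,ds=\int_0^{\tau(t)}\mathbf 1_{\{Y_r=\sfv\}}\,dV(r)=\eta\,\ell_Y(\tau(t))=\eta\,\ell_X(t)$ (again using nonstickiness of $Y$ and that $d\ell_Y$ lives on $\{Y=\sfv\}$); this shows $\ell_X$ is adapted to the completed natural filtration of $X$ and increases only when $X=\sfv$, and the identity $V(\tau(t))=t$ becomes $V^{-1}(t)=\tau(t)=t-\eta\ell_X(t)$.

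Parts \ref{item:SDE} and \ref{item:Ito} are then obtained by composing the Freidlin--Sheu formulas for $Y$ with $u=\tau(t)$. For \ref{item:SDE}, starting from $dy(u)=\sigma(Y_u)\mathbf 1_{\{Y_u\ne\sfv\}}\,dW^Y_u+b(Y_u)\mathbf 1_{\{Y_u\ne\sfv\}}\,du+d\ell_Y(u)$, I would use that on $\{X_s\ne\sfv\}$ one has $d\tau(s)=ds$ (there $\ell_Y\circ\tau$ is locally constant) while $\mathbf 1_{\{Y_{\tau(s)}\ne\sfv\}}=\mathbf 1_{\{X_s\ne\sfv\}}$, so the drift and local-time terms become $\int_0^t b(X_s)\mathbf 1_{\{X_s\ne\sfv\}}\,ds$ and $\ell_X(t)$; the martingale $M_t:=\int_0^{\tau(t)}\sigma(Y_u)\mathbf 1_{\{Y_u\ne\sfv\}}\,dW^Y_u$ is a continuous $(\mathcal F^X_t)$-martingale with $\langle M\rangle_t=\int_0^t\sigma^2(X_s)\mathbf 1_{\{X_s\ne\sfv\}}\,ds$, so by martingale representation — enlarging the space to add an independent Brownian motion where $\langle M\rangle$ is constant — $M_t=\int_0^t\sigma(X_s)\mathbf 1_{\{X_s\ne\sfv\}}\,dW_s$ for a standard Brownian motion $W$ (whence ``up to an extension of the filtered probability space''), which gives \eqref{sde:sticky_intro}, its second line being \ref{item:characterization}. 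For \ref{item:Ito}, writing $f(t,X_t)=\Phi_{\tau(t)}$ with $\Phi_u:=f(u+\eta\ell_Y(u),Y_u)$, I would apply the time-dependent Freidlin--Sheu It\^o formula to $\Phi$, treating $u$ and $\ell_Y(u)$ as continuous finite-variation coordinates so that the chain rule also contributes $\eta\,\partial_t f\,d\ell_Y(u)$; integrating to $\tau(t)$ and changing variables back by $u=\tau(s)$ makes the $du$-terms acquire the factor $\mathbf 1_{\{X_s\ne\sfv\}}$ and merge into $\int_0^t(\partial_s f+Lf)(s,X_s)\mathbf 1_{\{X_s\ne\sfv\}}\,ds$, makes the martingale term $\int_0^t\sigma(X_s)\partial_x f(s,X_s)\mathbf 1_{\{X_s\ne\sfv\}}\,dW_s$ with the same $W$, and — since $d\ell_X$ is carried by $\{X=\sfv\}$, where $\partial_t f(s,X_s)=\partial_t f(s,\sfv)$ — turns the $d\ell_Y$-terms into $\int_0^t\big(\eta\,\partial_s f(s,\sfv)+\sum_i\rho_i\partial_x f_i(s,0)\big)\,d\ell_X(s)$, which is \eqref{eq:Ito_1_intro}.

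The main obstacle is \ref{item:characterization}: handling the time change where the rate $\mathbf 1_{\{\cdot\ne\sfv\}}$ degenerates, which forces one to show $\tau=V^{-1}$ stays strictly increasing (equivalently that $\{X=\sfv\}$ has empty interior, inherited from the zero set of $Y$ being closed and nowhere dense and hence of Lebesgue measure strictly less than any interval containing it) and, more importantly, to verify that the Wentzell-type vertex condition of $X$ is exactly $\eta Lf(\sfv)=\sum_i\rho_i f_i'(0)$ — this is where the well-posedness of $(L,D(L))$ and the precise martingale-problem description of nonsticky diffusions from \cite{FreidlinSheu} are essential. A secondary technical point, needed in \ref{item:Ito}, is to record a version of the Freidlin--Sheu It\^o formula that is time-dependent and allows an extra finite-variation argument; this should be a routine extension of their proof.
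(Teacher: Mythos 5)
Your strategy is to build a sticky diffusion by time-changing a nonsticky one, i.e., set $X':=Y(V^{-1})$ with $V(u)=u+\eta\ell_Y(u)$, and then argue that well-posedness of $(L,D(L))$ lets you transfer (i)--(iii) from $X'$ to the given $X$. This establishes what the paper calls \cref{tm:time-change} (the \emph{constructive} direction), and your identification of the vertex condition via the martingale problem is a valid alternative to the paper's Dynkin-formula-plus-Taylor argument there. However, the well-posedness appeal does not close part (i): uniqueness in law gives $X\stackrel{d}{=}X'$, but (i) asserts the existence of a nonsticky diffusion $Y$ \emph{on the same probability space as $X$}, a process $\ell_X$ adapted to the completed natural filtration of $X$, and \emph{pathwise indistinguishability} of $X$ and $Y(V^{-1})$. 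None of these transfer across a change of probability space. The paper's \cref{tm:characterization} proves exactly this converse: it defines $R(t)=\int_0^t\mathbf 1_{\{X(s)\neq\sfv\}}ds$, time-changes $X$ to $\tilde Y:=X(R^{-1})$, shows that $\tilde Y$ is nonsticky (Taylor expansion plus \cref{lem:MFPT,lem:occupation,lem:kirchhoff}), and then proves the non-obvious identity $R^{-1}(V^{-1}(t))=t$ $\mathbf P_\sfv$-a.s.\ through a careful comparison of the $L^1$-limits defining $\ell_{\tilde Y}(R(t))$ and $\ell_{\tilde X}(t)$ via \cref{tm:FW_FS}-(ii). That is the heart of (i), and it is missing from your argument.

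Your route to (iii) is also genuinely different from the paper's. You propose to apply a time-dependent Freidlin--Sheu formula to $\Phi_u:=f(u+\eta\ell_Y(u),Y_u)$ and push it through $u=\tau(t)$; the bookkeeping you sketch does produce \eqref{eq:Ito_1_intro}, but it hinges on a version of the Freidlin--Sheu formula allowing a time argument that is itself a continuous finite-variation process, which you flag as ``a routine extension'' but do not supply. The paper instead proves \cref{tm:Ito} from scratch by the upcrossing decomposition $f(t,X(t))-f(0,\sfx)=Q_1^\delta+Q_2^\delta+Q_3^\delta$ built from the stopping times $\theta_n^\delta,\tau_n^\delta$ in \eqref{eq:FS_stopping_times}, using \cref{coro:local_time_approx} (approximation of $d\ell_X$ by upcrossing counts, itself derived from the SDE of part (ii)) to pass to the limit in $Q_1^\delta$ and $Q_2^\delta$ and a weak-law-of-large-numbers argument for the edge-selection term. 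That proof is self-contained and does not require any extension of \cite{FreidlinSheu}'s formula. If you want to keep your time-change route to (iii), you should actually state and prove the extended Freidlin--Sheu lemma; otherwise the argument has a second gap.
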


\begin{proof}
 The different statements are proved in \cref{section:main_results}. More precisely \ref{item:characterization} is established in \cref{tm:characterization}, \ref{item:SDE} follows from \cref{tm:SDE,prop:occupation} and \ref{item:Ito} is proved in \cref{tm:Ito}.
\end{proof}

Notice that one may recover results about sticky and skew sticky diffusions discussed above by considering the spacial cases $N=1$ and $N=2$ respectively. \\

\section{Diffusion processes on $\Gamma$}
\label{section:preliminaries}

We start by summarizing the conclusions from \cite[Theorem 3.1]{FreidlinWentzell} and \cite[Lemma 2.2, Remark 2.5]{FreidlinSheu}.

\begin{tm}\label{tm:FW_FS}
 The unbounded linear operator $L$, defined by \eqref{generator2} with domain given by \eqref{eq:domain}, is the generator of a strongly continuous semigroup on $C_0(\Gamma)$, associated to a conservative Markov process $X$ on $\Gamma$ having continuous paths. Moreover, writing $X(t)= (i(t), x(t))$, where $x(t)$ is the diffusive part on $[0,\infty)$ generated by $L_i$ and $i(t)$ selects the edges and denoting by $(\cF_t)_{t \geq 0}$ the natural filtration associated to $X$, we have the following:
\begin{enumerate}[label={\textnormal{(\roman*)}}]
 \item \label{item:FW} Let $t \geq 0$, and $\tau := \inf \{ s > t : x(s) = 0 \}$. Then $x$ coincides with a one dimensional diffusion on $[0,\infty)$, with generator $L_{i(t)}$, on $[t,\tau)$.
 
 \item \label{item:FS} If $\eta = 0$, then we have
 \[
  \int_0^t \mathbf{1}_{\{x(s) = 0 \}}\, ds = 0 \quad \textnormal{for all } t \geq 0,
 \]
 and there exists a one-dimensional Brownian motion $W$ and a continuous increasing process $\ell_X$, both adapted to $(\cF_t)_{t \geq 0}$, such that 
 \begin{equation}\label{eq:freidlin-sheu-sde}
  dx(t) = \sigma_{i(t)}(x(t)) dW(t) + b_{i(t)}(x(t)) dt + d\ell_X(t),
 \end{equation}
 and the process $\ell_X$ increases only when $x(t) = 0$. Furthermore, we have
 \[
   \left ( \sum_{i=1}^N \frac{2 \rho_i}{\sigma_i^2(0)} \right) \ell_X(t) = \lim_{\delta \to 0} \frac{1}{\delta} \int_0^t \mathbf 1_{\{x(s) \leq \delta \}} \, ds
 \]
 in expectation\footnote{\textit{i.e.}, in $L^1$.}.
\end{enumerate}

\end{tm}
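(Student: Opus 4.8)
The plan is to obtain each assertion by placing the operator $(L,D(L))$ inside the framework of \cite{FreidlinWentzell,FreidlinSheu} and matching normalizations; no new probabilistic input is needed. For the generation statement, I would check that $(L,D(L))$, with $L$ given by \eqref{generator2} and $D(L)$ by \eqref{eq:domain}, is a particular case of the operators treated in \cite[Theorem 3.1]{FreidlinWentzell}. Indeed, assumption \eqref{eq_diffusion_positive} together with the piecewise continuity of $\sigma,b$ makes each $L_i$ from \eqref{generator} a regular nondegenerate one-dimensional diffusion generator on the edge $[0,\infty)$, and the vertex condition \eqref{kirchoff} is a Wentzell-type gluing condition with stickiness coefficient $\eta\geq 0$ in front of $Lf(\sfv)$, positive flux weights $\rho_i$ on the inward derivatives satisfying $\sum_i\rho_i=1$, and no killing or jump term. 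Since $\sum_i\rho_i=1$ and there is no killing, \cite[Theorem 3.1]{FreidlinWentzell} produces a conservative Feller process $X$ on $\Gamma$ with continuous paths whose generator is exactly $(L,D(L))$; being Feller, $X$ is strong Markov, which gives a well-behaved natural filtration $(\cF_t)_{t\geq 0}$ and the first sentence of the theorem.

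For \ref{item:FW}, I would argue by localization and the strong Markov property. Fix $t\geq 0$ and set $\tau=\inf\{s>t:x(s)=0\}$. By continuity of paths and the structure of $\Gamma$, on the stochastic interval $[t,\tau)$ the process remains on the single edge $i(t)$, strictly away from the vertex. Testing the semigroup against functions $f\in C_0(\Gamma)\cap C^2(\Gamma)$ vanishing in a neighbourhood of $\sfv$ — for which the constraints in $D(L)$ are vacuous and $L$ acts as $L_{i(t)}$ — shows that $f(X(s\wedge\tau))-\int_t^{s\wedge\tau}L f(X(r))\,dr$ is an $(\cF_s)$-martingale, so the stopped radial process solves the martingale problem for $L_{i(t)}$ on $[t,\tau)$. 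Well-posedness of that one-dimensional martingale problem, again thanks to \eqref{eq_diffusion_positive}, identifies $x$ on $[t,\tau)$ with the diffusion generated by $L_{i(t)}$. This is precisely \cite[Lemma 2.2]{FreidlinSheu}, so in the write-up I would invoke it after recording that our hypotheses match theirs.

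For \ref{item:FS}, I would specialize to $\eta=0$, in which case \eqref{kirchoff} reduces to the pure Kirchhoff condition $\sum_i\rho_i f_i'(0)=0$. The vanishing occupation time $\int_0^t\mathbf 1_{\{x(s)=0\}}\,ds=0$, the semimartingale decomposition \eqref{eq:freidlin-sheu-sde} with a one-dimensional Brownian motion $W$ and a continuous increasing process $\ell_X$ supported on $\{x=0\}$, and the occupation-density identity for $\ell_X$ are exactly \cite[Lemma 2.2 and Remark 2.5]{FreidlinSheu}. Here $W$ is assembled by pasting together the edge-wise Brownian motions driving $x$ on the successive excursions away from $\sfv$ provided by \ref{item:FW}, $\ell_X$ is the vertex local time whose existence is part of the Freidlin--Sheu analysis, and the constant $\sum_i 2\rho_i/\sigma_i^2(0)$ comes from an It\^o--Tanaka computation of the semimartingale local time of $x$ at $0$: the weights $\rho_i$ distribute the push among the edges while the factor $2/\sigma_i^2(0)$ converts semimartingale local time into occupation density at the scale $\sigma_i(0)$ of edge $i$ near the vertex.

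The only genuine obstacle is bookkeeping. The cited references use their own normalization of the Wentzell boundary condition and may assume slightly more regularity (e.g. continuous rather than merely piecewise-continuous coefficients), so the substantive check is that \eqref{eq_diffusion_positive} with piecewise-continuous $\sigma,b$ still yields regular diffusion generators on each edge and a well-posed martingale problem, and that the coefficients of \eqref{kirchoff} translate into those of \cite{FreidlinWentzell,FreidlinSheu} so that the normalizing constant $\sum_i 2\rho_i/\sigma_i^2(0)$ comes out as stated. Beyond this translation there is nothing new to prove.
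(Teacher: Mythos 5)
Your proposal matches the paper's treatment: the paper offers no proof of this theorem, presenting it explicitly as a summary of \cite[Theorem 3.1]{FreidlinWentzell} and \cite[Lemma 2.2, Remark 2.5]{FreidlinSheu}, which is exactly the reduction you carry out (your additional sketches of the martingale-problem localization and the local-time normalization are consistent glosses on those references rather than a different route).
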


To complete the notation, we denote by $\mathbf{E}_\mathsf{x}$ the expected value with respect to $\mathbf{P}_\mathsf{x}$, where $\mathsf{x}$ is the starting point. We continue with a simple observation. First, we recall, \cite[Proposition 2.19, Chapter III]{revuz-yor}, that \(\mathsf{x} \in \Gamma\) is a trap point for the process \(X\) if
\[
\mathbf{P}_\mathsf{x}(\xi_\mathsf{x}>t)=1,
\]
for each \(t>0\) and \(\xi_\mathsf{x}:=\inf\{s >0 : X(s) \ne \mathsf{x}\}.\)
\begin{prop}\label{prop:trap}
 The vertex $\sfv$ is not a trap for the Makov process generated by \eqref{generator2} and \eqref{eq:domain}, for a finite $\eta \geq 0$.
\end{prop}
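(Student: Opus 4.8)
The plan is to argue by contradiction: assuming $\sfv$ is a trap, I would first deduce the rigid consequence that $Lf(\sfv)=0$ for \emph{every} $f\in D(L)$, and then contradict this by producing a single explicit element of $D(L)$ whose generator does not vanish at $\sfv$.

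The first step is soft. If $\sfv$ is a trap then, since $\{\xi_\sfv>t\}$ decreases in $t$, letting $t\to\infty$ along integers gives $\mathbf P_\sfv(\xi_\sfv=+\infty)=1$; as $X(0)=\sfv$ $\mathbf P_\sfv$-a.s., this forces $X(s)=\sfv$ for all $s\ge 0$, $\mathbf P_\sfv$-a.s. Writing $(P_t)_{t\ge 0}$ for the strongly continuous semigroup of \cref{tm:FW_FS} (so $P_tf(\sfx)=\mathbf E_\sfx[f(X(t))]$), we then get $P_tf(\sfv)=f(\sfv)$ for all $f\in C_0(\Gamma)$ and $t\ge 0$; hence, for $f\in D(L)$, strong continuity yields $Lf(\sfv)=\lim_{t\to 0^+}\tfrac1t\bigl(P_tf(\sfv)-f(\sfv)\bigr)=0$.

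For the second step I would build $f^\star\in D(L)$ with $Lf^\star(\sfv)=1$ by prescribing its jet at the vertex. Fixing a cutoff $\phi\in C^\infty([0,\infty))$ with $\phi\equiv 1$ on $[0,1]$ and $\phi\equiv 0$ on $[2,\infty)$, set on each edge
\[
 f^\star_i(x):=\phi(x)\Bigl(\eta\,x+\frac{1-\eta\,b_i(0)}{\sigma_i^2(0)}\,x^2\Bigr),
\]
which makes sense thanks to the nondegeneracy assumption \eqref{eq_diffusion_positive}. Each $f^\star_i$ is smooth with compact support and $f^\star_i(0)=0$, so $f^\star:=\bigoplus_i f^\star_i\in C^2(\Gamma)\cap C_0(\Gamma)$; one computes $(f^\star_i)'(0)=\eta$ and $(f^\star_i)''(0)=2(1-\eta b_i(0))/\sigma_i^2(0)$, the latter chosen precisely so that $L_if^\star_i(0)=\tfrac12\sigma_i^2(0)(f^\star_i)''(0)+b_i(0)\eta=1$ for every $i$. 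Thus $Lf^\star$ is continuous at $\sfv$ with value $1$, has compact support so lies in $C_0(\Gamma)$, and $\sum_i\rho_i(f^\star_i)'(0)=\eta=\eta\,Lf^\star(\sfv)$, so the transmission condition \eqref{kirchoff} holds and $f^\star\in D(L)$. Since $Lf^\star(\sfv)=1\neq 0$, this contradicts the first step — which is valid for every finite $\eta\ge 0$, in particular $\eta=0$ — and the proposition follows.

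I do not anticipate a genuine obstacle; the only thing requiring care is the bookkeeping in the second step, namely arranging that the \emph{same} function $f^\star$ lies in $C^2(\Gamma)\cap C_0(\Gamma)$, has $Lf^\star$ continuous at the vertex (this is why the second derivatives $(f^\star_i)''(0)$ must be tuned edge by edge so that $L_if^\star_i(0)$ is common) and decaying at infinity, and simultaneously satisfies the Kirchhoff-type condition \eqref{kirchoff} (this is why $(f^\star_i)'(0)$ is set equal to $\eta$ on every edge).
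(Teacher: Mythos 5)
Your proof is correct and shares the paper's overall strategy: argue by contradiction, deduce that $Lf(\sfv)=0$ for every $f \in D(L)$, and refute this by exhibiting some $u\in D(L)$ with $Lu(\sfv)=1$. Two technical choices differ. First, to obtain $Lf(\sfv)=0$ the paper applies Kolmogorov's forward equation $\tfrac{d}{dt}P_tu(\sfv)=P_t(Lu)(\sfv)$ at $t=0$, whereas you use the defining strong-continuity limit of the generator at $\sfv$; the two are equivalent, yours slightly more elementary. Second, and more substantively, the paper produces the required test function by extending the solution of the boundary value problem in \cref{prop:bvp} with right-hand side $1$ (the same device used in \cref{lem:MFPT,lem:occupation}), whereas you build a compactly supported function by hand, tuning $(f^\star_i)'(0)=\eta$ on every edge so that the Kirchhoff condition holds and tuning $(f^\star_i)''(0)=2(1-\eta b_i(0))/\sigma_i^2(0)$ so that $L_if^\star_i(0)=1$ uniformly in $i$, which makes $Lf^\star$ continuous at $\sfv$. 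Your construction is self-contained and avoids any reliance on Appendix~\ref{section:BVP}, while the paper's version reuses machinery already needed elsewhere; both are valid, and your explicit formula is arguably the cleaner argument in isolation.
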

\begin{proof}
 Arguing by contradiction, assume that $\sfv$ is a trap for the Markov process $X$ generated by \eqref{generator2}-\eqref{eq:domain}. Denote by $(P_t)_{t \geq 0}$ the strongly continuous semigroup of bounded linear operators on $C_0(\Gamma)$ associated to $X$. Then $P_t g(\sfv) = \mathbf E_\sfv[g(X(t))] = g(\sfv)$ for all $g \in C_0(\Gamma)$ and $t \geq 0$. Let $u \in D(L)$ be such that $Lu(\sfv) = 1$, such a function may be obtained by extending the one obtain in \cref{prop:bvp} with $f = 1$. Then, using Kolmogorov's forward equation, we have
 \[
  0 = \frac{d}{dt} P_t u(\sfv) = P_t (L u)(\sfv) = 1,
 \]
 a contradiction.
\end{proof}

The following lemma provides an asymptotic estimate on the mean first passage time of the process started at $\sfv$ at the boundary of the ball $B(\sfv,\delta)$ as $\delta$ tends to zero.
\begin{lemma}\label{lem:MFPT}
 Let $X(t) = (i(t),x(t))$ be a Markov process with generator given by \eqref{generator2} and \eqref{eq:domain} for $\eta \geq 0$ and, for every $\delta > 0$, set
 \begin{equation}\label{eq:MFPT}
  T_X^\delta:=\inf\{t \geq 0 : x(t) \geq \delta\}.
 \end{equation}
 Then $\left \lvert \mathbf E_{\mathsf v}[T_X^\delta] -\eta \delta \right \rvert = O(\delta^2)$ when $\delta \downarrow 0$.

\end{lemma}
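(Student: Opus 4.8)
The plan is to reduce the computation of $\mathbf E_\sfv[T_X^\delta]$ to a boundary value problem on the ball $B(\sfv,\delta)$ and then expand its solution as $\delta \downarrow 0$. First I would produce a function $u = \bigoplus_{i=1}^N u_i$ on $\overline{B(\sfv,\delta)}$ solving $\frac12 \sigma_i^2(x) u_i''(x) + b_i(x) u_i'(x) = -1$ on $(0,\delta)$ for each $i$, with $u_i(\delta) = 0$, with $u_i(0) = u_j(0)$ for all $i,j$, and with the vertex condition $\eta L u(\sfv) = \sum_{i=1}^N \rho_i u_i'(0)$; by \eqref{eq:continuity_generator} one has $Lu(\sfv) = -1$, so this last condition reads $\sum_{i=1}^N \rho_i u_i'(0) = -\eta$. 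The existence of such a $u$, together with an extension of it to an element of $D(L)$, is exactly what \cref{prop:bvp} provides (applied with $f \equiv 1$), so I would invoke it directly.

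Second, I would identify $\mathbf E_\sfv[T_X^\delta]$ with $u(\sfv)$ by optional stopping. Applying Dynkin's formula to the extended function $u \in D(L)$ and the stopping time $T_X^\delta \wedge t$, and using that $X(s) \in B(\sfv,\delta)$ (where $Lu = -1$) for all $s < T_X^\delta$, one gets $\mathbf E_\sfv[T_X^\delta \wedge t] = u(\sfv) - \mathbf E_\sfv[u(X(T_X^\delta \wedge t))] \le u(\sfv) + \|u\|_{L^\infty(\overline{B(\sfv,\delta)})}$, the last bound because $u$ is continuous on the compact set $\overline{B(\sfv,\delta)}$. Monotone convergence as $t \to \infty$ then gives $\mathbf E_\sfv[T_X^\delta] < \infty$, in particular $T_X^\delta < \infty$ almost surely (consistent with $\sfv$ not being a trap, see \cref{prop:trap}). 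Path-continuity forces $x(T_X^\delta) = \delta$, hence $u(X(T_X^\delta)) = 0$, and dominated convergence yields $\mathbf E_\sfv[T_X^\delta] = u(\sfv)$.

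Third, I would compute $u(\sfv)$ in closed form and expand it. Writing $s_i'(x) = \exp\!\big(-\int_0^x 2 b_i(z)/\sigma_i^2(z)\, dz\big)$ for the scale density on edge $i$, solving the linear ODE gives $u_i'(x) = s_i'(x)\big(u_i'(0) - \int_0^x 2/(\sigma_i^2(y) s_i'(y))\, dy\big)$; integrating once more and imposing $u_i(\delta) = 0$ expresses $u_i'(0)$ through $c := u(\sfv)$ and the quantities $S_i(\delta) := \int_0^\delta s_i'(t)\, dt$ and $R_i(\delta) := \int_0^\delta s_i'(t) \int_0^t 2/(\sigma_i^2(y) s_i'(y))\, dy\, dt$. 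Substituting into $\sum_{i=1}^N \rho_i u_i'(0) = -\eta$ and solving the resulting linear equation for $c$ gives $u(\sfv) = \big(\eta + \sum_{i=1}^N \rho_i R_i(\delta)/S_i(\delta)\big) \big/ \sum_{i=1}^N \rho_i/S_i(\delta)$. Since $\sigma_i$ and $b_i$ are continuous at $0$ with $\sigma_i(0) \ge \sigma_0 > 0$, one has $s_i'(t) = 1 + O(t)$, hence $S_i(\delta) = \delta + O(\delta^2)$ and $R_i(\delta) = \delta^2/\sigma_i^2(0) + O(\delta^3)$, so that $R_i(\delta)/S_i(\delta) = \delta/\sigma_i^2(0) + O(\delta^2)$ and $1/S_i(\delta) = 1/\delta + O(1)$. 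Plugging these in and using $\sum_{i=1}^N \rho_i = 1$, the numerator becomes $\eta + \delta \sum_{i=1}^N \rho_i/\sigma_i^2(0) + O(\delta^2)$ and the denominator becomes $1/\delta + O(1)$, whence $u(\sfv) = \eta \delta + O(\delta^2)$, which is the claim.

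The steps requiring care are not the asymptotic expansion, which is elementary once the closed form for $u(\sfv)$ is available, but rather (a) ensuring the Dynkin/optional-stopping argument is legitimate for a function genuinely defined only near the vertex — this is why the extension to $D(L)$ furnished by \cref{prop:bvp} and the a priori finiteness of $\mathbf E_\sfv[T_X^\delta]$ are needed — and (b) checking that all the $O(\cdot)$ bounds are uniform over the edges, which is immediate since there are finitely many edges and $\sigma, b \in PC(\Gamma)$. The case $\eta = 0$ needs no separate treatment: the same closed form then gives $u(\sfv) = \delta^2 \sum_{i=1}^N \rho_i/\sigma_i^2(0) + O(\delta^3) = O(\delta^2)$.
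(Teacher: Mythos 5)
Your proposal is correct, and it follows the same overall framework as the paper (reduce $\mathbf E_\sfv[T_X^\delta]$ to the value $u(\sfv)$ of the solution to a boundary value problem on $B(\sfv,\delta)$ via Dynkin's formula, then expand as $\delta \downarrow 0$), but the final expansion step is carried out differently. The paper expresses $u(\sfv) = \sum_i \rho_i\bigl(u_i(0) - u_i(\delta)\bigr)$, applies Taylor's theorem with Lagrange remainder, uses the vertex condition $\sum_i \rho_i u_i'(0) = -\eta$ to isolate the leading term $\eta\delta$, and controls the remainder by the \emph{a priori} estimate $\sup_{B(\sfv,\delta)} |u''| = O(1)$ proved in \cref{prop:bvp}; you instead write $u(\sfv)$ in closed form via the scale density $s_i'$ and the quantities $S_i(\delta)$, $R_i(\delta)$, and then expand the explicit expression. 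Both routes are legitimate; yours has the advantage of not invoking the $O(1)$ bound on $u''$, but at the cost of essentially re-deriving the explicit formula that \cref{prop:bvp} already supplies. Your justification of Dynkin's formula via $T_X^\delta\wedge t$, monotone convergence, and path continuity is actually a bit more scrupulous than the paper's. One small point of care: you claim $R_i(\delta) = \delta^2/\sigma_i^2(0) + O(\delta^3)$, but under the standing hypotheses ($b_i,\sigma_i$ merely continuous and bounded on each edge) the inner integrand $2/(\sigma_i^2 s_i')$ has only a $o(1)$ deviation from $2/\sigma_i^2(0)$ near $0$, so one only gets $R_i(\delta) = \delta^2/\sigma_i^2(0) + o(\delta^2)$ and hence $R_i(\delta)/S_i(\delta) = \delta/\sigma_i^2(0) + o(\delta)$; this weaker bound still suffices, since the numerator contributes $\eta + O(\delta)$, the denominator contributes $\delta(1+O(\delta))$, and their product is $\eta\delta + O(\delta^2)$. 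Also a trivial sign slip: you say ``applied with $f\equiv 1$'' but the ODE you wrote has right-hand side $-1$, matching the paper's choice $f_i \equiv -1$ in \cref{prop:bvp}; this does not affect anything.
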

\begin{proof}
	For each $\delta > 0$ and $1 \leq i \leq N$, consider $u^\delta \in D(L)$ such that the restriction of $u^\delta$ to $B(\mathsf v, \delta)$ satisfies the system of differential equations
	\begin{equation}
		\begin{cases}
		 \frac{1}{2} \sigma^2_i(x) {u^\delta_i}''(x) + b_i(x){u^\delta_i}'(x) = -1 \quad & \textnormal{for all } x \in (0, \delta), \, 1 \leq i \leq N, \\
		 \eta Lu(\sfv) - \sum_{i=1}^N \rho_i {u^\delta_i}'(0) = 0, \\
		  u^\delta_i(\delta) = 0 \quad & \textnormal{for all } i \in \{1,\dots, N \},
		\end{cases}
	\end{equation}
	and 
	\begin{equation}\label{eq:mfpt_estimate}
	 \sup_{0 < d(\sfx, \sfv) < \delta} \left \lvert {u^\delta}''(\sfx) \right \rvert = O(1) \quad \textnormal{as } \delta \downarrow 0.
	\end{equation}
	Such a function can be obtained by extending the function obtained in the postponed \cref{prop:bvp}, with $f_i(x) = -1$ for all $0 \leq x < \delta$.
	Then, using Dynkin's formula, we have
	\begin{align*}
	u_i^\delta(0) & = \mathbf E_{\sfv} \left [u^\delta(0) - u^\delta(X(T_X^\delta)) \right ] = \mathbf E_{\sfv} \left[ - \int_0^{T_X^\delta} Lu^\delta(X(s)) \, ds \right ] = \mathbf E_{\sfv} \left[ T_X^\delta \right].
	\end{align*}
	On the other hand, using Taylor's formula, there exists $(\theta_1, \dots, \theta_N) \in (0,\delta)^N$ such that
	\begin{align*}
	 u_i^\delta(0) = \sum_{i=1}^N \rho_i \left (u^\delta_i(0) - u^\delta_i(\delta) \right) & = - \sum_{i=1}^N \rho_i \left ({u^\delta_i}'(0) \delta + {u^\delta_i}''(\theta_i) \frac{\delta^2}{2} \right) = \eta \delta - \sum_{i=1}^N \rho_i {u^\delta_i}''(\theta_i) \frac{\delta^2}{2}.
	\end{align*}
	We conclude that
	\[
	 \left \lvert \mathbf E_{\mathsf v}[T_X^\delta] -\eta \delta \right \rvert \leq \sup_{0 < d(\sfx, \sfv) < \delta} \left \lvert {u^\delta}''(\sfx) \right \rvert \delta^2.
	\]
	The conclusion then follows from \eqref{eq:mfpt_estimate}.
\end{proof}

The following result was established in \cite[Corollary 2.4]{FreidlinSheu} in the case $\eta = 0$. We provide here a generalization of this result the case to $\eta \geq 0$ with a slightly different proof.

\begin{lemma}\label{lem:kirchhoff}
 Let $X(t) = (i(t),x(t))$ be a Markov process with generator given by \eqref{generator2} and \eqref{eq:domain} for $\eta \geq 0$ and, for all $\delta > 0$, define $T_X^\delta$ according to \eqref{eq:MFPT}.
 Then 
 \[
  \rho_j = \lim_{\delta \downarrow 0} \mathbf{P}_\mathsf{v} \left ( i(T_X^\delta) = j \right) \quad \textnormal{for all } 1 \leq j \leq N,
 \]
where $(\rho_j)_{1 \leq j \leq N}$ are the constants appearing in \eqref{eq:domain}.
\end{lemma}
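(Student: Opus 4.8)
The plan is to use a test function argument combined with Dynkin's formula, in the same spirit as the proof of \cref{lem:MFPT}, but now choosing the boundary data on $\partial B(\sfv,\delta)$ to single out the edge $j$. Fix $j \in \{1,\dots,N\}$. For each $\delta > 0$, I would let $u^\delta \in D(L)$ be a function whose restriction to $\overline{B(\sfv,\delta)}$ solves the system
\begin{equation*}
\begin{cases}
\frac{1}{2}\sigma_i^2(x){u^\delta_i}''(x) + b_i(x){u^\delta_i}'(x) = 0 & \textnormal{for all } x \in (0,\delta),\ 1 \leq i \leq N, \\
\eta L u^\delta(\sfv) - \sum_{i=1}^N \rho_i {u^\delta_i}'(0) = 0, \\
u^\delta_i(\delta) = \delta_{ij} & \textnormal{for all } i \in \{1,\dots,N\},
\end{cases}
\end{equation*}
together with the uniform second-derivative bound $\sup_{0<d(\sfx,\sfv)<\delta}|{u^\delta}''(\sfx)| = O(1)$ as $\delta \downarrow 0$; existence of such a function follows from \cref{prop:bvp} as in the previous lemma, with right-hand side $f_i \equiv 0$ and the prescribed boundary values. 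Since $u^\delta$ is $L$-harmonic on $B(\sfv,\delta)$, Dynkin's formula applied at the stopping time $T_X^\delta$ gives
\[
u^\delta_i(0) = \mathbf E_\sfv\left[u^\delta(X(T_X^\delta))\right] = \sum_{i=1}^N \mathbf P_\sfv\left(i(T_X^\delta)=i\right) u^\delta_i(\delta) = \mathbf P_\sfv\left(i(T_X^\delta)=j\right),
\]
using that $x(T_X^\delta) = \delta$ on the event $\{T_X^\delta < \infty\}$ (and $T_X^\delta < \infty$ a.s.\ since $\sfv$ is not a trap by \cref{prop:trap} and the process leaves any ball in finite time). It therefore remains to show $u^\delta_i(0) \to \rho_j$ as $\delta \downarrow 0$.

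For this last step I would Taylor-expand $u^\delta_i$ around $0$: there is $\theta_i \in (0,\delta)$ with $u^\delta_i(\delta) = u^\delta_i(0) + {u^\delta_i}'(0)\delta + \tfrac{1}{2}{u^\delta_i}''(\theta_i)\delta^2$, i.e.
\[
\delta_{ij} = u^\delta_i(0) + {u^\delta_i}'(0)\delta + \tfrac{1}{2}{u^\delta_i}''(\theta_i)\delta^2.
\]
Multiplying by $\rho_i$, summing over $i$, using $\sum_i \rho_i = 1$ and $u^\delta_i(0)$ independent of $i$ (it equals $u^\delta(\sfv)$), I get
\[
\rho_j = u^\delta(\sfv) + \delta\sum_{i=1}^N \rho_i {u^\delta_i}'(0) + \tfrac{\delta^2}{2}\sum_{i=1}^N \rho_i {u^\delta_i}''(\theta_i).
\]
The Kirchhoff condition gives $\sum_i \rho_i {u^\delta_i}'(0) = \eta L u^\delta(\sfv)$, and since $u^\delta$ is $L$-harmonic on the ball, $L u^\delta(\sfv) = 0$ (the generator is continuous at $\sfv$ by the membership $L u^\delta \in C_0(\Gamma)$, and equals the common boundary limit of $L_i u^\delta$, which is $0$). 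Hence the middle term vanishes, and the uniform bound \eqref{eq:mfpt_estimate}-type estimate $|{u^\delta}''| = O(1)$ forces the last term to be $O(\delta^2)$. Therefore $u^\delta(\sfv) = \rho_j + O(\delta^2)$, and combining with $u^\delta(\sfv) = u^\delta_i(0) = \mathbf P_\sfv(i(T_X^\delta)=j)$ yields $\mathbf P_\sfv(i(T_X^\delta)=j) \to \rho_j$, as claimed.

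The main obstacle I anticipate is the construction of the test function $u^\delta$ with the required uniform control on ${u^\delta}''$ — one needs \cref{prop:bvp} to provide a solution on the ball with second derivative bounded uniformly in $\delta$, and one must check the $L$-harmonicity is compatible with extending $u^\delta$ to a genuine element of $D(L)$ on all of $\Gamma$ (i.e.\ matching $C^2$ and $C_0$ regularity and the global Kirchhoff/continuity conditions outside the ball). This is a routine patching argument but is the only place where something could go wrong; everything else is elementary Taylor expansion and Dynkin's formula. A minor point to address is that $T_X^\delta$ may a priori not have finite expectation, but \cref{lem:MFPT} already gives $\mathbf E_\sfv[T_X^\delta] = \eta\delta + O(\delta^2) < \infty$, so Dynkin's formula applies without difficulty.
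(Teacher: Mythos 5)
Your approach is genuinely different from the paper's. The paper fixes an arbitrary $f \in D(L)$, combines Taylor's expansion with Dynkin's formula and the already-established estimate $\mathbf E_\sfv[T_X^\delta] = \eta\delta + O(\delta^2)$, extracts a subsequential limit of $(\tilde\rho_j^\delta)_j$, and identifies it with $(\rho_j)_j$ by matching the resulting identity $\eta Lf(\sfv)=\sum_j\tilde\rho_j f_j'(0)$ against the junction condition for all $f$. You instead construct, for each target edge $j$, a specific $L$-harmonic barrier with boundary data $u^\delta_i(\delta)=\delta_{ij}$, so that Dynkin immediately gives $u^\delta(\sfv)=\mathbf P_\sfv(i(T_X^\delta)=j)$; the limit is then read off from a Taylor expansion with no subsequence extraction. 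This is a clean and appealing idea.

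There is, however, a concrete error in your estimates. You assert $\sup_{0<d(\sfx,\sfv)<\delta}|{u^\delta}''|=O(1)$ by appeal to \cref{prop:bvp}. That proposition requires the Dirichlet data $u_i(\delta)=0$, which you do not have; your boundary data $\delta_{ij}$ varies across edges. With that data the first derivatives $A_i={u^\delta_i}'(0)$ are necessarily of order $1/\delta$ (crossing from the common value $u^\delta(\sfv)\approx\rho_j$ to $\delta_{ij}$ over an interval of length $\delta$), and since the $L_i$-harmonic equation gives ${u_i^\delta}''=-\tfrac{2b_i}{\sigma_i^2}{u_i^\delta}'$, the second derivatives are also $O(1/\delta)$, not $O(1)$. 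Consequently your claimed rate $u^\delta(\sfv)=\rho_j+O(\delta^2)$ is too strong: the remainder $\tfrac{\delta^2}{2}\sum_i\rho_i{u_i^\delta}''(\theta_i)$ is only $O(\delta)$. Fortunately this is still $o(1)$, so the limit $\mathbf P_\sfv(i(T_X^\delta)=j)\to\rho_j$ survives. To make the argument rigorous you should (a) construct $u^\delta$ on $[0,\delta]$ directly by solving the homogeneous ODE with the inhomogeneous Dirichlet data and the Kirchhoff condition (a short explicit calculation, rather than a citation of \cref{prop:bvp}), (b) prove the honest bound $|{u^\delta}''|=O(1/\delta)$, and (c) conclude with the weaker rate $O(\delta)$, which suffices. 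Once corrected, your proof is shorter at the bookkeeping level than the paper's subsequence argument, at the cost of having to build and estimate the barrier $u^\delta$ by hand rather than reusing \cref{prop:bvp}.
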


\begin{proof}
 Let $f \in D(L)$ and set $\tilde \rho^\delta_j := \mathbf{P}_\mathsf{v} \left ( i(T_X^\delta) = j \right)$ for all $1 \leq j \leq N$ and $\delta > 0$. 
 Using Taylor's formula, we have
	\begin{equation}\label{eq:kirchhoff_1}
	 \mathbf{E}_\mathsf{v} \left [f(X(T_X^\delta)) - f(\mathsf{v}) \right ] = \sum_{j=1}^N \tilde \rho^\delta_j \left (f_j(\delta) - f_j(0) \right) = \sum_{j=1}^N \tilde \rho_j^\delta \left (f_j'(0) \delta + o(\delta) \right).
	\end{equation}
 On the other hand, from Dynkin's formula, we also have 
 \begin{align*}
 &\mathbf{E}_\mathsf{v} \left [\delta^{-1} \left (f(X(T_X^\delta)) - f(\mathsf{v}) \right ) \right ]  = \mathbf{E}_\mathsf{v} \left [ \frac{1}{\delta} \int_0^{T_X^\delta} L f(X(s)) \, ds \right ] \\
 & \qquad = \frac{\mathbf E_{\sfv} \left [T_X^\delta \right ]}{\delta} \left ( Lf(\sfv) + \mathbf{E}_\mathsf{v} \left [ \frac{1}{\mathbf E_{\sfv} \left [T_X^\delta \right ]}\int_{0}^{T_X^\delta} \left( L f(X(s)) - Lf(\sfv) \right) \, ds \right ] \right ),
\end{align*}
and notice that 
\begin{equation}\label{eq:vanishing_integal}
 \left \lvert \mathbf{E}_\mathsf{v} \left [ \frac{1}{\mathbf E_{\sfv} \left [T_X^\delta \right ]}\int_{0}^{T_X^\delta} \left( L f(X(s)) - Lf(\sfv) \right) \, ds \right ] \right \rvert \leq \sup_{\sfx , \mathsf y \in B(\sfv, \delta)} \left \lvert Lf(\sfx) - Lf(\mathsf y) \right \rvert \xrightarrow{\delta \to 0} 0.
\end{equation}
Let $(\delta_n)_{n \in \NN}$ be any sequence in $(0,\infty)$ converging to $0$ and such that $\tilde \rho_j^{\delta_n}$ converges to some $\tilde \rho_j$ for each $j$. Then, passing to the limit $n \to \infty$ and using the fact that $\left \lvert \mathbf E_{\mathsf v}[T_X^\delta] -\eta \delta \right \rvert = o(\delta)$ (see \cref{lem:MFPT}), we obtain that 
\[
 \eta L f(\sfv) = \sum_{j=1}^N \tilde \rho_j f_j'(0).
\]
Since the latter holds for every $f \in D(L)$, we conclude that $\tilde \rho_j = \rho_j$.
\end{proof}

\begin{lemma}\label{lem:occupation}
 Let $X(t) = (i(t),x(t))$ be a Markov process with generator given by \eqref{generator2} and \eqref{eq:domain} for $\eta \geq 0$ and, for every $\delta > 0$, define the stopping time $T_X^\delta$ by \eqref{eq:MFPT}.
 Then
 \[
  \left \lvert \mathbf E_{\sfv} \left[ \int_0^{T_X^\delta} \mathbf 1_{\{X(s) = \sfv \}} \, ds \right ] -\eta \delta \right \rvert = O(\delta^2) \quad \textnormal{when } \delta \downarrow 0.
 \]
\end{lemma}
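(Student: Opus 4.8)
The plan is to deduce the estimate from \cref{lem:MFPT} by controlling the time spent \emph{away} from the vertex before reaching distance $\delta$. Writing
\[
\int_0^{T_X^\delta}\mathbf 1_{\{X(s)=\sfv\}}\,ds = T_X^\delta - \int_0^{T_X^\delta}\mathbf 1_{\{x(s)>0\}}\,ds,
\]
and recalling from \cref{lem:MFPT} that $\lvert \mathbf E_\sfv[T_X^\delta]-\eta\delta\rvert = O(\delta^2)$, it suffices to show that $\mathbf E_\sfv\big[\int_0^{T_X^\delta}\mathbf 1_{\{x(s)>0\}}\,ds\big] = O(\delta^2)$.

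To this end I would argue as in the proof of \cref{lem:MFPT}, but with a source term that vanishes near the vertex. For $\epsilon \in (0,\delta)$, fix a smooth cutoff $\chi_\epsilon \colon [0,\delta] \to [0,1]$ with $\chi_\epsilon \equiv 0$ on $[0,\epsilon/2]$ and $\chi_\epsilon \equiv 1$ on $[\epsilon,\delta]$, and let $u^{\delta,\epsilon} \in D(L)$ be obtained by extending the function from \cref{prop:bvp} with $f_i = -\chi_\epsilon$; thus on $B(\sfv,\delta)$ one has $L_i u_i^{\delta,\epsilon} = -\chi_\epsilon$ on $(0,\delta)$, the Kirchhoff condition, $u_i^{\delta,\epsilon}(\delta)=0$, together with the second-derivative bound analogous to \eqref{eq:mfpt_estimate}, which I expect to hold \emph{uniformly in $\epsilon$} since $\lVert\chi_\epsilon\rVert_\infty = 1$. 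Because $\chi_\epsilon(0)=0$ we get $Lu^{\delta,\epsilon}(\sfv)=0$, so the Kirchhoff condition reads $\sum_{i=1}^N \rho_i (u_i^{\delta,\epsilon})'(0)=0$. Dynkin's formula at $T_X^\delta$ (which is finite with finite expectation by \cref{prop:trap,lem:MFPT}) then gives, using $u^{\delta,\epsilon}(X(T_X^\delta))=0$ and $Lu^{\delta,\epsilon}(X(s))=-\chi_\epsilon(x(s))$ for $s<T_X^\delta$,
\[
u^{\delta,\epsilon}(\sfv)=\mathbf E_\sfv\Big[\int_0^{T_X^\delta}\chi_\epsilon(x(s))\,ds\Big],
\]
while a Taylor expansion of each $u_i^{\delta,\epsilon}$ at $0$, combined with $u_i^{\delta,\epsilon}(\delta)=0$, the identity $u_i^{\delta,\epsilon}(0)=u^{\delta,\epsilon}(\sfv)$ for all $i$, and $\sum_i \rho_i (u_i^{\delta,\epsilon})'(0)=0$, yields (exactly as in \cref{lem:MFPT}, but now \emph{without} the $\eta\delta$ term)
\[
\lvert u^{\delta,\epsilon}(\sfv)\rvert \le \tfrac{\delta^2}{2}\sup_{0<d(\sfx,\sfv)<\delta}\lvert (u^{\delta,\epsilon})''(\sfx)\rvert = O(\delta^2)\quad\text{uniformly in }\epsilon.
\]

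Letting $\epsilon \downarrow 0$, dominated convergence (with dominating function $\mathbf 1_{[0,T_X^\delta]}$ and $\mathbf E_\sfv[T_X^\delta]<\infty$) gives $\mathbf E_\sfv\big[\int_0^{T_X^\delta}\chi_\epsilon(x(s))\,ds\big] \to \mathbf E_\sfv\big[\int_0^{T_X^\delta}\mathbf 1_{\{x(s)>0\}}\,ds\big]$, so the latter is $O(\delta^2)$, and the claim follows from the triangle inequality together with \cref{lem:MFPT}. The only point genuinely requiring care is the uniformity in $\epsilon$ of the bound on $(u^{\delta,\epsilon})''$: this amounts to checking that the constant furnished by \cref{prop:bvp} depends on the source term only through its supremum norm (and on $\sigma_0$ and on bounds for $\sigma,b$ near $\sfv$), which is already the content of the estimate used to obtain \eqref{eq:mfpt_estimate}; if necessary one simply re-examines the proof of \cref{prop:bvp} with a general bounded right-hand side.
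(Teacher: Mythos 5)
Your proof is correct, and it relies on the same core machinery as the paper's argument (the boundary value problem of \cref{prop:bvp}, Dynkin's formula, a second-order Taylor expansion in $\delta$, and dominated convergence as the cutoff parameter vanishes), but it organizes the estimate via a genuinely different decomposition. The paper approximates $\mathbf 1_{\{\sfv\}}$ directly from above by a sequence $f^n$ with $f^n(0)=1$; the term $\eta\delta$ then emerges inside the BVP computation, since $Lu^n(\sfv)=-1$ makes the Kirchhoff condition contribute $-\eta Lu^n(\sfv)\delta=\eta\delta$ to the Taylor expansion, and the lemma is proved in one self-contained step. You instead write $\mathbf 1_{\{X(s)=\sfv\}}=1-\mathbf 1_{\{x(s)>0\}}$, invoke \cref{lem:MFPT} to produce the $\eta\delta$ term from $\mathbf E_\sfv[T_X^\delta]$, and run the BVP argument with the complementary source $-\chi_\epsilon$; because $\chi_\epsilon(0)=0$ the vertex term in the Kirchhoff condition vanishes, so the Taylor expansion gives a pure $O(\delta^2)$ bound on $\mathbf E_\sfv\big[\int_0^{T_X^\delta}\mathbf 1_{\{x(s)>0\}}\,ds\big]$ rather than producing $\eta\delta$. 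Both approaches need exactly the same uniform-in-cutoff control of $u''$, which, as you correctly observe, is supplied by the $\lVert f\rVert_\infty$-dependence asserted in \cref{prop:bvp}. Your route is marginally less self-contained (it depends on \cref{lem:MFPT}), while the paper's is slightly more direct; otherwise the two arguments are of equal strength, and all the steps you invoke (validity of the source $\chi_\epsilon$, applicability of Dynkin's formula, dominated convergence using $\mathbf E_\sfv[T_X^\delta]<\infty$) are sound.
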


\begin{proof}
	For each $\delta > 0$, consider a sequence $f^n \in \bigoplus_{i=1}^N C([0,\delta])$ satisfying
	\[
	0 \leq f_i^n \leq 1  \quad \textnormal{and} \quad f_i^n(x) = \begin{cases}
	          1 \quad & \textnormal{for } 0 \leq x \leq \frac{\delta}{3n}, \\
	          0 \quad & \textnormal{for } \frac{2\delta}{3n} \leq x \leq \delta,
	         \end{cases}
	\]
	for all $1 \leq i \leq N$, and, as in the proof of \cref{lem:MFPT}, a sequence of functions $u^n \in D(L)$ such that the restriction of $u^n$ to $B(\mathsf v, \delta)$ satisfies
	\begin{equation}
		\begin{cases}
		 \frac{1}{2} \sigma^2_i(x) {u^n_i}''(x) + b_i(x){u^n_i}'(x) = - f_i^n(x) \quad & \textnormal{for all } x \in (0, \delta), \, 1 \leq i \leq N, \\
		 \eta Lu^n(\sfv) - \sum_{i=1}^N \rho_i {u^n_i}'(0) = 0, \\
		  u^n_i(\delta) = 0 \quad & \textnormal{for all } i \in \{1,\dots, N \}.
		\end{cases}
	\end{equation}
	From Dynkin's formula, we have
	\begin{align*}
	u^n(\sfv) & = \mathbf E_{\sfv} \left [u^n(\sfv) - u^n(X(T_X^\delta)) \right ] = \mathbf E_{\sfv} \left[ - \int_0^{T_X^\delta} Lu^n(X(s)) \, ds \right ] \\
	& = \mathbf E_{\sfv} \left[ \int_0^{T_X^\delta} f^n(X(s)) \, ds \right ]
	\end{align*}
	On the other hand, using Taylor's formula, there exists $(\theta_1^n, \dots, \theta_N^n) \in (0,\delta)^N$ such that
	\begin{align*}
	 u^n(\sfv) &= \sum_{i=1}^N \rho_i \left (u^n_i(0) - u^n_i(\delta) \right) = - \sum_{i=1}^N \rho_i \left ({u^n_i}'(0) \delta + {u^n_i}''(\theta_i^n) \frac{\delta^2}{2} \right) \\
	 & = - \eta L u^n(\sfv) \delta - \sum_{i=1}^N \rho_i {u^n_i}''(\theta_i^n) \frac{\delta^2}{2} \\
	 & = \eta \delta - \sum_{i=1}^N \rho_i {u^n_i}''(\theta_i^n) \frac{\delta^2}{2}
	\end{align*}
	We conclude that
	\begin{equation}\label{eq:occupation_1}
	 \left \lvert \mathbf E_{\sfv} \left[ \int_0^{T_X^\delta} f^n(X(s)) \, ds \right ] -\eta \delta \right \rvert \leq \sup_{0 < d(\sfx, \sfv) < \delta} \left \lvert {u^n}''(\sfx) \right \rvert \frac{\delta^2}{2}.
	\end{equation}
	Finally, noticing that $f^n$ converges point-wise to $\mathbf 1_{\{\sfv\}}$ and $n \to \infty$ and using \cref{prop:bvp} to bound the right-hand side of \eqref{eq:occupation_1}, we deduce from Lebesgue's dominated convergence theorem that, for every $\delta_0 > 0$, there is a constant $C_0 > 0$ such that
	\[
	 \left \lvert \mathbf E_{\sfv} \left[ \int_0^{T_X^\delta} \mathbf 1_{\{X(s) = \sfv \}} \, ds \right ] -\eta \delta \right \rvert \leq C_0 \delta^2.
	\]
	for all $0 < \delta \leq \delta_0$. This concludes the proof.
\end{proof}

\section{Main results}
\label{section:main_results}

\subsection{Construction of trajectories of sticky diffusions}

As mentioned in the introduction, sticky conditions are characterized by a slowdown which, in probabilistic terms, is given by a time-change of the process involving the local time. Our first result concerns the extension of this idea to metric graphs for a general diffusion.

Let $Y(t)=(j(t), y(t))$ be the nonsticky process generated by \eqref{generator2} and \eqref{eq:domain} for $\eta = 0$, and denote by $(\cG_t)_{t \geq 0}$ the natural filtration associated to $Y$.
As for the one dimensional case, we define the time-changed process
\begin{align}
\label{timechanged}
X(t)= Y (V^{-1}(t)),
\end{align}
where $V^{-1}$ is the right inverse of $V$, defined by
\begin{equation}\label{eq:time_change}
V^{-1}(t)= \inf \{s \geq 0\,:\, V(s) >t \}, \quad t>0
\end{equation}
and
\begin{align}
\label{time-change}
V(t)= t + \eta \ell_Y(t).
\end{align}
Since $V$ is continuous, we know from \cite[Proposition 4.6 p.43]{revuz-yor} that each $V^{-1}(t)$ is a stopping time for the natural filtration of $\ell_Y$. In particular it is a stopping time for $\cG_t$ because $\ell_Y$ is adapted to $\cG_t$.  Moreover, since $V$ is also strictly increasing, it is known that the process $V^{-1}$ is also strictly increasing and continuous and satisfies $V^{-1}(V(t)) = t$ almost surely. Consequently, it is a time change in the sense of \cite[Definition 1.2 p.180]{revuz-yor}. In particular the process $X$ is adapted to the time-changed filtration
\[
 \cF_t := \cG_{V^{-1}(t)} := \left \{ A \in \cG_\infty : A \cap \{ V^{-1}(t) \leq s \} \in \cG_s \quad \textnormal{for all } s \geq 0 \right \}.
\]
It is known that $X$ is a Markov process for the filtration $\cF_t$ (see \cite[Theorem 2.18 p.417]{revuz-yor} for this result in the case of a Brownian motion and \cite[Theorem 10.10]{D1965} for the general case). We will prove that this process is a version of the sticky diffusion on $\Gamma$, with stickiness parameter $\eta$. This is achieved by a characterization of the generator of the process $X$.

\begin{prop}\label{tm:time-change}
	Let $Y(t) = (j(t),y(t))$  be the nonsticky process generated by \eqref{generator2} and \eqref{eq:domain} for $\eta = 0$ and define the time-changed $X$ according to \eqref{timechanged}. Then, the semigroup induced by $X$ on $C_0(\Gamma)$ is generated by \eqref{generator2} and \eqref{eq:domain}, where $\eta$ is the same constant appearing in \eqref{time-change}. In particular $X$ is a version of the Markov process given by \eqref{generator2} and \eqref{eq:domain}. Moreover, $V^{-1}(t) = t - \eta \ell_X(t)$, where $\ell_X(t) := \ell_Y(V^{-1}(t))$ and $\ell_Y$ is the process appearing in \cref{tm:FW_FS}-\ref{item:FS}.
\end{prop}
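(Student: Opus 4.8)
The plan is to recognise $X = Y(V^{-1}(\cdot))$ as a solution of the martingale problem for $(L,D(L))$ with the \emph{sticky} domain ($\eta>0$), and then to conclude by uniqueness, which is available because by \cref{tm:FW_FS} the operator $(L,D(L))$ already generates a Feller semigroup on $C_0(\Gamma)$.

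First I would collect the elementary properties of the time change. As $V(t)=t+\eta\ell_Y(t)$ is continuous, strictly increasing, with $V(0)=0$ and $V(t)\uparrow+\infty$, the right inverse $V^{-1}$ is a genuine continuous strictly increasing inverse; plugging $s=V^{-1}(t)$ into $V(s)=s+\eta\ell_Y(s)$ gives $t=V^{-1}(t)+\eta\ell_Y(V^{-1}(t))$, that is $V^{-1}(t)=t-\eta\ell_X(t)$ with $\ell_X:=\ell_Y\circ V^{-1}$, which is continuous and increasing. A Lebesgue--Stieltjes change of variables $t=V(u)$, together with the facts that $\ell_Y$ charges only $\{Y=\sfv\}$ and that $\int_0^T\mathbf 1_{\{Y(u)=\sfv\}}\,du=0$ (\cref{tm:FW_FS}-\ref{item:FS}), then yields both $\int_0^t\mathbf 1_{\{X(s)=\sfv\}}\,ds=\eta\ell_X(t)$ and that $\ell_X$ increases only when $X=\sfv$; the same computation gives, for bounded measurable $h$,
\[
 \int_0^{V^{-1}(t)}h(Y(u))\,\mathbf 1_{\{Y(u)\ne\sfv\}}\,du=\int_0^t h(X(r))\,\mathbf 1_{\{X(r)\ne\sfv\}}\,dr,
\]
using that $dV=du$ on the open set $\{Y\ne\sfv\}$, where $\ell_Y$ is locally constant.

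Next, for $f\in D(L)$ — in particular $f\in C^2(\Gamma)$ — I would invoke the It\^o (Freidlin--Sheu) formula for the nonsticky diffusion $Y$ from \cite{FreidlinSheu}, in the time-independent form
\[
 f(Y(s))=f(Y(0))+\int_0^s Lf(Y(u))\,\mathbf 1_{\{Y(u)\ne\sfv\}}\,du+\int_0^s\sigma(Y(u))f'(Y(u))\,\mathbf 1_{\{Y(u)\ne\sfv\}}\,dW(u)+\Big(\sum_{i=1}^N\rho_i f_i'(0)\Big)\ell_Y(s),
\]
evaluate it at $s=V^{-1}(t)$, and transfer each term through the time change: the drift term becomes $\int_0^t Lf(X(r))\,\mathbf 1_{\{X(r)\ne\sfv\}}\,dr$ by the identity above, the local-time term becomes $\big(\sum_i\rho_i f_i'(0)\big)\ell_X(t)$, and the stochastic integral becomes, by the time-change theorem for stochastic integrals (e.g.\ \cite{revuz-yor}), a continuous local martingale $\tilde M$ for the time-changed filtration $\cF_t$ — in fact a true martingale since $f$ and $Lf$ are bounded. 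Using then the sticky boundary condition $\sum_i\rho_i f_i'(0)=\eta Lf(\sfv)$ together with $\eta\ell_X(t)=\int_0^t\mathbf 1_{\{X(s)=\sfv\}}\,ds$ and $Lf\in C(\Gamma)$, the last term equals $\int_0^t Lf(X(r))\,\mathbf 1_{\{X(r)=\sfv\}}\,dr$, so the two deterministic integrals combine and I obtain
\[
 f(X(t))-f(X(0))-\int_0^t Lf(X(s))\,ds=\tilde M(t),
\]
an $(\cF_t)$-martingale. Hence, for every starting point $\sfx$, the process $X$ solves the martingale problem for $(L,D(L))$.

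Finally, since $(L,D(L))$ generates a strongly continuous contraction semigroup on $C_0(\Gamma)$ (\cref{tm:FW_FS}), this martingale problem is well posed and its unique solution is the Feller process it generates; therefore $X$ is a version of that process, the semigroup it induces on $C_0(\Gamma)$ is the one given by \eqref{generator2}--\eqref{eq:domain}, and the identity $V^{-1}(t)=t-\eta\ell_X(t)$ was established in the first step. I expect the main obstacle to be precisely this transfer of the It\^o formula through $V^{-1}$: the time change is continuous and strictly increasing but generally singular, so one must handle carefully the Lebesgue--Stieltjes change of variables in the drift term and, especially, the time change of the stochastic integral — verifying that the resulting process is a (local) martingale adapted to $\cF_t$ with the correct bracket — and one must also rely on the precise normalisation of the Freidlin--Sheu local time $\ell_Y$, so that its coefficient in the nonsticky It\^o formula is exactly $\sum_i\rho_i f_i'(0)$, which is what makes the sticky boundary condition turn the compensated process into a martingale.
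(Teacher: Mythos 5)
Your proof is correct but takes a genuinely different route from the paper's. The paper argues at the level of Dynkin's formula and first-passage times: it shows $T_X^\delta = V(T_Y^\delta)$, expands $\mathbf E_\sfv[T_X^\delta]$ with \cref{lem:MFPT}, and lets $\delta\downarrow 0$ in the Dynkin identity to extract the junction condition $\eta Lf(\sfv)=\sum_i\rho_i f_i'(0)$, thereby proving $D(L^\eta)\cap C^2_b(\Gamma)\subset D(L)$ directly. You instead apply the nonsticky Freidlin--Sheu It\^o formula \cite[Lemma 2.3]{FreidlinSheu} to $Y$, push it through the time change (drift integral by $dV=du$ off $\{Y=\sfv\}$, local-time term becomes $\ell_X$, stochastic integral becomes a time-changed martingale), and observe that the boundary condition makes the compensated process $f(X(t))-\int_0^t Lf(X)\,ds$ an $(\cF_t)$-martingale; uniqueness of the $(L,D(L))$-martingale problem (Ethier--Kurtz, since $(L,D(L))$ is Feller by \cref{tm:FW_FS}) then forces $X$ to be a version of the sticky diffusion. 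What the paper's approach buys is self-containedness: it does not invoke the It\^o formula for nonsticky diffusions and instead uses only Dynkin plus the small-radius asymptotics of \cref{lem:MFPT,lem:kirchhoff}, which are also reused in \cref{tm:characterization}; moreover it never needs the time-change theorem for stochastic integrals. What your approach buys is conceptual clarity and avoidance of the $\delta\downarrow 0$ asymptotics entirely, at the price of importing the Freidlin--Sheu It\^o formula and the well-posedness of the Feller martingale problem. Two small points to tighten: (i) to ensure the time-changed stochastic integral is a true martingale, rely on $V^{-1}(t)\leq t$ (bounded stopping times) rather than only on the boundedness of $f$ and $Lf$, since $f\in D(L)$ does not force $\sigma f'$ to be bounded, or work with local martingales; (ii) your argument, like the paper's, verifies the martingale property only for $f\in D(L)\cap C^2_b(\Gamma)$, so the conclusion implicitly uses that this class is a core for $L$ — the paper glosses over the same point with "it then follows that the semigroups coincide," so this is not a gap relative to the paper, but it is worth being aware of.
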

\begin{proof}

	 Let $f \in D(L^\eta) \cap C^2_b(\Gamma)$, with $\eta > 0$, where $L^\eta$ denotes the generator of the semigroup induced by $X$ on $C_0(\Gamma)$ and $D(L^\eta)$ is its domain. One may check that $L^\eta f(\sfx) = Lf(\sfx)$ for every $\sfx \neq \sfv$ (see \cref{tm:characterization} below for a similar argument). We therefore turn to the characterization of the junction condition.
	 
	 Fix $\delta>0$, we recall the definition of the stopping time
		\begin{align*}
		T_X^\delta:=\inf\{t \geq 0 : x(t) \geq \delta\}.
		\end{align*} 
	  From Dynkin's formula we know
		\begin{equation}\label{dynkin}
		 \mathbf{E}_{\sfv}[f(X_{T_X^\delta} ) ] - f(\sfv) = \mathbf{E}_\sfv \left [\int_0^{T_X^\delta} L f(X(s)) \, ds \right ].
		\end{equation}
		Setting $\rho_j^\delta := \mathbf{P}_\mathsf{v} \left ( i(T_X^\delta) = j \right)$ for each $1 \leq j \leq N$, we have
		\begin{align}
		\label{dynkin:numerator}
		\mathbf{E}_{\sfv}[f(X_{T_X^\delta} ) ] - f(\sfv) = \sum_{k=1}^{n} \rho^\delta_k f_k(\delta)-f(\sfv)= \sum_{k=1}^{n} \rho^\delta_k (f_k(\delta) - f_k(0)).
		\end{align}
		So that 
		\begin{equation}\label{eq:dynkin_2}
		 \mathbf{E}_\sfv \left [\int_0^{T_X^\delta} L f(X(s)) \, ds \right ] = \sum_{k=1}^{n} \rho^\delta_k (f_k(\delta) - f_k(0)).
		\end{equation}
		We introduce the analogous hitting time for the nonsticky diffusion $Y$
		\begin{align*}
		T_Y^\delta:=\inf\{t \geq 0 : y(t) \geq \delta\}.
		\end{align*}
		Our claim is that $T_X^\delta=V(T_Y^\delta)$, where $V$ is the process defined in \eqref{time-change}. Since $V^{-1}$ is its inverse and it is a continuous process, trivially we have $V^{-1} (V(T_Y^\delta))= T_Y^\delta$ and $y(V^{-1} (V(T_Y^\delta)))=\delta$. Recalling that $X(t)=Y(V^{-1} (t))$, we obtain the inequality $T_X^\delta \leq V(T_Y^\delta)$, which we do not know a priori to be an equality as it might not be the infimum. If we assume that the last inequality is strict, then by composing with $V^{-1}$ we have
		\begin{align*}
		V^{-1} (T_X^\delta) < V^{-1}(V(T_Y^\delta))=T_Y^\delta,
		\end{align*}
		since $V^{-1}$ is strictly increasing. But $y(V^{-1} (T_X^\delta))= x(T_X^\delta)=\delta$, in contradiction with the definition of hitting time $T_Y^\delta$. We conclude that
		\begin{align*}
		T_X^\delta = V(T_Y^\delta)= T_Y^\delta + \eta \ell_Y(T_Y^\delta).
		\end{align*}
		Taking expectations, we get
		\begin{align*}
		\mathbf{E}_{\sfv}[T_X^\delta]=\mathbf{E}_{\sfv}[T_Y^\delta]+\eta \mathbf{E}_{\sfv}[\ell_Y(T^\delta_Y)].
		\end{align*}
		From \cref{lem:MFPT}, we have $\mathbf{E}_{\sfv}[T_Y^\delta]=O(\delta^2)$ and, using also \eqref{eq:freidlin-sheu-sde} and the fact that $b$ is bounded, we have
		\[
		 \mathbf{E}_{\sfv}[\ell_Y(T^\delta_Y)]=\mathbf{E}_{\sfv}[y(T^\delta_Y)] + \mathbf{E}_{\sfv} \left [ \int_0^{T_{Y}^\delta} b_{j(s)}(y(s)) ds \right ] = \delta + O(\delta^2) \quad \textnormal{as } \delta \downarrow 0.
		\]
		Moreover, from \eqref{eq:dynkin_2}, we deduce
		\begin{equation*}
		 \frac{\mathbf E_\sfv \left [ T_X^\delta \right ]}{\delta} \left ( Lf(\sfv) + \mathbf{E}_\sfv \left [\frac{1}{\mathbf E_\sfv \left [ T_X^\delta \right ]} \int_0^{T_X^\delta} \left( L f(X(s)) - Lf(\sfv) \right) \, ds \right ] \right ) = \sum_{k=1}^{n} \rho^\delta_k \frac{f_k(\delta) - f_k(0)}{\delta}.
		\end{equation*}
		Using \eqref{eq:vanishing_integal} together with \cref{lem:MFPT,lem:kirchhoff}, we conclude by passing to the limit $\delta \downarrow 0$ that
		
			\begin{align*}
			\eta L f(\sfv)= \sum_{k=1}^{n} \rho_k f^\prime_k (0).
			\end{align*}
		This proves that $D(L^\eta) \cap C^2_b(\Gamma) \subset D(L)$. It then follows that the semigroups induced by $(L,D(L))$ and $(L^\eta,D(L^\eta))$ coincide.

		Finally, we define
		\begin{equation}\label{eq:time_change_local_time}
		\ell_X(t) = \ell_Y(V^{-1}(t)) \quad \textnormal{a.s. for all } t \geq 0.
		\end{equation}
		Notice that
		\[
		t = V(V^{-1}(t)) = V^{-1}(t) + \eta \ell_Y(V^{-1}(t)) = V^{-1}(t) + \eta \ell_X (t),
		\]
		so that
		\begin{equation}\label{eq:time_change_representation}
		V^{-1}(t) = t - \eta \ell_X(t).
		\end{equation}
	\end{proof}

	From Theorem \ref{tm:time-change} we are able, similarly to \cite[Theorem 3.1]{salins}, to characterize the occupation time of the sticky diffusion process at $\sfv$.

	\begin{prop}\label{prop:occupation}
		For every bounded and measurable function $f \colon \RR_+ \to \RR$ the time-changed process $X$ satisfies
		\begin{equation}\label{eq:integral_local_time}
		\int_0^t f(s) \mathbf{1}_{\{X(s) = \sfv\}}\, ds = \eta \int_0^t f(s) d\ell_X(s) \quad \textnormal{almost surely for every } t > 0,
		\end{equation}
		where the integral in the right-hand side must be understood in the Lebesgue-Stieltjes sense. In particular, the occupation time of $X$ at $\sfv$ is given by
		\begin{align}
		\label{simple-case}
		\int_0^t \mathbf{1}_{\{X(s) = \sfv\}}\, ds = \eta \ell_X(t) \quad \textnormal{almost surely for every } t > 0.
		\end{align}

	\end{prop}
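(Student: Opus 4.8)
The plan is to establish the occupation-time identity \eqref{simple-case} by an explicit change of variables in the time scale $V$, and then to deduce the general identity \eqref{eq:integral_local_time} from it by a short measure-theoretic argument. Throughout I use the objects introduced in \cref{tm:time-change}: $X(s) = Y(V^{-1}(s))$, $\ell_X(s) = \ell_Y(V^{-1}(s))$, $V^{-1}(s) = s - \eta\, \ell_X(s)$, and $V(u) = u + \eta\, \ell_Y(u)$, where $V$ is continuous and strictly increasing with $V(u) \geq u \to \infty$, so that $V^{-1}\circ V$ is the identity and $V(V^{-1}(t)) = t$. First I would note that, since $X$ has continuous paths, $s \mapsto \mathbf 1_{\{X(s) = \sfv\}}$ is Borel measurable, and then apply the change-of-variable formula for Stieltjes integrals along the continuous strictly increasing function $V$ to write
\[
\int_0^t \mathbf 1_{\{X(s) = \sfv\}}\, ds = \int_0^{V^{-1}(t)} \mathbf 1_{\{X(V(u)) = \sfv\}}\, dV(u) = \int_0^{V^{-1}(t)} \mathbf 1_{\{Y(u) = \sfv\}}\, dV(u),
\]
where the last equality uses $X(V(u)) = Y(V^{-1}(V(u))) = Y(u)$, which holds almost surely thanks to the strict monotonicity of $V$.

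Next I would expand $dV(u) = du + \eta\, d\ell_Y(u)$ in the last integral. The Lebesgue part contributes $\int_0^{V^{-1}(t)} \mathbf 1_{\{Y(u) = \sfv\}}\, du$, which vanishes because, by \cref{tm:FW_FS}-\ref{item:FS}, the nonsticky diffusion $Y$ has zero occupation time at $\sfv$. For the remaining term, \cref{tm:FW_FS}-\ref{item:FS} also tells us that $\ell_Y$ increases only on $\{u : Y(u) = \sfv\}$, so the Lebesgue--Stieltjes measure $d\ell_Y$ is carried by that set and
\[
\eta \int_0^{V^{-1}(t)} \mathbf 1_{\{Y(u) = \sfv\}}\, d\ell_Y(u) = \eta \int_0^{V^{-1}(t)} d\ell_Y(u) = \eta\, \ell_Y(V^{-1}(t)) = \eta\, \ell_X(t),
\]
using $\ell_Y(0) = 0$ and the definition of $\ell_X$. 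Combining the two displays yields \eqref{simple-case}.

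Finally, for a bounded measurable $f$, the identity \eqref{simple-case} says that the increasing functions $t \mapsto \int_0^t \mathbf 1_{\{X(s) = \sfv\}}\, ds$ and $t \mapsto \eta\, \ell_X(t)$ coincide, hence they induce the same locally finite Lebesgue--Stieltjes measure on $\RR_+$; integrating $f$ against this common measure gives \eqref{eq:integral_local_time}. I expect the only genuinely delicate point to be the justification of the change of variables along the merely continuous (and possibly singular) time change $V$ --- in particular the almost sure identity $X(V(u)) = Y(u)$, which relies on $V$ being strictly increasing, and the fact that $d\ell_Y$ does not charge the open set $\{Y \neq \sfv\}$ --- while the remaining manipulations are routine bookkeeping.
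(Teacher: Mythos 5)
Your proof is correct and follows essentially the same approach as the paper's: both hinge on a change of time variable linking $X$ and $Y$, the vanishing occupation time of $Y$ at $\sfv$, and the fact that $d\ell_Y$ (equivalently $d\ell_X$) is carried on $\{\cdot = \sfv\}$. The paper starts from $0 = \int_0^{V^{-1}(t)} \mathbf 1_{\{Y(s) = \sfv\}}\,ds$ and changes variables via $V^{-1}$ using $dV^{-1}(s) = ds - \eta\,d\ell_X(s)$, whereas you change variables the other direction via $V$ using $dV(u) = du + \eta\,d\ell_Y(u)$, and you obtain the general $f$ case by equality of Stieltjes measures rather than by simple-function approximation and dominated convergence — these are cosmetic mirror images of one another, not a genuinely different route.
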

	\begin{proof}
	 We start by proving \eqref{simple-case}. From \cref{tm:FW_FS}-(ii), we know that, since the diffusion $Y$ has $\eta=0$ in \eqref{eq:domain},
			\begin{align*}
			\int_0^t \mathbf{1}_{\{Y(s) = \sfv \}} ds =0 \quad \textnormal{almost surely for all } t > 0.
			\end{align*}
			In particular,
			\begin{align*}
			\int_0^{V^{-1}(t)} \mathbf{1}_{\{Y(s) = \sfv \}} ds =0 \quad \textnormal{almost surely for all } t > 0,
			\end{align*}
			and since $V^{-1}(t)$ is an increasing function (in particular it has finite variation) and using formula \eqref{eq:time_change_representation}, we rewrite the integral as the Lebesgue-Stieltjes integral
			\begin{align}
			\label{difference-integral-local-time}
			0&=  \int_0^{V^{-1}(t)} \mathbf{1}_{\{Y(s) = \sfv \}} ds = \int_0^{t} \mathbf{1}_{\{X(s) = \sfv \}} dV^{-1}(s) = \int_0^{t} \mathbf{1}_{\{X(s) = \sfv \}} d(s - \eta \ell_X(s)) \notag\\
			&=\int_0^{t} \mathbf{1}_{\{X(s) = \sfv \}} ds - \eta \int_0^{t} \mathbf{1}_{\{X(s) = \sfv \}} d \ell_X(s)\\
			&=\int_0^{t} \mathbf{1}_{\{X(s) = \sfv \}} ds - \eta \ell_X(t) \notag,
			\end{align}
			where, in the last step, we used the fact that the random measure $d \ell_X$ is almost surely supported on the set $\{t \geq 0 : X(t) = \sfv\}$. We now turn to the proof of \eqref{eq:integral_local_time}. Notice first that the conclusion is a direct consequence of \eqref{simple-case} if $f$ is a simple function. The general case then follows from the dominated convergence theorem, after approximation of $f$ by simple functions.
	\end{proof}

	\begin{remark}
		The diffusion $X$ we are considering is a Markov process, as guaranteed by \cite[Theorem 3.1]{FreidlinWentzell}. However, if we want to introduce memory to the process and make it non-Markovian, we can use recent developments provided by the Fractional Boundary Value Problem, as described in \cite[Section 4]{BonDov} and \cite{d2024fractional}. A complete treatment, on star graphs, is also available in \cite[Section 4]{boncoldovpag2024}.\\
		Let us denote by $H$ a subordinator characterized by
		\begin{align*}
		\mathbf{E}_0[e^{-\lambda H(t)}]= e^{-t \Phi(\lambda)}, \quad \lambda >0,
		\end{align*}
		where $\Phi$ is a Bernstein function
		\begin{align*}
		\Phi(\lambda)= \int_0^\infty (1-e^{-\lambda z}) \Pi^\Phi(dz), \quad \lambda>0,
		\end{align*}
		and  $\Pi^\Phi$ is a L\'evy measure on $(0,\infty)$ such that $\int_0^\infty (1 \wedge z) \Pi^\Phi(dz) < \infty$. For us,  $\overline{\Pi}^\Phi(z)=\Pi^\Phi(z,\infty)$ is the tail of the L\'evy measure.  We also introduce the process
		\begin{align*}
		V_H(t)=t+ H(\eta \ell_Y(t))
		\end{align*}
		and $V_H^{-1}$ is its right inverse. The most significant difference will be this dependence on $H$ in the time change.

		Let $M>0$, $w\geq 0$ and $\mathcal{M}_w$ be the set of (piecewise) continuous function on $[0, \infty)$ of exponential order $w$ such that $\vert \varphi(t)\vert \leq M e^{wt}$. Let $\varphi \in \mathcal{M}_w \cap C[0,\infty)$ with $\varphi^\prime \in \mathcal{M}_w$. Then, we define the Caputo-D\v{z}rba\v{s}jan-type derivative as
		\begin{align}
		\label{Caputo}
		\mathfrak{D}_{t}^\Phi \varphi(t):=\int_0^t \varphi^\prime (s) \overline{\Pi}^\Phi(t-s) ds
		\end{align}
		which is a convolution type operator. We refer to the operator in \eqref{Caputo} as a Caputo-D\v{z}rba\v{s}jan-type derivative, as it generalizes the classical Caputo-D\v{z}rba\v{s}jan derivative by replacing the specific tail of the L\'evy measure associated with $\Phi(\lambda) = \lambda^\alpha$, $\alpha \in (0,1)$, with a more general one. It was introduced independently by Caputo \cite{Caputo1969} and by D\v{z}rba\v{s}jan \cite{Dzhrbashyan1966}, hence the name.
		Then, by proceeding as in \cite[Theorem 10]{BonDov}, we have that $Y(V_H^{-1}(t))$ is related to the non-local dynamic condition
		\begin{align*}
		\eta \mathfrak{D}_{t}^\Phi u(t,\sfv) = \sum_{i=1}^n \rho_i \frac{d}{dx} u_i(t,x)\vert_{x=0}.
		\end{align*}
		In this case, the dynamics is no longer Markovian, since in the time change appears the inverse of the subordinator $H$, which is non-Markov. Then, it spends a non-exponential time in the vertex. Just as the process
		$X$ has its roots in the skew sticky Brownian motion, the process $Y(V_H^{-1})$ originates from the non-local skew sticky Brownian motion, described in \cite{colantoni2023non}.
	\end{remark}

	\subsection{Characterization of sticky diffusions}
	We now prove that any sticky process has a modification that is a time-changed nonsticky one.

	\begin{tm}\label{tm:characterization}
	 Let $X(t) = (i(t), x(t))$ be the process generated by \eqref{generator2} and \eqref{eq:domain} with $\eta > 0$. Then there exists a nonsticky diffusion $\tilde Y$ on $\Gamma$ and a strictly increasing continuous function $V^{-1}$ such that $X$ and $\tilde Y(V^{-1}(\cdot))$ are indistinguishable. Moreover, there exists an increasing process $\ell_X$, adapted to the completion of the natural filtration of $X$ with respect to the collection of probability measures $\left ( \mathbf P_\sfx \right)_{\sfx \in \Gamma}$, such that $\ell_X$ almost surely increases only when $X = \sfv$, that $\eta \ell_X(t) = \int_0^t \mathbf{1}_{\{X(s) = \sfv\}} \, ds$ and $\ell_X(t) = \ell_{\tilde Y}(V^{-1}(t))$ almost surely, where $\ell_{\tilde Y}$ is the process appearing in \cref{tm:FW_FS}-\ref{item:FS}.
	\end{tm}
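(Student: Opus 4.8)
Since $\eta>0$, the natural candidate for the inverse time change is the continuous nondecreasing process
\[
 V^{-1}(t) := \int_0^t \mathbf 1_{\{X(s)\neq\sfv\}}\,ds \;=\; t - \eta\,\ell_X(t), \qquad \ell_X(t) := \frac{1}{\eta}\int_0^t \mathbf 1_{\{X(s)=\sfv\}}\,ds,
\]
which is adapted to the completed natural filtration of $X$. Writing $g$ for the right inverse of $V^{-1}$, the plan is to set $\tilde Y := X\circ g$ and establish in turn: \textbf{(a)} $V^{-1}$ is strictly increasing with $V^{-1}(\infty)=\infty$, so that $g$ is a genuine continuous strictly increasing inverse and $X$, $\tilde Y(V^{-1}(\cdot))$ are indistinguishable; \textbf{(b)} $\tilde Y$ is the nonsticky diffusion on $\Gamma$; \textbf{(c)} $\ell_X = \ell_{\tilde Y}\circ V^{-1}$ a.s., with $\ell_{\tilde Y}$ the process of \cref{tm:FW_FS}-\ref{item:FS}.

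For \textbf{(a)}, strict monotonicity of $V^{-1}$ amounts to $X$ being a.s.\ not identically $\sfv$ on a non-degenerate interval, which by the Markov property reduces to the regularity of $\sfv$, i.e.\ $\mathbf P_\sfv(\xi_\sfv=0)=1$; this follows from $\xi_\sfv\leq T_X^\delta$ together with $\mathbf E_\sfv[T_X^\delta]=\eta\delta+O(\delta^2)\to0$ as $\delta\downarrow0$ (\cref{lem:MFPT}). I would obtain $V^{-1}(\infty)=\infty$ a posteriori: define $\tilde Y$ first only on its lifetime $[0,V^{-1}(\infty))$, where it is Markov by the time-change results recalled before \cref{tm:time-change}, and once \textbf{(b)} identifies its generator with the conservative nonsticky one, that lifetime must be infinite. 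Then $\tilde Y(V^{-1}(t))=X(g(V^{-1}(t)))=X(t)$ for every $t$, giving indistinguishability.

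For \textbf{(b)} I would mimic the computation in \cref{tm:time-change}. Away from $\sfv$, $\tilde Y$ coincides with the one-dimensional diffusion generated by $L_i$ (a time-changed form of \cref{tm:FW_FS}-\ref{item:FW}), so the generator $L^0$ of $\tilde Y$ equals $L$ on $\Gamma\setminus\{\sfv\}$ for $f\in D(L^0)\cap C^2_b(\Gamma)$; it remains to find the junction condition. Fix such an $f$, set $\tilde T^\delta := V^{-1}(T_X^\delta)$, and check, exactly as for the identity $T_X^\delta=V(T_Y^\delta)$ in \cref{tm:time-change}, that $\tilde T^\delta$ is the first time $\tilde Y$ reaches distance $\delta$ from $\sfv$, so $\tilde Y(\tilde T^\delta)=X(T_X^\delta)$. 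Dynkin's formula for $\tilde Y$ gives
\[
 \sum_{j=1}^N \rho_j^\delta\big(f_j(\delta)-f_j(0)\big) \;=\; \mathbf E_\sfv\big[f(\tilde Y(\tilde T^\delta))\big]-f(\sfv) \;=\; \mathbf E_\sfv\Big[\int_0^{\tilde T^\delta} L^0 f(\tilde Y(s))\,ds\Big],
\]
with $\rho_j^\delta:=\mathbf P_\sfv(i(T_X^\delta)=j)\to\rho_j$ by \cref{lem:kirchhoff}. Since $\tilde T^\delta=T_X^\delta-\int_0^{T_X^\delta}\mathbf 1_{\{X(s)=\sfv\}}\,ds$, \cref{lem:MFPT} and \cref{lem:occupation} yield $\mathbf E_\sfv[\tilde T^\delta]=O(\delta^2)$; dividing the identity by $\delta$, Taylor-expanding the left-hand side, controlling the right-hand side as in \eqref{eq:vanishing_integal}, and letting $\delta\downarrow0$ leaves $\sum_{j}\rho_j f_j'(0)=0$. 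Hence $D(L^0)\cap C^2_b(\Gamma)$ lies in the domain \eqref{eq:domain} with $\eta=0$, and, arguing as in \cref{tm:time-change}, the two semigroups coincide, so $\tilde Y$ is the nonsticky diffusion (and conservative, closing \textbf{(a)}).

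For \textbf{(c)}, by construction $\eta\ell_X(t)=\int_0^t\mathbf 1_{\{X(s)=\sfv\}}\,ds$, $V^{-1}(t)=t-\eta\ell_X(t)$, and $d\ell_X=\eta^{-1}\mathbf 1_{\{X(t)=\sfv\}}\,dt$ is carried by $\{X=\sfv\}$, so $\ell_X$ is adapted, increasing, and increases only at the vertex. The remaining identity $\ell_X=\ell_{\tilde Y}\circ V^{-1}$ I would get by comparing with \cref{tm:time-change} and \cref{prop:occupation} applied to $\tilde Y$ in the role of the nonsticky process: this produces a sticky diffusion $\tilde Y(\hat V^{-1}(\cdot))$ with $\hat V(t)=t+\eta\,\ell_{\tilde Y}(t)$, and since this process and $X$ both have the law of the sticky diffusion and are obtained from $\tilde Y$ by the same measurable recipe, one deduces $X=\tilde Y(\hat V^{-1}(\cdot))$, hence $V^{-1}=\hat V^{-1}$ and $\ell_{\tilde Y}(V^{-1}(t))=\eta^{-1}\int_0^t\mathbf 1_{\{X(s)=\sfv\}}\,ds=\ell_X(t)$ by \cref{prop:occupation}. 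I expect this last step — showing that the occupation-time-defined $V^{-1}$ coincides with the time change canonically attached to $\ell_{\tilde Y}$, which also requires realizing $\ell_{\tilde Y}$ as a measurable functional of the path of $\tilde Y$ — to be the main obstacle; the generator computation in \textbf{(b)} is routine given \cref{lem:MFPT,lem:kirchhoff,lem:occupation}.
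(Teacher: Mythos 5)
Parts (a) and (b) of your plan track the paper's actual proof closely. The paper also sets $R(t):=\int_0^t \mathbf 1_{\{X(s)\neq\sfv\}}ds$ (your $V^{-1}$), shows $R$ has a continuous strictly increasing inverse $R^{-1}$, defines $\tilde Y:=X\circ R^{-1}$, and identifies its generator as the nonsticky one via Dynkin's formula together with \cref{lem:MFPT}, \cref{lem:occupation} and \cref{lem:kirchhoff} — exactly the computation you sketch. (The paper gets strict monotonicity of $R$ from a path-continuity argument rather than from regularity of $\sfv$ via $\mathbf E_\sfv[T_X^\delta]\to0$; your route is equally valid and arguably cleaner, since the paper's \cref{prop:trap} only rules out $\sfv$ being a trap and does not directly give $\mathbf P_\sfv(\xi_\sfv=0)=1$, whereas your $\xi_\sfv\leq T_X^\delta$ estimate does.)

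The genuine gap is in part (c), and you correctly identify it as the main obstacle but then wave at it rather than close it. The argument ``$X$ and $\tilde Y(\hat V^{-1}(\cdot))$ both have the law of the sticky diffusion and are obtained from $\tilde Y$ by the same measurable recipe, hence equal'' does not work. They are \emph{not} obtained from $\tilde Y$ by the same recipe: $X=\tilde Y\circ R$ where $R$ is defined from the occupation time of $X$ at $\sfv$, while $\tilde Y(\hat V^{-1})$ uses $\hat V^{-1}$, the inverse of $t+\eta\ell_{\tilde Y}(t)$, defined from the local time of $\tilde Y$. Showing these two time changes coincide, i.e.\ $\hat V(R(t))=t$ a.s., is precisely the identity to be proved — so the ``same recipe'' claim is circular. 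And equality in law alone never gives indistinguishability of two processes carried on the same $\omega$. The paper fills this gap with a concrete $L^1$ computation: it pushes the occupation-time approximation
\[
\Lambda\ell_{\tilde Y}(s)=\lim_{\delta\to0}\delta^{-1}\Lambda\int_0^s\mathbf 1_{\{\tilde y(r)\leq\delta\}}dr \quad\text{in }L^1
\]
from \cref{tm:FW_FS}-\ref{item:FS} through the time changes $R$ and $V^{-1}$, obtains
\[
\ell_{\tilde Y}(R(t))=\lim_{\delta\to0}\frac{\Lambda}{\delta}\int_0^t\mathbf 1_{\{0<x(s)\leq\delta\}}ds
\qquad\text{and}\qquad
\ell_{\tilde X}(t)=\lim_{\delta\to0}\frac{\Lambda}{\delta}\int_0^t\mathbf 1_{\{0<\tilde x(s)\leq\delta\}}ds
\]
in $L^1$, and then uses \cref{prop:occupation} together with the equality of transition functions of $X$ and $\tilde X:=\tilde Y\circ V^{-1}$ to conclude $\int_0^t\mathbf 1_{\{x(s)=0\}}ds=\eta\ell_{\tilde Y}(R(t))$ $\mathbf P_\sfv$-a.s., which is exactly $\hat V(R(t))=t$. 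Without an argument of this type, your part (c) — and hence both the indistinguishability of $X$ with the canonical time-changed nonsticky process and the identity $\ell_X=\ell_{\tilde Y}\circ V^{-1}$ — is unproved.
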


	\begin{proof}
	The argument is reminiscent of part 4 of the proof of \cite[Theorem 5]{EP2014}.
	 Set
	 \[
	  R(t) := \int_0^t \mathbf 1_{\{X(s) \neq \sfv \}} \, ds = t - \int_0^t \mathbf 1_{\{X(s) = \sfv \}} \, ds,
	 \]
	 for $t \geq 0$ and notice that $R$ is a.s. continuous and nondecreasing. Furthermore, since the vertex $\sfv$ is not a trap when $\eta$ is finite according to \cref{prop:trap}, we deduce that $\lim_{t \to \infty} R(t) = \infty$ almost surely. We may therefore consider the right inverse
	 \[
	  R^{-1}(t) : = \inf \left \{ s > 0 : R(s) > t \right \} \quad \textnormal{for } t \geq 0,
	 \]
	 which is strictly increasing and right-continuous.
	 We claim that $R$ cannot be constant on some interval $[a,b] \subset \RR_+$ with $b > a$. Indeed, if it is the case, we have $\int_a^b \mathbf{1}_{\{X(s) \neq \sfv\}}\, ds = 0$ and therefore the set $\{t \in [a,b] : X(t) \neq \sfv \}$ is negligible. Since $X$ almost surely has continuous paths, this implies that $X(t) = \sfv$ a.s. on $[a,b]$, a contradiction. This implies that $R^{-1}$ is in fact continuous and that $R^{-1}(R(t)) = t$ almost surely.
	 Moreover, since $R$ is $\mathcal{F}_t$-adapted, where $\mathcal F_t$ is the natural filtration of $X$, we have that each $R^{-1}(t)$ is a $\mathcal F_t$-stopping time. It follows that the process $R^{-1}$ defines a time-change in the sense of \cite[Definition 1.2 p.180]{revuz-yor}. We define the time-changed process
	 \[
	  \tilde Y(t) := X(R^{-1}(t)) \quad t \geq 0,
	 \]
	 which is a Markov process for the filtration $\tilde{\mathcal{G}}_t := \mathcal{F}_{R^{-1}(t)}$ (see \cite[Theorem 2.18 p. 417]{revuz-yor} for a proof of this fact in the case of a Brownian motion and \cite[Theorem 10.10]{D1965} for the general case).
	 Let now $(\tilde L, D(\tilde L))$ be the generator of the semigroup induced by $\tilde Y$ on $C_0(\Gamma)$. We claim that the semigroups generated by $(\tilde L, D(\tilde L))$ and $(L,D(L))$, for $\eta = 0$, coincide. Let $\bar x > 0$ and set $\bar \sfx  =(i, \bar x)$ for some $i \in \{ 1, \dots, N \}$. Let $f \in D(\tilde L)  \cap C^2_b(\Gamma)$ and let $\psi \in C^\infty(\RR_+)$ be such that
	 \[
	  0 \leq \psi \leq 1 \quad \textnormal{and} \quad \psi(x) = \begin{cases}
	                                                             0 \quad \textnormal{for } x \leq \bar x/3, \\
	                                                             1 \quad \textnormal{for } x \geq 2 \bar x /3 ,
	                                                            \end{cases}
	 \]
	and define $g \in C^2_b(\Gamma)$ by setting $g_i(x) = f_i(x) \psi(x)$. Notice that $g \in D(L)$.
	Set also $\tau := \inf \{ t \geq 0 : \tilde y(t) \leq 2 \bar x / 3 \}$ and notice that $R(t) = t$ on $[0,\tau]$ if $X$ is started at $\bar \sfx$. Then, using Dynkin's formula, we have
	 \begin{align*}
	   \mathbf E_{\bar \sfx}[f(\tilde{Y}(t \wedge \tau))] - f(\bar \sfx) & =  \mathbf E_{\bar \sfx}[g(X(R^{-1}(t \wedge \tau))] - g(\bar \sfx) = \mathbf E_{\bar \sfx}[g(X(t \wedge \tau)] - g(\bar \sfx) \\
	   & = \mathbf E_{\bar \sfx} \left [ \int_0^{t \wedge \tau} L g(X(s)) \, ds \right ].
	 \end{align*}
	 It follows that
	 \begin{align*}
	  \tilde Lf(\bar \sfx) & = \lim_{t \to 0^+} t^{-1} \left (\mathbf E_{\bar \sfx}[f(\tilde{Y}(t \wedge \tau))] - f(\bar \sfx) \right ) \\
	  & =   \lim_{t \to 0^+} t^{-1} \mathbf E_{\bar \sfx} \left [ \int_0^{t \wedge \tau} L g(X(s)) \, ds \right ] = Lg(\bar \sfx) = Lf(\bar \sfx),
	 \end{align*}
	 so that $\tilde Lf(\sfx) = L f(\sfx)$ for all $\sfx \neq \sfv$ and $f \in D(\tilde L) \cap C^2_b(\Gamma)$.

	 We now turn to the characterization of $D(\tilde L)$. Let $f \in D(\tilde L) \cap C^2_b(\Gamma)$ and consider, for all $\delta > 0$, the stopping times
	 \[
	  T_X^\delta := \inf \left \{ t \geq 0 : x(t) \geq \delta \right \},
	 \]
	 and $T_{\tilde Y}^\delta$, defined analogously. Following the argument in \cref{tm:time-change}, one may prove that $T_{\tilde Y}^\delta = R(T_X^\delta)$. Using Taylor's formula, we have
	 \begin{align*}\label{eq:reverse_time_change_1}
	 \mathbf{E}_\mathsf{v} \left [f(\tilde Y(T_{\tilde Y}^\delta)) - f(\mathsf{v}) \right ] & = \mathbf{E}_\mathsf{v} \left [f(X(T_{\tilde X}^\delta)) - f(\mathsf{v}) \right ]
	 = \sum_{j=1}^N \rho^\delta_j \left (f_j(\delta) - f_j(0) \right) \\
	 & = \sum_{j=1}^N \rho_j^\delta \left (f_j'(0) \delta + O(\delta^2) \right),
	\end{align*}
	where $\rho_j^\delta = \mathbf{P}_\mathsf{v} \left ( i(T_X^\delta) = j \right)$. On the other hand, Dynkin's formula yields
	\begin{align*}
	 \left \lvert \mathbf{E}_\mathsf{v} \left [f(\tilde Y(T_{\tilde Y}^\delta)) - f(\mathsf{v}) \right ] \right \rvert & = \left \lvert \mathbf{E}_\mathsf{v} \left [\int_0^{T_{\tilde Y}^\delta} L f( \tilde Y(s)) \, ds \right ] \right \rvert = \left \lvert \mathbf{E}_\mathsf{v} \left [\int_0^{R(T_{X}^\delta)} L f( X(R^{-1}(s))) \, ds \right ] \right \rvert \\
	 & \leq C \mathbf E_{\sfv} \left [ R(T_X^\delta) \right] = C  \left (\mathbf E_{\sfv} \left [ T_X^\delta \right ] - \mathbf E_{\sfv} \left [ \int_0^{T_X^\delta} \mathbf 1_{\{X(s) = \sfv \}} \, ds \right ] \right ) \\
	 & = O(\delta^2),
	\end{align*}
	where we used \cref{lem:MFPT,lem:occupation} to deduce that last line.
	Dividing by $\delta$ and sending $\delta \downarrow 0$, using \cref{lem:kirchhoff}, we deduce that
	\[
	 \sum_{j=1}^N \rho_j f_j'(0) = 0,
	\]
	which proves that $D(\tilde L) \cap C^2_b(\Gamma) \subset D(L)$. We conclude that $\tilde Y$ is a nonsticky diffusion on $\Gamma$. According to \cref{tm:time-change} we can define a sticky process $\tilde X(t) = \tilde Y(V^{-1}(t)) = X(R^{-1}(V^{-1}(t)))$, with stickiness parameter $\eta$. We claim that $R^{-1}(V^{-1}(t)) = t$ $\mathbf P_\sfv$-almost surely. Since
	\[
	 t = R^{-1} \circ V^{-1} \circ V \circ R(t) \quad \textnormal{almost surely,}
	\]
	it is enough to prove that $V(R(t)) = t$ $\mathbf P_\sfv$-almost surely. We a.s. have
	\begin{align*}
	 \frac{1}{\delta} \int_0^{R(t)} \mathbf 1_{\{\tilde y(s) \leq \delta \}} \, ds & = \frac{1}{\delta} \int_0^{t} \mathbf 1_{\{x(s) \leq \delta \}} dR(s) = \frac{1}{\delta} \int_0^{t} \mathbf 1_{\{x(s) \leq \delta \}} \, ds - \frac{1}{\delta} \int_0^t \mathbf 1_{\{x(s) = 0 \}} \, ds \\
	 & = \frac{1}{\delta} \int_0^{t} \mathbf 1_{\{ 0< x(s) \leq \delta \}} \, ds
	\end{align*}
	so that, using \cref{tm:FW_FS}, we have
	\begin{equation}\label{eq:characterization_1}
	 \ell_{\tilde Y}(R(t)) = \lim_{\delta \to 0} \frac{\Lambda}{\delta} \int_0^{t} \mathbf 1_{\{ 0< x(s) \leq \delta \}} \, ds \quad \textnormal{in expectation},
	\end{equation}
	where
	\[
	 \Lambda := \left( \sum_{i=1}^N \frac{2 \rho_i}{\sigma_i(0)^2} \right)^{-1}.
	\]
	Recall that
	\[
	 V(R(t)) = R(t) + \eta \ell_{\tilde Y}(R(t)) = t - \int_0^t \mathbf \mathbf 1_{\{x(s) = 0 \}} \, ds + \eta \ell_{\tilde Y}(R(t)).
	\]
	Therefore, we obtain the claim if we are able to prove
	\[
	 \int_0^t \mathbf \mathbf 1_{\{x(s) = 0 \}} \, ds = \lim_{\delta \to 0}  \frac{\eta \Lambda}{\delta} \int_0^{t} \mathbf 1_{\{ 0< x(s) \leq \delta \}} \, ds
	\]
	 $\mathbf P_\sfv$-almost surely as a consequence of \eqref{eq:characterization_1}.
	Recall that we have defined $\ell_{\tilde X}(t) := \ell_{\tilde Y}(V^{-1}(t))$ and that
	\begin{equation}\label{eq:characterization_V}
		V^{-1}(t) = t - \eta \ell_{\tilde X}(t) = t - \int_0^t \mathbf 1_{\{\tilde x(s) = 0 \}} \, ds,
	\end{equation}
	where we have used \cref{prop:occupation}. Noticing also that, using \eqref{eq:characterization_V},
	\begin{align*}
	 \int_0^{V^{-1}(t)} \mathbf{1}_{\{\tilde y(s) \leq \delta \}}\, ds & = \int_0^t \mathbf 1_{\{\tilde x(s) \leq \delta \}} d V^{-1}(s) = \int_0^t \mathbf 1_{\{0 < \tilde x(s) \leq \delta \}} d s,
	\end{align*}
	we deduce from \cref{tm:FW_FS} that
	\begin{equation}\label{eq:characterization_time_changed_local_times}
	 \ell_{\tilde X}(t) = \lim_{\delta \to 0} \frac{\Lambda}{\delta } \int_0^t \mathbf 1_{\{0 < \tilde x(s) \leq \delta \}} d s \quad \textnormal{in expectation}.
	\end{equation}
	Using \cref{prop:occupation} and the fact that $X$ and $\tilde X$ have the same transition function, we deduce that
	\begin{align*}
	 & \mathbf E_{\sfx} \left [ \left \lvert \int_0^t \mathbf 1_{\{x(s) = 0 \}} \, ds - \frac{\eta \Lambda}{\delta} \int_0^t \mathbf 1_{\{ 0< x(s) \leq \delta \}} \, ds \right \rvert \right ] \\
	 & \qquad = \mathbf E_{\sfx} \left [ \left \lvert \int_0^t \mathbf 1_{\{\tilde x(s) = 0 \}} \, ds - \frac{\eta \Lambda}{\delta} \int_0^t \mathbf 1_{\{ 0< \tilde x(s) \leq \delta \}} \, ds \right \rvert \right ] \\
	 & \qquad = \eta \mathbf E_{\sfx} \left [ \left \lvert \ell_{\tilde X}(t) - \frac{ \Lambda}{\delta} \int_0^t \mathbf 1_{\{ 0< \tilde x(s) \leq \delta \}} \, ds \right \rvert \right ] \xrightarrow{\delta \to 0} 0.
	\end{align*}
	The claim is proved. In particular, it follows that $X(t) = \tilde X(t)$ $\mathbf P_\sfx$-almost surely for every $t \geq 0$.
	Since these processes are continuous, they are in fact indistinguishable. We now check that $\tilde X$ is adapted to $(\widehat {\mathcal F}_t^X)_{t \geq 0}$, the completion  of the natural filtration $(\mathcal F_t^X)_{t \geq 0}$ of $X$ with respect to $\left ( \mathbf P_\sfx \right)_{\sfx \in \Gamma}$. Indeed, there is a $\mathbf P_\sfx$-negligible set $N$ such that $X(t,\omega) = \tilde X(t,\omega)$ for every $\omega \notin N$. Then, for $E \in \mathcal{F}_t^X$ we have
	\[
	 \{ \omega : \tilde X(t,\omega) \in E \} = \left (\{ \omega : X(t,\omega) \in E \} \cap N^{\mathsf{c}} \right ) \cup \left ( \{ \omega : \tilde X(t,\omega) \in E \} \cap N \right )\in \widehat {\mathcal F}_t^X.
	\]
	Moreover, from \eqref{eq:characterization_time_changed_local_times}, we deduce that $\ell_{\tilde X}$ is also adapted to $(\widehat{\mathcal F}_t^X)_{t \geq 0}$.
	This concludes the proof if we set $\ell_X = \ell_{\tilde X}$.

	\end{proof}

	\subsection{Stochastic differential equation}

	Applying \cref{tm:FW_FS}-(ii) to the nonsticky process $Y(t) = (j(t),y(t))$  we have that
	\begin{align}
	\label{sde:no-sticky}
	dy(t) = \sigma_{j(t)}(y(t)) dW(t) + b_{j(t)}(y(t)) dt + d\ell_Y(t),
	\end{align}
	where $W$ is a one-dimensional Brownian motion measurable with adapted to $\cG_t$, the natural filtration generated by $Y$. On the other hand, an SDE representation for sticky process was obtained in \cite[Theorem 3.2]{salins} in the particular case where the number of edges is $N=2$. The point of the next result is to extend this representation to the case $N > 2$.
	
	\begin{tm}\label{tm:SDE}
		Up to an extension of the filtered probability space, there exists a one dimensional Brownian motion $\widetilde W$, such that the sticky process $ X(t) = (x(t),i(t))$ satisfies
		\begin{align}
		\label{sde:sticky}
			dx(t) = \sigma_{i(t)}(x(t)) \mathbf{1}_{\{x(t) \ne 0\}}\, d\widetilde W(t) + b_{i(t)}(x(t)) \mathbf{1}_{\{x(t) \ne 0\}}\, dt + d\ell_X(t),
	\end{align}
	where $\ell_X$ is the process obtained in \cref{tm:characterization}.
		
	\end{tm}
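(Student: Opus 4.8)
The plan is to deduce \eqref{sde:sticky} from the nonsticky SDE \eqref{sde:no-sticky} by carrying out the random time change $t \mapsto V^{-1}(t)$ term by term. Thanks to \cref{tm:characterization} (together with \cref{tm:time-change}) it suffices to establish \eqref{sde:sticky} for the process $X(t) = Y(V^{-1}(t))$, where $Y(t) = (j(t),y(t))$ is a nonsticky diffusion on $\Gamma$ with natural filtration $(\cG_t)_{t \geq 0}$, $\ell_X(t) = \ell_Y(V^{-1}(t))$, and $V^{-1}(t) = t - \eta \ell_X(t)$; recall from \eqref{eq:time_change_representation} and \cref{prop:occupation} (see \eqref{simple-case}) that this reads $dV^{-1}(s) = \mathbf 1_{\{x(s) \neq 0\}}\, ds$ as measures, and that $X$ is $(\cF_t)$-adapted for $\cF_t = \cG_{V^{-1}(t)}$. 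Writing \eqref{sde:no-sticky} in integrated form gives $y(t) = x(0) + M(t) + \int_0^t b_{j(s)}(y(s))\, ds + \ell_Y(t)$, where $M(t) := \int_0^t \sigma_{j(s)}(y(s))\, dW(s)$ is a continuous $(\cG_t)$-local martingale with $\langle M\rangle_t = \int_0^t \sigma_{j(s)}^2(y(s))\, ds$.

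First I would handle the finite-variation terms. Evaluating at $V^{-1}(t)$, the change-of-variables formula for Lebesgue--Stieltjes integrals together with $dV^{-1}(s) = \mathbf 1_{\{x(s)\neq 0\}}\, ds$, the identity $Y(V^{-1}(s)) = X(s)$, and $\ell_Y(V^{-1}(t)) = \ell_X(t)$ yield
\[
\int_0^{V^{-1}(t)} b_{j(s)}(y(s))\, ds = \int_0^t b_{i(s)}(x(s))\, \mathbf 1_{\{x(s)\neq 0\}}\, ds,
\]
the value of $i(s)$ on $\{x(s) = 0\}$ being irrelevant because of the indicator. Next, since $V^{-1}$ is a continuous, strictly increasing time change in the sense of \cite[Definition 1.2 p.180]{revuz-yor} (as observed before \cref{tm:time-change}), the time-change theorem for continuous local martingales (see \cite[Chapter V]{revuz-yor}) ensures that $\widetilde M(t) := M(V^{-1}(t))$ is a continuous $(\cF_t)$-local martingale with $\langle \widetilde M\rangle_t = \langle M\rangle_{V^{-1}(t)} = \int_0^t \sigma_{i(s)}^2(x(s))\, \mathbf 1_{\{x(s)\neq 0\}}\, ds$ by the same substitution. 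Hence
\[
x(t) = y(V^{-1}(t)) = x(0) + \widetilde M(t) + \int_0^t b_{i(s)}(x(s))\, \mathbf 1_{\{x(s)\neq 0\}}\, ds + \ell_X(t),
\]
and it remains to produce a Brownian motion $\widetilde W$ with $\widetilde M(t) = \int_0^t \sigma_{i(s)}(x(s))\, \mathbf 1_{\{x(s)\neq 0\}}\, d\widetilde W(s)$.

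For this last step I would invert the diffusion coefficient off the vertex and complete with an independent noise. Extending the filtered probability space to carry a standard Brownian motion $\beta$ independent of $\cF_\infty$, set
\[
\widetilde W(t) := \int_0^t \frac{\mathbf 1_{\{x(s)\neq 0\}}}{\sigma_{i(s)}(x(s))}\, d\widetilde M(s) + \int_0^t \mathbf 1_{\{x(s) = 0\}}\, d\beta(s),
\]
which is well defined since $\sigma \geq \sigma_0 > 0$ by \eqref{eq_diffusion_positive}. Computing $\langle \widetilde W\rangle$ from the expression for $\langle \widetilde M\rangle$, using $\langle \widetilde M, \beta\rangle = 0$, gives $\langle \widetilde W\rangle_t = \int_0^t \mathbf 1_{\{x(s)\neq 0\}}\, ds + \int_0^t \mathbf 1_{\{x(s) = 0\}}\, ds = t$, so $\widetilde W$ is a standard Brownian motion by L\'evy's characterization. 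Since $d\langle \widetilde M\rangle$ charges only $\{x(s)\neq 0\}$, the integral $\int_0^t \mathbf 1_{\{x(s)=0\}}\, d\widetilde M(s)$ has vanishing quadratic variation, hence $\int_0^t \sigma_{i(s)}(x(s))\, \mathbf 1_{\{x(s)\neq 0\}}\, d\widetilde W(s) = \int_0^t \mathbf 1_{\{x(s)\neq 0\}}\, d\widetilde M(s) = \widetilde M(t)$. Substituting into the displayed decomposition of $x(t)$ gives \eqref{sde:sticky}.

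The step requiring the most care is the time change of the stochastic integral $M(V^{-1}(t))$: one must confirm that $V^{-1}$ is genuinely a time change for $(\cG_t)$, invoke the correct form of the time-change theorem so that $\widetilde M$ is a local martingale for the \emph{new} filtration $(\cF_t)$, and — because the effective diffusion coefficient $\sigma_{i(s)}(x(s))\mathbf 1_{\{x(s)\neq 0\}}$ vanishes precisely on the positive-measure occupation set $\{x = 0\}$ — route the construction of $\widetilde W$ through the auxiliary Brownian motion $\beta$ rather than dividing directly by the coefficient. The remaining manipulations, namely the change-of-variables identities for the $ds$- and $d\ell_X$-integrals (which rely on \eqref{simple-case}), are routine.
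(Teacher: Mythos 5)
Your proposal is correct and follows the same overall strategy as the paper — time-change the nonsticky SDE \eqref{sde:no-sticky} via $V^{-1}$ and then complete the resulting degenerate martingale term to a full Brownian motion by adjoining an independent noise on the occupation set $\{x=0\}$ — but the construction of $\widetilde W$ in the final step is a small technical variant. The paper writes the time-changed stochastic integral as $\int_0^t \sigma_{i(s)}(x(s))\,dS(s)$ with $S := W\circ V^{-1}$ (so $\langle S\rangle_t = V^{-1}(t) = \int_0^t \mathbf{1}_{\{x(s)\neq 0\}}\,ds$), then sets $\widetilde W := S + W_2\bigl(\int_0^\cdot \mathbf{1}_{\{x(s)=0\}}\,ds\bigr)$ and observes directly that $dS = \mathbf{1}_{\{x\neq 0\}}\,d\widetilde W$. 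You instead time-change the full martingale $M = \int \sigma\,dW$ to get $\widetilde M = M\circ V^{-1}$, compute $\langle\widetilde M\rangle$, and then recover $\widetilde W$ by \emph{inverting} the diffusion coefficient, $\widetilde W := \int_0^\cdot \sigma_{i(s)}^{-1}(x(s))\mathbf{1}_{\{x(s)\neq 0\}}\,d\widetilde M(s) + \int_0^\cdot \mathbf{1}_{\{x(s)=0\}}\,d\beta(s)$, justified by the uniform ellipticity \eqref{eq_diffusion_positive}. Both routes are sound and of comparable length; the paper's avoids dividing by $\sigma$ and leans slightly more on the time-change formula for stochastic integrals, while yours makes the use of L\'evy's characterization and the localization of $d\langle\widetilde M\rangle$ to $\{x\neq 0\}$ more explicit — details the paper defers to \cite[Theorem 3.2]{salins}. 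Your explicit verification that $\int_0^t \mathbf{1}_{\{x(s)=0\}}\,d\widetilde M(s)$ vanishes (zero quadratic variation) is exactly the point the paper compresses into the assertion $dS = \mathbf{1}_{\{x\neq 0\}}\,d\widetilde W$.
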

\begin{proof}
	The only difference with \cite[Theorem 3.2]{salins}, is that the existence of the SDE for the diffusion $y$ is given by \eqref{sde:no-sticky}. For the rest, we can proceed with the same steps.  Using \cref{tm:characterization}, we may assume that, almost surely, $X(t) = \tilde Y(V^{-1}(t))$ and $\ell_X(t) = \ell_{\tilde Y}(V^{-1}(t))$ for some nonsticky diffusion $\tilde Y(\cdot) = ( j(\cdot), \tilde y (\cdot))$. From \eqref{eq:time_change_representation} and \eqref{simple-case}, we have
		\begin{align*}
		V^{-1}(t)=t - \eta \ell_X(t) = \int_0^t \mathbf{1}_{\{X(s) \ne \sfv\}}\, ds
		\end{align*}
		Using the fact that $x(t)= \tilde y(V^{-1}(t))$, we have
		\begin{align*}
		\int_0^t \mathbf{1}_{\{X(s) \ne \sfv\}}\, ds= \int_0^t \mathbf{1}_{\{x(s) \ne 0\}}\, ds.
		\end{align*}
		By applying the time change $V^{-1}(t)$, we get
		\begin{align*}
		dx(t) = \sigma(x(t)) d S(t)+ b(x(t)) \mathbf{1}_{\{x(t) \ne 0\}}\, dt + d\ell_X(t)
		\end{align*}
		where $S(t)= W\left(\int_0^t \mathbf{1}_{\{x(s) \ne 0\}}\, ds\right) $. We define
		\begin{align*}
		\widetilde{W}(t):=W\left(\int_0^t \mathbf{1}_{\{x(s) \ne 0\}}\, ds\right) +W_2\left(\int_0^t \mathbf{1}_{\{x(s) = 0\}}\, ds\right),
		\end{align*}
		where $W_2$ is a Brownian motion independent of $W$ (which is known to exist up to extension of the probability space, see \cite[Remark 4.1 p.169]{KS1991}).
		This concludes the proof since $d S(t)=\mathbf{1}_{\{x(t) \ne 0\}}\, d\widetilde{W}(t) $.
\end{proof}

	Let us now provide a result, consistent with the well-known convergence of the upcrossing number to the local time, which we will later use in the proof of It\^o's formula.
	\begin{coro}\label{coro:local_time_approx}
		Let $f \colon \RR_+ \to \RR$ be bounded and continuous and let $X(t) = (i(t),x(t))$ be a sticky process on $\Gamma$ with generator \eqref{generator}-\eqref{eq:domain} and stickiness parameter $\eta$, started at $\sfv$. Define, for $\delta > 0$, the collections of stopping times $(\tau_n^\delta)_{n \in \NN}$ and $(\theta_n^\delta)_{n \in \NN}$ by $\tau_0^\delta = \theta_0^\delta = 0$ and
		\begin{equation}\label{eq:FS_stopping_times}
		\begin{split}
			\theta_n^\delta = \inf \left \{ t \geq \tau_{n-1}^\delta : x(t) = \delta \right \},  \\
			\tau_n^\delta = \inf \left \{ t \geq \theta_{n}^\delta : x(t) = 0 \right \},
		\end{split}
		\end{equation}
		for $n \geq 1$. Then, for every $t \geq 0$,
		\[
		 \lim_{\delta \to 0} \delta \sum_{n \in \NN} f(\theta_{n+1}^\delta) \mathbf{1}_{\{\theta_{n+1}^\delta \leq t \}} = \int_0^t f(s) d \ell_X(s) \quad \textnormal{in probability}.
		\]
		In particular
		\[
		 \lim_{\delta \to 0} \delta \sum_{n \in \NN} \mathbf{1}_{\{\theta_{n+1}^\delta \leq t \}} = \ell_X(t) \quad \textnormal{in probability.}
		\]
	\end{coro}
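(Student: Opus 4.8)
The plan is to read off from \cref{tm:SDE} that the radial part $x$ is a nonnegative continuous semimartingale and to identify $\ell_X$ with its local time at the level $0$; the statement then becomes the classical convergence of crossing numbers to local time. Concretely: since $x \ge 0$, Tanaka's formula at level $0$ reads $x(t) = x(0) + \int_0^t \mathbf 1_{\{x(s) > 0\}}\, dx(s) + L^0(t)$, where $L^0$ is the local time of $x$ at $0$ (the one carrying coefficient $1$ in this formula). Plugging \eqref{sde:sticky} into the stochastic integral and using that the random measure $d\ell_X$ is carried by $\{x = 0\}$, one gets $\int_0^t \mathbf 1_{\{x(s)>0\}}\, dx(s) = \int_0^t \sigma_{i(s)}(x(s))\mathbf 1_{\{x(s) \ne 0\}}\, d\widetilde W(s) + \int_0^t b_{i(s)}(x(s))\mathbf 1_{\{x(s) \ne 0\}}\, ds$, and comparing with \eqref{sde:sticky} itself yields $L^0(t) = \ell_X(t)$ almost surely. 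Note also $\ell_X(t) \le t/\eta$ by \cref{prop:occupation}.

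With this identification at hand I would argue as follows. Since $X$ starts at $\sfv$, we have $x(0) = 0 < \delta$, so $\theta^\delta_1, \tau^\delta_1, \theta^\delta_2, \tau^\delta_2, \dots$ are precisely the successive hitting times of $\delta$ and of $0$ defining the downcrossings of the interval $[0,\delta]$ by $x$; hence $D_t^\delta := \sum_n \mathbf 1_{\{\tau^\delta_n \le t\}}$ is the number of completed downcrossings of $[0,\delta]$ before time $t$, and $\big| \sum_n \mathbf 1_{\{\theta^\delta_{n+1} \le t\}} - D_t^\delta \big| \le 1$ for every $t$. By the classical downcrossing theorem for continuous semimartingales (see \cite[Ch.~VI]{revuz-yor}; see also \cite{FreidlinSheu} for the analogous statement for the nonsticky radial part), $\delta D_\cdot^\delta \to L^0(\cdot) = \ell_X(\cdot)$ uniformly on compact time intervals, in probability; combined with the $\le 1$ bound this gives $\delta \sum_n \mathbf 1_{\{\theta^\delta_{n+1} \le \cdot\}} \to \ell_X(\cdot)$ in the same sense, which is the last assertion of the corollary.

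For the weighted statement I would upgrade this to a convergence of measures. Let $\mu^\delta$ be the (finite, by $\ell_X(t) \le t/\eta$) random measure on $[0,t]$ assigning mass $\delta$ to each $\theta^\delta_{n+1}$ with $\theta^\delta_{n+1} \le t$; its distribution function $s \mapsto \delta \sum_n \mathbf 1_{\{\theta^\delta_{n+1} \le s\}}$ converges, uniformly on $[0,t]$ and in probability, to the continuous increasing function $s \mapsto \ell_X(s)$, so $\mu^\delta$ converges weakly to $d\ell_X$ on $[0,t]$, in probability. Testing against the bounded continuous $f$ — justified by uniformly approximating $f$ on $[0,t]$ by step functions and using the uniform convergence of the distribution functions together with the continuity of $\ell_X$ — gives $\delta \sum_n f(\theta^\delta_{n+1}) \mathbf 1_{\{\theta^\delta_{n+1} \le t\}} = \int_{[0,t]} f\, d\mu^\delta \to \int_0^t f(s)\, d\ell_X(s)$ in probability, as claimed.

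The delicate point is not the passage from the unweighted to the weighted statement (routine, given the uniform-on-compacts convergence) but getting the normalization right in the first step: the limit of $\delta D_t^\delta$ must be exactly $\ell_X(t)$ rather than a multiple of it. This is what the Tanaka identification secures, and it relies crucially on the fact that in \eqref{sde:sticky} the term $d\ell_X$ enters with coefficient $1$, so that $\ell_X$ is the reflection local time, which is the one detected by crossing numbers. An alternative would be to use \cref{tm:characterization} to reduce to the analogous crossing statement for the nonsticky diffusion $\widetilde Y$ and then transport it through the time change, but that forces a random continuous integrand (namely $f$ composed with $V$) through $V^{-1}$ and is less transparent; I would prefer the direct semimartingale route above.
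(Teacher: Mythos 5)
Your approach is sound and arrives at the correct conclusion, but it is organized quite differently from the paper's. The paper does \emph{not} invoke an off-the-shelf crossing-number theorem: it directly decomposes the telescoping sum
$\sum_n f(\theta_{n+1}^\delta)\,(x(\theta_{n+1}^\delta\wedge t)-x(\tau_n^\delta\wedge t))$
via the SDE \eqref{sde:sticky}, shows that the martingale and drift pieces vanish, and recognizes the remaining $\ell_X$-piece as a Riemann--Stieltjes sum converging to $\int_0^t f\,d\ell_X$ (using that $\ell_X$ is constant on $[\theta_n^\delta,\tau_n^\delta]$ and that the widths of the intervals $[\tau_n^\delta,\theta_{n+1}^\delta]$ carrying $d\ell_X$ shrink a.s.\ as $\delta\downarrow 0$, since the zero set of $x$ has empty interior); in effect it re-proves the crossing-number theorem with the weight $f$ built in. You instead factor the problem: identify $\ell_X$ as the Skorokhod/reflection local time of the semimartingale $x$ via Tanaka, cite the crossing-number theorem for the unweighted statement, then upgrade to weights by weak convergence of the crossing measures $\mu^\delta$ against $d\ell_X$. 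Your Tanaka computation and the resulting identification $L^0=\ell_X$ are correct, and the normalization you flag as ``the delicate point'' does work out: with the Revuz--Yor semimartingale local time $\Lambda^0$ one has $\Lambda^0=2\ell_X$ here, and the semimartingale crossing-number theorem gives $\delta D^\delta_t\to\tfrac12\Lambda^0_t=\ell_X(t)$, matching your $L^0$. A few points you should tighten: (a) the general-semimartingale crossing-number theorem is not in Revuz--Yor Chapter~VI (which treats Brownian motion); the reference the paper itself relies on, and which you should cite instead, is \cite[Proposition 9.11]{LG2016}, and you should make the $\tfrac12$-normalization in that statement explicit rather than appeal to intuition about which local time crossing numbers ``detect''; (b) the convergence ``uniformly on compacts in probability'' that you use to pass to weak convergence of $\mu^\delta$ does not come for free from the fixed-$t$ statement --- you need the Dini-type argument (nondecreasing approximants, continuous nondecreasing limit, a.s.\ along subsequences) and should say so; (c) the measure-theoretic upgrade from the unweighted to the weighted statement is routine given (b) and correctly sketched. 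What you gain over the paper's route is modularity and brevity; what the paper's route gains is self-containment in a non-standard setting where one might reasonably hesitate before applying a black-box theorem to the radial part of a graph-valued sticky process.
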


	\begin{proof}
		The argument is reminiscent of the proof for the approximation of local times of semimartigales by the number of upcrossings \cite[Proposition 9.11, page 248]{LG2016}. Notice first that, for every $n \in \NN$, we have, from \cref{tm:SDE},
		\begin{align*}
		R_n := f(\theta_{n+1}^\delta) (x(\theta_{n+1}^\delta \wedge t) - x(\tau_n^\delta \wedge t))=
	f(\theta_{n+1}^\delta) \int_{\tau_{n}^\delta\wedge t}^{\theta_{n+1}^\delta\wedge t} \sigma_{i(s)}(x(s)) \mathbf{1}_{\{ x(s) > 0 \}} \,  d \widetilde W(s) \\
				 + f(\theta_{n+1}^\delta) \int_{\tau_{n}^\delta\wedge t}^{\theta_{n+1}^\delta\wedge t} b_{i(s)}(x(s)) \mathbf{1}_{\{ x(s) > 0 \}} \, ds + f(\theta_{n+1}^\delta)( \ell_X(\theta_{n+1}^\delta\wedge t) -  \ell_X(\tau_n^\delta\wedge t)).
		\end{align*}
		Summing over $n \in \NN$ we obtain
		\begin{align*}
		\sum_{n \in \NN} R_n & = \int_{0}^t \sum_{n \in \NN} f(\theta_{n+1}^\delta) \mathbf{1}_{(\tau_n^\delta, \theta_{n+1}^\delta]}(s) \sigma_{i(s)}(x(s)) \mathbf{1}_{\{ x(s) > 0 \}} \,  d \widetilde W(s)\\
		& \quad +\int_{0}^t \sum_{n \in \NN} f(\theta_{n+1}^\delta) \mathbf{1}_{(\tau_n^\delta, \theta_{n+1}^\delta]}(s) b_{i(s)}(x(s)) \mathbf{1}_{\{ x(s) > 0 \}} \, ds \\
		& \quad +\sum_{n \in \NN}f(\theta_{n+1}^\delta)( \ell_X(\theta_{n+1}^\delta\wedge t) -  \ell_X(\tau_n^\delta\wedge t))
		\end{align*}
		where we have used the fact that there are only finitely many values of $n$ such that $\theta_{n+1}^\delta \leq t$. Since $b$ and $f$ are bounded and
		\[
		 0 \leq  \mathbf{1}_{(\tau_n^\delta, \theta_{n+1}^\delta]}(s) \mathbf{1}_{\{ x(s) > 0 \}} \leq \mathbf{1}_{\{ 0 < x(s) \leq \delta \} } \xrightarrow{\delta \to 0}{} 0 \quad \textnormal{almost surely,}
		\]
		we have
		\begin{align*}
		\int_{0}^t \sum_{n \in \NN} f(\theta_{n+1}^\delta) \mathbf{1}_{(\tau_n^\delta, \theta_{n+1}^\delta]}(s) b_{i(s)}(x(s)) \mathbf{1}_{\{ x(s) > 0 \}} \, ds \xrightarrow{\delta \downarrow 0} 0 \quad \text{almost surely.}
		\end{align*}
		Similarly, using \cite[Proposition 5.8, page 111]{LG2016}, we get
		\begin{align*}
		\int_{0}^t \sum_{n \in \NN} f(\theta_{n+1}^\delta) \mathbf{1}_{(\tau_n^\delta, \theta_{n+1}^\delta]}(s) \sigma_{i(s)}(x(s)) \mathbf{1}_{\{ x(s) > 0 \}} \,  d \widetilde W(s) \xrightarrow{\delta \downarrow 0} 0 \quad \text{in probability,}
		\end{align*}
		From the properties of Riemann-Stieltjes integrals and the fact that $\ell_X$ a.s. does not increase between $\theta_{n+1}^\delta$ and $\tau_{n+1}^\delta$, we also have
		\begin{align*}
		 \int_0^t f(s) d\ell_X(s) & = \lim_{\delta \to 0 } \bigg[ \sum_{n \in \NN} f(\theta_{n+1}^\delta) \left( \ell_X(\theta_{n+1}^\delta\wedge t) -  \ell_X(\tau_n^\delta \wedge t) \right ) \\
		 & \quad + \sum_{n \in \NN} f(\tau_{n+1}^\delta) \left( \ell_X(\tau_{n+1}^\delta\wedge t) -  \ell_X(\theta_{n+1}^\delta \wedge t) \right) \bigg ] \\
		 & = \lim_{\delta \to 0 } \sum_{n \in \NN} f(\theta_{n+1}^\delta) \left( \ell_X(\theta_{n+1}^\delta\wedge t) -  \ell_X(\tau_n^\delta \wedge t) \right )
		\end{align*}
		almost surely.
				We conclude the proof by observing that
				\begin{align*}
					\sum_{n \in \NN} f(\theta_{n+1}^\delta) (x(\theta_{n+1}^\delta \wedge t) - x(\tau_n^\delta \wedge t)) = \delta  \sum_{n \in \NN} f(\theta_{n+1}^\delta) \mathbf{1}_{\{\theta_{n+1}^\delta \leq t \}} + O(\delta),
				\end{align*}
			Indeed, $x(\theta_{n+1}^\delta \wedge t) - x(\tau_n^\delta \wedge t) = \delta$ when $\theta_{n+1}^\delta \leq t$, and when $\theta_{n+1}^\delta > t$, we have $x(t) - x(\tau_n^\delta \wedge t) \leq \delta$. Hence, the claim holds.
	\end{proof}

\subsection{Freidlin-Sheu-It\^o formula}

The goal of this section is to extend the It\^o type formula proved in \cite[Lemma 2.3]{FreidlinSheu} for the nonsticky diffusion to the case of sticky diffusions. Note that the special case $N=2$ was considered in \cite[Lemma 3.5]{salins}.

\begin{tm}\label{tm:Ito}
	Let $X(t) = (i(t), x(t))$ be the process generated by \eqref{generator2} and \eqref{eq:domain} with $\eta > 0$, started at $X(0) = \mathsf{x} \in \Gamma$, and let $f \in C_{b}^{1,2}(\RR_+ \times \Gamma)$. Then,
	\begin{equation}\label{eq:Ito_1}
	\begin{split}
	& f(t,X(t)) = f(0,\mathsf{x}) + \int_0^t \left ( \partial_s f(s,X(s)) +  Lf(s,X(s)) \right ) \mathbf 1_{\{ X(s) \neq \sfv \}}  ds \\
	& \, + \int_0^t \sigma(X(s)) \partial_x f(s,X(s)) \mathbf{1}_{\{X(s) \neq \sfv \}} d \widetilde W(s)
	 + \int_0^t \left ( \eta\, \partial_s f(s,\sfv) +  \sum_{i=1}^N \rho_i \partial_x f_i(s,0) \right ) d \ell_X(s),
	\end{split}
	\end{equation}
	where $\widetilde W$ is the Brownian motion obtained in Theorem \ref{tm:SDE}. In particular, if $Lf(t,\cdot) \in C(\Gamma)$ for all $t \geq 0$, we have
	\begin{equation}\label{eq:Ito_2}
	\begin{split}
	& f(t,X(t)) = f(0,\mathsf{x}) + \int_0^t \left ( \partial_s f(s,X(s)) +  Lf(s,X(s)) \right )  ds \\
	& \, + \int_0^t \sigma(X(s)) \partial_x f(s,X(s)) \mathbf{1}_{\{X(s) \neq \sfv \}} d \widetilde W(s) + \int_0^t \left (\sum_{i=1}^N \rho_i \partial_x f_i(s,0) - \eta\, L f(s,\sfv) \right ) d \ell_X(s).
	\end{split}
	\end{equation}
\end{tm}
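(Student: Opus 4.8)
The plan is to reduce the sticky It\^o formula to the known nonsticky one of \cite[Lemma 2.3]{FreidlinSheu} via the time-change established in \cref{tm:characterization}. By that theorem we may assume, almost surely, that $X(t) = \tilde Y(V^{-1}(t))$ and $\ell_X(t) = \ell_{\tilde Y}(V^{-1}(t))$ for a nonsticky diffusion $\tilde Y(t) = (j(t),\tilde y(t))$, with $V^{-1}(t) = t - \eta \ell_X(t) = \int_0^t \mathbf 1_{\{X(s) \neq \sfv\}}\, ds$ from \eqref{simple-case}. The first step is to apply the Freidlin--Sheu formula to the function $g(r,\cdot) := f(A(r),\cdot)$ evaluated along $\tilde Y$, where $A := V$ is the inverse time change, so that $A(V^{-1}(t)) = t$; care is needed because the time variable is itself being changed, so it may be cleaner to apply the nonsticky It\^o formula to $f$ directly along $(\,\cdot\,,\tilde Y(\cdot))$ on the time scale $r$ and then substitute $r = V^{-1}(t)$, handling the $\partial_t$ term separately. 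Concretely, for fixed $t$ set $r = V^{-1}(t)$; the nonsticky formula on $[0,r]$ gives
\begin{equation*}
\begin{split}
f(r,\tilde Y(r)) &= f(0,\mathsf x) + \int_0^r \big(\partial_s f + Lf\big)(s,\tilde Y(s))\, ds + \int_0^r \sigma \partial_x f(s,\tilde Y(s))\, dW(s) \\
&\quad + \int_0^r \Big(\sum_{i=1}^N \rho_i \partial_x f_i(s,0)\Big)\, d\ell_{\tilde Y}(s),
\end{split}
\end{equation*}
which is not yet what we want because the left-hand side has $f(V^{-1}(t),X(t))$ rather than $f(t,X(t))$, and the integrals are on the ``fast'' clock.

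The second step is to change variables $s = V^{-1}(u)$ in each integral, using that $V^{-1}$ is a strictly increasing continuous bijection with $d V^{-1}(u) = du - \eta\, d\ell_X(u)$ and that $\ell_{\tilde Y}(V^{-1}(u)) = \ell_X(u)$. For the $ds$-integral, since $\mathbf 1_{\{\tilde y(V^{-1}(u)) \neq 0\}} = \mathbf 1_{\{X(u) \neq \sfv\}}$ and $d V^{-1}$ puts no mass on $\{X = \sfv\}$ (indeed $dV^{-1} = \mathbf 1_{\{X \neq \sfv\}}\, du$), we get $\int_0^t (\partial_s f + Lf)(u,X(u))\mathbf 1_{\{X(u)\neq\sfv\}}\, du$; here one uses that on $\{X(u) \neq \sfv\}$, $V^{-1}$ behaves locally like the identity. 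For the stochastic integral, the time-change formula for It\^o integrals (as in \cite[Proposition 1.5 p.181]{revuz-yor}) turns $\int_0^r \sigma\partial_x f\, dW$ into an integral against the time-changed martingale, which by \cref{tm:SDE} is $\int_0^t \sigma(X(u))\partial_x f(u,X(u))\mathbf 1_{\{X(u)\neq\sfv\}}\, d\widetilde W(u)$. For the local-time integral, $d\ell_{\tilde Y}(V^{-1}(u)) = d\ell_X(u)$ directly gives $\int_0^t \big(\sum_i \rho_i \partial_x f_i(u,0)\big)\, d\ell_X(u)$.

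The third and final step is to account for the discrepancy between $f(V^{-1}(t),X(t))$ and $f(t,X(t))$. Since $f$ is $C^1$ in time and $V^{-1}$ has finite variation, a one-dimensional chain rule (Riemann--Stieltjes) gives
\[
f(t,X(t)) - f(V^{-1}(t),X(t)) = \int_{V^{-1}(t)}^{t} \partial_s f(s, X(t))\, ds,
\]
but a cleaner route is to observe $t - V^{-1}(t) = \eta\ell_X(t)$ and, more importantly, to track the time derivative jointly: write $f(t,X(t)) = f(t, \tilde Y(V^{-1}(t)))$ and apply the chain rule to $u \mapsto f(V(u), \tilde Y(u))$, whose time-component contributes $\partial_t f(V(u),\tilde Y(u)) V'(u)\, du = \partial_t f(V(u),\tilde Y(u))(du + \eta\, d\ell_Y(u))$. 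The extra $\eta\, d\ell_Y$ term, after the change of variables, becomes exactly $\int_0^t \eta\,\partial_s f(s,\sfv)\, d\ell_X(s)$, using that $d\ell_Y$ is supported on $\{\tilde Y = \sfv\}$ so that $\tilde Y(u) = \sfv$ there; combining with the $\sum_i \rho_i\partial_x f_i$ local-time term yields the stated coefficient $\eta\,\partial_s f(s,\sfv) + \sum_i \rho_i \partial_x f_i(s,0)$. Finally, \eqref{eq:Ito_2} follows from \eqref{eq:Ito_1} by adding and subtracting $\int_0^t Lf(s,X(s))\mathbf 1_{\{X(s)=\sfv\}}\, ds = \eta\int_0^t Lf(s,\sfv)\, d\ell_X(s)$, using \eqref{eq:integral_local_time} from \cref{prop:occupation} and the continuity of $Lf(t,\cdot)$ at $\sfv$.

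The main obstacle I anticipate is making the change of variables in the $ds$-integral rigorous at the vertex: one must justify that $\int_0^r (\partial_s f + Lf)(s,\tilde Y(s))\, ds$ transforms to an integral over $[0,t]$ with the indicator $\mathbf 1_{\{X\neq\sfv\}}$ \emph{and} that no spurious contribution from $\{X = \sfv\}$ survives, which rests on the identity $dV^{-1}(u) = \mathbf 1_{\{X(u)\neq\sfv\}}\, du$ together with the fact that $Lf$ may be discontinuous at $\sfv$ so the naive substitution $s \leftrightarrow V^{-1}(u)$ must be done on the open set $\{X \neq \sfv\}$ where $V^{-1}$ is locally the identity. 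A secondary technical point is the correct bookkeeping of the $\partial_t f$ terms — the nonsticky formula already produces one $\partial_s f$ contribution on the fast clock, and the reparametrization of time produces a second one; one must check these combine to give precisely $\int (\partial_s f + Lf)\mathbf 1_{\{X\neq\sfv\}}\, ds + \int \eta\,\partial_s f(s,\sfv)\, d\ell_X(s)$ and not something with a wrong coefficient. Both points are ultimately routine once the time-change identities from \cref{tm:characterization,prop:occupation} are invoked carefully, but they are where the argument could go wrong if done hastily.
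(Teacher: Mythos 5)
Your proposal takes a genuinely different route from the paper: you try to reduce to the nonsticky Freidlin--Sheu formula via the time change $X = \tilde Y(V^{-1}(\cdot))$, whereas the paper proves the formula directly for $X$ by decomposing $f(t,X(t)) - f(0,\mathsf x)$ into three sums $Q_1^\delta + Q_2^\delta + Q_3^\delta$ along the crossing times $(\tau_n^\delta, \theta_n^\delta)$ between levels $0$ and $\delta$, and then letting $\delta \downarrow 0$ using the SDE, \cref{prop:occupation}, and \cref{coro:local_time_approx}. A working time-change proof would be shorter; the paper's direct proof sidesteps the issue below at the cost of more bookkeeping.

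However, there is a genuine gap, and it is not the one you anticipate. In Step 2 the substitution $s = V^{-1}(u)$ turns $\int_0^r (\partial_s f + Lf)(s, \tilde Y(s))\, ds$ into $\int_0^t (\partial_s f + Lf)(V^{-1}(u), X(u))\, \mathbf 1_{\{X(u)\neq\sfv\}}\, du$ --- note the time argument is $V^{-1}(u)$, not $u$. Your remark that ``on $\{X(u)\neq\sfv\}$, $V^{-1}$ behaves locally like the identity'' only means $dV^{-1}(u)=du$ there, not $V^{-1}(u)=u$; once $X$ has spent positive time at $\sfv$ you have $V^{-1}(u)<u$ strictly even while $X(u)\neq\sfv$. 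So Step 2 as written is false, and your anticipated obstacle (discontinuity of $Lf$ at $\sfv$, spurious mass at $\{X=\sfv\}$) is a red herring --- those parts are fine. The ``cleaner route'' in Step 3 does contain the right idea: apply a chain rule to $u\mapsto f(V(u),\tilde Y(u))$ so that, after $s=V^{-1}(\tau)$, the composed argument $V(V^{-1}(\tau))=\tau$ simplifies correctly in every integrand, not merely at the endpoint. But that chain rule is not the Freidlin--Sheu formula of \cite[Lemma 2.3]{FreidlinSheu}: you need a version in which the time slot of $f$ carries a \emph{random}, adapted, continuous finite-variation process $A_u=V(u)=u+\eta\ell_Y(u)$ rather than the deterministic time $u$. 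Such an extension is plausible and would follow from the same crossing-time machinery, but it is a new lemma that must be stated and proved; it cannot be invoked as known. In short, Step 2 should be discarded entirely (not merely ``cleaned''), Step 3 should be taken from the start, and the missing ingredient --- the generalized nonsticky Freidlin--Sheu formula with random finite-variation time argument --- must be supplied.
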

\begin{proof}
	We follow the proof of the nonsticky case (see \cite{FreidlinSheu,H2011}).

Without loss of generality, we may assume that $\sfx=(i,0)$. Fix $\delta > 0$. We recursively define the following sequence of stopping times : $\tau_0^\delta = \theta_0^\delta = 0$ and, for every integer $n \geq 1$,
\begin{equation}
\begin{split}
\theta_n^\delta = \inf \left \{ t \geq \tau_{n-1}^\delta : x(t) = \delta \right \},  \\
\tau_n^\delta = \inf \left \{ t \geq \theta_{n}^\delta : x(t) = 0 \right \}.
\end{split}
\end{equation}
In the case where $\sfx \neq (i, 0)$, we define instead
\[
 \tau_0^\delta = \inf \left \{ t \geq 0 : x(t) = 0 \right \},
\]
and the rest of the proof is similar.
 From the SDE \eqref{sde:sticky}, we have that the number of upcrossings of $x$ from $0$ to $\delta$ up to time $t$ is almost surely finite. We then decompose
\begin{align*}
f(t,X(t)) - f(0,\mathsf{x}) &  = \sum_{n=0}^{+\infty} f(\theta_{n+1}^{\delta} \wedge t, X(\theta_{n+1}^{\delta} \wedge t)) - f(\theta_{n}^{\delta} \wedge t, X(\theta_{n}^{\delta} \wedge t)) \\
& = Q_1^\delta + Q_2^\delta + Q_3^\delta,
\end{align*}
where
\begin{equation}
\begin{split}
Q_1^\delta & := \sum_{n \in \NN} \left (f(\theta_{n+1}^{\delta} \wedge t, X(\theta_{n+1}^{\delta} \wedge t)) - f(\tau_{n}^{\delta} \wedge t, X(\tau_{n}^{\delta} \wedge t)) \right)  \\ & \quad - \sum_{j=1}^N \sum_{n \in \NN} \delta \partial_x f_j(\theta_{n+1}^{\delta},0) \mathbf{1}_{\{\theta_{n+1}^{\delta} \leq t,\, i(\theta_{n+1}^{\delta}) = j\}},
\end{split}
\end{equation}
\begin{equation}
Q_2^\delta := \sum_{j=1}^N \sum_{n \in \NN} \delta \partial_x f_j(\theta_{n+1}^{\delta},0) \mathbf{1}_{\{\theta_{n+1}^{\delta} \leq t,\, i(\theta_{n+1}^{\delta}) = j\}},
\end{equation}
and
\begin{equation}
Q_3^\delta := \sum_{n \in \NN} f(\tau_{n}^{\delta} \wedge t, X(\tau_{n}^{\delta} \wedge t)) - f(\theta_{n}^{\delta} \wedge t, X(\theta_{n}^{\delta} \wedge t)).
\end{equation}
	We also write $Q_1^\delta = Q_{(1,1)}^\delta + Q_{(1,2)}^\delta$, where
\begin{equation}
\begin{split}
Q_{(1,1)}^\delta & := \sum_{j=1}^N \sum_{n \in \NN} \left (f_j(\theta_{n+1}^{\delta}, x(\theta_{n+1}^{\delta})) - f_j(\tau_{n}^{\delta}, x(\tau_{n}^{\delta})) - \delta \partial_x f_j(\theta_{n+1}^{\delta},0) \right ) \mathbf{1}_{\{\theta_{n+1}^{\delta} \leq t,\, i(\theta_{n+1}^{\delta}) = j\}}
\end{split}
\end{equation}
and
\begin{equation}
Q_{(1,2)}^\delta := \sum_{j=1}^N \sum_{n \in \NN} \left (f_j(\theta_{n+1}^{\delta} \wedge t, x(\theta_{n+1}^{\delta} \wedge t)) - f_j(\tau_{n}^{\delta} \wedge t, x(\tau_{n}^{\delta} \wedge t)) \right) \mathbf{1}_{\{\theta_{n+1}^{\delta} > t,\, i(\theta_{n+1}^{\delta}) = j\}}.
\end{equation}

\noindent\underline{Step 1}: $Q_{1}^\delta \xrightarrow{\delta \to 0}{} \eta \int_0^t \partial_s f(s,\sfv) \ell_X(ds)$ in probability.

From the definition of $\theta_{n} , \, \tau_{n}$ and the continuity of $X$ we know that, for every $n \geq 0$ and $j \in \{1, \dots, N\}$,
\[
f_j(\theta_{n+1}^\delta, x(\theta_{n+1}^\delta)) = f_j(\theta_{n+1}^\delta, \delta) \quad \textnormal{and} \quad f_j(\tau_{n}^\delta, x(\tau_{n}^\delta)) = f_j(\tau_{n}^\delta,0) \quad \textnormal{almost surely}.
\]
It then follows that, on each edge $j$,
\begin{align*}
& f_j(\theta_{n+1}^\delta, x(\theta_{n+1}^\delta)) - f_j(\tau_{n}^\delta, x(\tau_{n}^\delta)) - \delta \partial_x f_j(\theta_{n+1}^{\delta},0) \\
& \quad = f_j(\theta_{n+1}^\delta, \delta) - f_j(\tau_{n}^\delta, 0) - \delta \partial_x f_j(\theta_{n+1}^{\delta},0) \\
& \quad = f_j(\theta_{n+1}^\delta, \delta) - f_j(\theta_{n+1}^\delta, 0) - \delta \partial_x f_j(\theta_{n+1}^{\delta},0) + f_j(\theta_{n+1}^\delta, 0) - f_j(\tau_{n}^\delta, 0) \\
& \quad = \int_0^\delta \partial_z f_j(\theta_{n+1}^\delta,z) - \partial_x f_j(\theta_{n+1}^\delta, 0)\, dz + \int_{\tau_n^\delta}^{\theta_{n+1}^\delta} \partial_s f_j(s,0)\, ds.
\end{align*}
Therefore, we have
\begin{align*}
& \left \lvert Q_{(1,1)}^\delta - \left [ \sum_{j=1}^N \sum_{n \in \NN} \int_{\tau_n^\delta}^{\theta_{n+1}^\delta} \partial_s f_j(s,0) \mathbf{1}_{\{\theta_{n+1}^{\delta} \leq t,\, i(\theta_{n+1}^{\delta}) = j\}}  \, ds \right ] \right \rvert \\ & \qquad   = \left \lvert Q_{(1,1)}^\delta - \sum_{n \in \NN} \int_0^{\theta_{n+1}^\delta } \partial_s f(s,0) \mathbf{1}_{\{ x(s) \leq \delta,\, \theta_{n+2} > t  \}} \, ds \right \rvert \\
& \qquad \leq N \delta^2 \lVert \partial_x^2 f \rVert_{\infty} \sum_{n \in \NN} \mathbf{1}_{\{\theta_{n+1}^{\delta} \leq t\}}.
\end{align*}

On the other hand
\begin{align*}
& \left \lvert Q_{(1,2)}^\delta - \sum_{j=1}^N \sum_{n \in \NN} \int_{\tau_n^\delta}^t \partial_s f_j(s,0) \mathbf{1}_{\{\theta_{n+1}^{\delta} > t,\, i(\theta_{n+1}^{\delta}) = j\}}\, ds \right \rvert \\
& \quad  = \left \lvert \sum_{j=1}^N \sum_{n \in \NN} \left (f_j(t, x(t)) - f_j(\tau_{n}^{\delta}, x(\tau_{n}^{\delta})) - \int_{\tau_n^\delta}^t \partial_s f_j(s,0)\, ds \right) \mathbf{1}_{\{\theta_{n+1}^{\delta} > t,\, i(\theta_{n+1}^{\delta}) = j\}} \right \rvert \\
& \quad\leq \sum_{j=1}^N \sum_{n \in \NN} \left \lvert (f_j(t, x(t)) - f_j(\tau_{n}^{\delta}, x(\tau_{n}^{\delta})) - \int_{\tau_n^\delta}^t \partial_s f_j(s,0)\, ds \right \rvert \mathbf{1}_{\{\theta_{n+1}^{\delta} > t,\, i(\theta_{n+1}^{\delta}) = j\}} \\
& \quad  \leq \delta \lVert\partial_x f \rVert_{\infty}.
\end{align*}
We conclude that
\begin{align*}
& \left \lvert Q_1^\delta - \sum_{n \in \NN} \int_{\tau_{n}^\delta \wedge t}^{\theta_{n+1}^\delta \wedge t} \partial_s f(s,0) \mathbf{1}_{\{x(s) \leq \delta \}}\, ds \right \rvert \\
& \quad \leq \left \lvert Q_{(1,1)}^\delta - \sum_{j=1}^N \sum_{n \in \NN} \int_{\tau_n^\delta}^{\theta_{n+1}^\delta} \partial_s f_j(s,0) \mathbf{1}_{\{\theta_{n+1}^{\delta} \leq t,\, i(\theta_{n+1}^{\delta}) = j\}}\, ds  \right \rvert \\ & \qquad + \left \lvert Q_{(1,2)}^\delta - \sum_{j=1}^N \sum_{n \in \NN} \int_{\tau_n^\delta}^t \partial_s f_j(s,0) \mathbf{1}_{\{\theta_{n+1}^{\delta} > t,\, i(\theta_{n+1}^{\delta}) = j\}}\, ds \right \rvert \\
& \quad \leq N \delta^2 \lVert \partial_x^2 f \rVert_{\infty} \sum_{n \in \NN} \mathbf{1}_{\{\theta_{n+1}^{\delta} \leq t\}} + \delta \lVert\partial_x f \rVert_{\infty}.
\end{align*}
From \cref{coro:local_time_approx}, we recall that $\delta \sum_{n \in \NN} \mathbf{1}_{\{\theta_{n+1}^{\delta} \leq t\}}$ goes, for $\delta \downarrow 0$, in probability to $\ell_X(t)$. Then, we have
\begin{align*}
 N \delta^2 \lVert \partial_x^2 f \rVert_{\infty} \sum_{n \in \NN} \mathbf{1}_{\{\theta_{n+1}^{\delta} \leq t\}} \xrightarrow{\delta \to 0}{} 0 \quad \text{in probability}
\end{align*}
and 
\begin{align*}
\delta \lVert\partial_x f \rVert_{\infty} \xrightarrow{\delta \to 0}{}.
\end{align*}
Therefore,
\begin{align*}
Q_1^\delta - \sum_{n \in \NN} \int_{\tau_{n}^\delta \wedge t}^{\theta_{n+1}^\delta \wedge t} \partial_s f(s,0) \mathbf{1}_{\{X(s) \leq \delta \}}\, ds \xrightarrow{\delta \to 0}{} 0 \quad \text{in probability.}
\end{align*}
But, from Lebesgue's dominated convergence theorem, we get
\[
\lim_{\delta \to 0} \sum_{n \in \NN} \int_{\tau_{n}^\delta \wedge t}^{\theta_{n+1}^\delta \wedge t} \partial_s f(s,0) \mathbf{1}_{\{X(s) \leq \delta \}}\, ds = \int_0^t \partial_s f(s,0) \mathbf{1}_{\{ X(s) = \sfv \}}\, ds,
\]
Then, our first claim holds true since \cref{prop:occupation} yields
\[
\int_0^t \partial_s f(s,0) \mathbf{1}_{\{ X(s) = \sfv \}}\, ds = \eta \int_0^t \partial_s f(s,0) \ell_X(ds).
\]
	\noindent\underline{Step 2}: $Q_{2}^\delta \xrightarrow{\delta \to 0}{} \int_0^t \sum_{i=1}^N  \rho_i \partial_x f_i(s,0) d\ell_X(s)$ in probability.

	Notice first that since the process is stated from $\sfv$, it follows from the SDE \eqref{sde:sticky}, the occupation time formula \eqref{simple-case} and the fact that $\sfv$ is not a trap for $X$ (\cref{prop:trap}) that
	\begin{equation}\label{eq:local_time_positive}
	 \ell_X(t) > 0 \quad \textnormal{a.s. for all $t > 0$.}
	\end{equation}
	Write
	\[
	Q_2^\delta = \sum_{n \in \NN} \delta  \mathbf{1}_{\{\theta_{n+1}^{\delta} \leq t \}} \sum_{j=1}^N \partial_x f_j(\theta_{n+1}^{\delta},0) \mathbf{1}_{\{i(\theta_{n+1}^{\delta}) = j\}}.
	\]
	 Define
	\[
		\tilde \rho_j^\delta = \mathbf{P}_\sfv \left (i(X (\theta_{n}^\delta)) = j \right )
	\]
	 recall from \cref{lem:kirchhoff} that
	\begin{align}
	\label{edge-selection}
	\lim_{\delta \to 0} \tilde \rho_j^\delta =\rho_j.
	\end{align}
	Decompose
	\begin{align*}
	& \left \lvert Q_2^\delta - \sum_{i=1}^N \int_0^t \rho_i \partial_x f_i(s,0) d\ell_X(s) \right \rvert \\
	& \quad \leq \left \lvert \sum_{j=1}^N \sum_{n \in \NN} \left(\mathbf{1}_{\{i(X(\theta_{n+1}^\delta)) = j\}} - \tilde \rho_j^\delta \right) \delta \partial_xf_j(\theta_{n+1}^\delta,0) \mathbf{1}_{\{\theta_{n+1}^\delta \leq t \}} \right \rvert \\
	& \qquad + \left \lvert \sum_{j=1}^N \tilde \rho_j^\delta \left( \sum_{n \in \NN} \delta \partial_xf_j(\theta_{n+1}^\delta,0) \mathbf{1}_{\{\theta_{n+1}^\delta \leq t \}} - \int_0^t \partial_x f_j(s,0)\, d \ell_X(s) \right) \right \rvert \\
	& \qquad + \left \lvert \sum_{j=1}^N \left( \tilde \rho_j^\delta - \rho_j \right) \int_0^t \partial_x f_j(s,0) \, d\ell_X(s) \right \rvert \\
	& \quad =: R_1^\delta + R_2^\delta + R_3^\delta.
	\end{align*}
	Notice that it directly follows from \eqref{edge-selection} that $R_3^\delta \to 0$ in probability.
Moreover, from \cref{coro:local_time_approx}, we have
\begin{align}\label{eq:CV_ST}
\sum_{n \in \NN} \delta  \mathbf{1}_{\{\theta_{n+1}^{\delta} \leq t \}}  \partial_x f_j(\theta_{n+1}^{\delta},0) \xrightarrow{\delta \to 0}{} \int_0^t \partial_x f_j(s,0) d\ell_X(s) \quad \text{in probability}
\end{align}
so that $R_2^\delta \to 0$ in probability.
We are therefore left with the case of $R_1^\delta$.

We set $N_\delta(t) = \sum_{n \in \NN}  \mathbf{1}_{\{\theta_{n+1}^{\delta} \leq t \}}$ and recall from \cref{coro:local_time_approx} that
\[
	\lim_{\delta \to 0} \delta N_\delta(t) = \ell_X(t) \quad \textnormal{in probability.}
\]
As a consequence of \eqref{eq:local_time_positive}, we deduce that
\begin{equation}\label{eq:CV_Ndelta}
 N_\delta(t) \xrightarrow{\delta \to 0} +\infty \quad \textnormal{in probability.}
\end{equation}
Moreover, from the strong Markov property we know that the random variables $\left(i(X(\theta_{n+1}^\delta)) \right)_{n \in \NN}$ are i.i.d. so that the random variables
\[
 Z_j^{n,\delta} := \mathbf{1}_{\{i(\theta_{n+1}^\delta) = j \}} - \tilde \rho_j^\delta
\]
are also i.i.d. for $j$ and $\delta$ fixed and satisfy
\begin{equation}
 \mathbf{E}_\sfv \left [ Z_j^{n,\delta} \right] = 0 \quad \textnormal{and} \quad \mathbf{E}_\sfv \left[ \left \lvert Z_j^{n,\delta} \right \rvert^{2} \right ] \leq 1.
\end{equation}
Set
\[
 S_j^\delta(m) := \sum_{n=0}^{m-1} Z_j^{n,\delta} \quad \textnormal{for all } m \in \NN
\]
and observe that
\begin{equation}
 \mathbf{E}_\sfv \left[ \left \lvert m^{-1} S_j^{\delta}(m) \right \rvert^2 \right ] \leq \frac{1}{m} \quad \textnormal{for all } m \geq 1.
\end{equation}
Let $\epsilon > 0$. From Tchebychev's  inequality we deduce
\begin{equation}
 \mathbf{P}_{\sfv} \left( \left \lvert m^{-1} S_j^\delta(m) \right \rvert \geq \epsilon \right ) \leq \frac{1}{\epsilon^2} \; \mathbf{E}_\sfv \left[ \left \lvert m^{-1} S_j^{\delta}(m) \right \rvert^2 \right ]  \leq \frac{1}{\epsilon^2 m} \quad \textnormal{for all } m \geq 1.
\end{equation}
In particular, for every $\kappa \in (0,1)$, there exists $M = M(\epsilon,\kappa)> 0$ such that
\begin{equation}
 \mathbf{P}_{\sfv}\left( \left \lvert m^{-1} S_j^\delta(m) \right \rvert \geq \epsilon \right ) \leq \frac{\kappa}{2} \quad \textnormal{for all } m \geq M.
\end{equation}
In addition, it follows from \eqref{eq:CV_Ndelta} that there exists $\delta_0 > 0$ such that
\[
 \mathbf{P}_{\sfv} \left( N_\delta(t) < M \right) \leq \frac{\kappa}{2} \quad \textnormal{for all } 0 < \delta \leq \delta_0.
\]
In the end, we have proved that for every $\epsilon > 0$ and $\kappa \in (0,1)$, there exists $\delta_0 > 0$ such that
\begin{equation}
 \mathbf{P}_{\sfv} \left( \left \lvert N_\delta^{-1}(t) S_j^\delta\left(N_\delta(t) \right) \right \rvert \geq \epsilon \right ) \leq \kappa \quad \textnormal{for all } 0 < \delta \leq \delta_0,
\end{equation}
so that $N_\delta^{-1}(t) S_j^\delta \left(N_\delta(t) \right) \xrightarrow{\delta \to 0}{} 0$ in probability.
Noticing that
\begin{align*}
	R_1^\delta & \leq \delta N_\delta(t) \rVert \partial_x f \rVert_{\infty} \sum_{j=1}^N \left \lvert \tilde \rho_j^\delta  - N_\delta^{-1}(t) \sum_{n=0}^{N_\delta(t)-1} \mathbf{1}_{\{i(\theta_{n+1}^{\delta}) = j\}} \right \rvert \\
	& = \delta N_\delta(t) \rVert \partial_x f \rVert_{\infty} \sum_{j=1}^N \left \lvert N_\delta^{-1}(t) S_j^\delta \left(N_\delta(t)\right) \right \rvert,
\end{align*}
and since
\[
	\mathbf{P}_\sfv \left ( \left \{ \sup_{\delta > 0} \delta N_\delta(t) < \infty \right \} \right ) = 1
\]
because $\ell_X(t)$ is a.s. finite, we conclude that
\[
 \lim_{\delta \to 0} R_1^\delta = 0 \quad \textnormal{in probability}.
\]
This proves the claim.

\noindent\underline{Step 3}:
\begin{align*}
& \lim_{\delta \to 0} Q_{3}^\delta = f(0,x) + \int_0^t \left (\partial_t f(s,X(s)) + Lf(s,X(s)) \right ) \mathbf{1}_{\{X(s) \neq \sfv \}} \\ & \qquad + \int_0^t \sigma(X(s)) \partial_x f(s,X(s)) \mathbf{1}_{\{X(s) \neq \sfv \}} d \widetilde W(s)
\end{align*}
in probability.

This follows from the standard one-dimension It\^o formula and the dominated convergence theorem for stochastic integrals.

\noindent\underline{Step 4}: Conclusion.

It should be noted that
\begin{align*}
f(t,X(t)) - f(0,\mathsf{x}) = Q_1^\delta + Q_2^\delta + Q_3^\delta,
\end{align*}
by using steps 1 to 3, we get
\begin{align*}
& \lim_{\delta \to 0} \, Q_1^\delta + Q_2^\delta + Q_3^\delta=\int_0^t \left ( \partial_s f(s,X(s)) +  Lf(s,X(s))  \right )\mathbf{1}_{\{ X(s) \neq \sfv \}} ds \\
& \,  + \int_0^t \sigma(X(s)) \partial_x f(s,X(s)) \mathbf{1}_{\{X(s) \neq \sfv \}} d \widetilde W(s) + \int_0^t \left (\eta \partial_s f(s,0) +  \sum_{i=1}^N \tilde \rho_i \partial_x f_i(s,0) \right ) d\ell_X(s).
\end{align*}
We observe
\begin{align*}
&\int_0^t \left ( \partial_s f(s,X(s)) +  Lf(s,X(s))  \right )\mathbf{1}_{\{ X(s) \neq \sfv \}} ds  =\\ &\int_0^t \left ( \partial_s f(s,X(s)) +  Lf(s,X(s))  \right ) ds - \int_0^t \left ( \partial_s f(s,X(s)) +  Lf(s,X(s))  \right ) \mathbf{1}_{\{X(s) = \sfv \}} ds,
\end{align*}
but, from Proposition \ref{prop:occupation}, we have
\begin{align*}
\int_0^t \left ( \partial_s f(s,X(s)) +  Lf(s,X(s))  \right ) \mathbf{1}_{\{X(s) = \sfv \}} ds = \eta \int_0^t \left(\partial_s f(s,\sfv) +  Lf(s,\sfv)\right) d\ell_X(s).
\end{align*}
Then, we conclude
\begin{align*}
& f(t,X(t)) = f(0,\mathsf{x}) + \int_0^t \left ( \partial_s f(s,X(s)) +  Lf(s,X(s)) \right )  ds  \\
& \, + \int_0^t \sigma(X(s)) \partial_x f(s,X(s)) \mathbf{1}_{\{X(s) \neq \sfv \}} d \widetilde W(s) + \int_0^t \left ( \sum_{i=1}^N \rho_i \partial_x f_i(s,0) - \eta\, L f(s,\sfv) \right ) d \ell_X(s).
\end{align*}
\end{proof}

\section{Application to differential equations}
\label{section:representation_formulas}

Let $u \in C^2(\Gamma) \cap C_0(\Gamma)$ be a solution to
\begin{equation}\label{eq:application_elliptic}
\begin{cases}
 Lu(\sfx) - \lambda u(\sfx) = f(\sfx) \quad & \textnormal{for } \sfx \neq \sfv, \\
 u_i(0) = u_j(0) \quad & \textnormal{for all } 1 \leq i, j \leq N, \\
 \sum_{i=1}^N \rho_i u_i'(0) - \eta \lambda u(\sfv) = \eta \theta,
\end{cases}
\end{equation}
for $\lambda > 0$, $f \in PC(\Gamma)$ and $\theta \in \RR$. Let also $X$ be the sticky diffusion process generated by $L$. In the case where $f \in C(\Gamma)$, it follows from \eqref{eq:application_elliptic} that $Lu \in C(\Gamma)$ with $Lu(\sfv) = f(\sfv) + \lambda u(\sfv)$. The third condition in \eqref{eq:application_elliptic} then becomes
\[
 \sum_{i=1}^N \rho_i u_i'(0) - \eta Lu(\sfv) = \eta \theta - f(\sfv).
\]
If we also assume that $f(\sfv) = \eta \theta$, we obtain that $u \in D(L)$. Then, setting
\begin{equation}\label{eq:Feynmann_Kac_stationnary}
 v(t,\sfx) := u(\sfx)e^{- \lambda t} \quad \textnormal{for } \sfx \in \Gamma, \, t \geq 0,
\end{equation}
we have $v(t,\cdot) \in D(L)$. For $(t_n)_{n \in \NN}$ such that $t_n \xrightarrow{n \to \infty} +\infty$, a straightforward application of Dynkin's formula yields
\begin{align*}
 u(\sfx) - \mathbf E_\sfx \left[u(X(t_n))e^{-\lambda t_n} \right] & = - \mathbf E_\sfx \left [\int_0^{t_n} f(X(s)) e^{- \lambda s} \, ds \right].
\end{align*}
Passing to the limit $n \to \infty$ we conclude that
\[
 u(\sfx) = - \mathbf E_\sfx \left [\int_0^{+\infty} f(X(s)) e^{- \lambda s} \, ds \right].
\]
The Itô formula \cref{tm:Ito} allows for generalization of this fact to cases where $f \notin C(\Gamma)$ or $f(\sfv) \neq \eta \theta$.
\begin{prop}
 Let $u \in C_b^2(\Gamma)$ be a solution to \eqref{eq:application_elliptic} with $\lambda > 0$, $f \in PC(\Gamma)$ bounded and $\theta \in \RR$. Then
 \[
  u(\sfx) = - \mathbf E_\sfx \left[ \int_0^{+\infty} \left ( f(X(s)) \mathbf 1_{\{X(s) \neq \sfv \}} + \theta \mathbf 1_{\{X(s) = \sfv \}} \right) e^{- \lambda s} \, ds \right ],
 \]
 where $X$ is the sticky diffusion generated by $L$ with stickiness $\eta$.
\end{prop}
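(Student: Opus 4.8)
The plan is to apply the It\^o formula of \cref{tm:Ito} to the function $v(t,\sfx) := u(\sfx)\,e^{-\lambda t}$. Since $u \in C_b^2(\Gamma)$, the function $v$ satisfies $v, \partial_t v \in C_b(\RR_+ \times \Gamma)$ and $\partial_x v_i, \partial_x^2 v_i \in C_b(\RR_+ \times \RR_+)$ for each $i$, so that $v \in C_b^{1,2}(\RR_+ \times \Gamma)$ and \cref{tm:Ito} is applicable. Because $f$ is only assumed to belong to $PC(\Gamma)$, it need not be continuous at $\sfv$, so $Lv(t,\cdot)$ need not lie in $C(\Gamma)$; hence I would use the first form \eqref{eq:Ito_1} of the It\^o formula rather than \eqref{eq:Ito_2}. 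This is exactly the situation that the identity in the proposition is meant to handle, extending the elementary Dynkin-formula argument sketched before the statement, which required $f \in C(\Gamma)$ and $f(\sfv) = \eta\theta$.

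Next I would read off the coefficients appearing in \eqref{eq:Ito_1} using the three conditions in \eqref{eq:application_elliptic}. For $\sfx \neq \sfv$, the first line of \eqref{eq:application_elliptic} gives
\[
 \partial_s v(s,\sfx) + Lv(s,\sfx) = e^{-\lambda s}\bigl( Lu(\sfx) - \lambda u(\sfx) \bigr) = e^{-\lambda s} f(\sfx),
\]
while the transition condition (third line of \eqref{eq:application_elliptic}) gives, for the local-time coefficient,
\[
 \eta\, \partial_s v(s,\sfv) + \sum_{i=1}^N \rho_i \partial_x v_i(s,0) = e^{-\lambda s}\Bigl( \sum_{i=1}^N \rho_i u_i'(0) - \eta \lambda u(\sfv) \Bigr) = \eta\,\theta\, e^{-\lambda s}.
\]
Substituting these into \eqref{eq:Ito_1} on the interval $[0,t]$, with $X(0) = \sfx$, yields
\begin{align*}
 u(\sfx)e^{-\lambda t} &= u(\sfx) + \int_0^t e^{-\lambda s} f(X(s)) \mathbf 1_{\{X(s) \neq \sfv\}}\, ds + \int_0^t e^{-\lambda s} \sigma(X(s)) \partial_x u(X(s)) \mathbf 1_{\{X(s)\neq\sfv\}}\, d\widetilde W(s) \\
 &\quad + \eta\,\theta \int_0^t e^{-\lambda s}\, d\ell_X(s).
\end{align*}

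Then I would take $\mathbf E_\sfx$ of both sides. Since $\sigma$ is bounded (it lies in $PC(\Gamma)$) and $\partial_x u$ is bounded (as $u \in C_b^2(\Gamma)$), the integrand of the stochastic integral is bounded and progressively measurable, so that integral is a genuine $L^2$-martingale null at $0$ and its expectation vanishes. Applying \cref{prop:occupation} with the bounded measurable function $s \mapsto e^{-\lambda s}$ gives $\eta \int_0^t e^{-\lambda s}\, d\ell_X(s) = \int_0^t e^{-\lambda s} \mathbf 1_{\{X(s) = \sfv\}}\, ds$ almost surely, whence
\[
 \mathbf E_\sfx\bigl[ u(X(t))e^{-\lambda t} \bigr] = u(\sfx) + \mathbf E_\sfx\left[ \int_0^t \bigl( f(X(s)) \mathbf 1_{\{X(s)\neq\sfv\}} + \theta\, \mathbf 1_{\{X(s)=\sfv\}} \bigr) e^{-\lambda s}\, ds \right].
\]
Finally I would let $t \to \infty$: the left-hand side tends to $0$ because $u$ is bounded and $\lambda > 0$, and on the right-hand side the integrand is dominated by $(\lVert f \rVert_\infty + |\theta|)\, e^{-\lambda s}$, which is integrable on $\RR_+$, so the dominated convergence theorem yields the claimed formula. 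The only points needing genuine (but routine) care are the verification that the stochastic integral is a true martingale — immediate from the boundedness of $\sigma$ and $\partial_x u$ — and the dominated-convergence bookkeeping in the limit $t \to \infty$; the rest is a direct substitution into \cref{tm:Ito} combined with \cref{prop:occupation}.
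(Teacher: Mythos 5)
Your proof is correct and follows essentially the same route as the paper's: apply the It\^o formula of \cref{tm:Ito} to $v(t,\sfx)=u(\sfx)e^{-\lambda t}$, substitute the equation and junction condition from \eqref{eq:application_elliptic}, convert the local-time integral via \cref{prop:occupation} with $s\mapsto e^{-\lambda s}$, take expectations, and pass to the limit $t\to\infty$ by dominated convergence. The extra care you take over the martingale property of the stochastic integral and the choice of the form \eqref{eq:Ito_1} versus \eqref{eq:Ito_2} is sound bookkeeping that the paper leaves implicit.
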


\begin{proof}
 Define $v \in C_b^{1,2}(\RR_+ \times \Gamma)$ by \eqref{eq:Feynmann_Kac_stationnary} and let $t_n$ be a sequence of positive real numbers tending to $+\infty$. Then, the It\^o formula \cref{tm:Ito} implies that
 \begin{align*}
  u(\sfx) & - \mathbf{E}_\sfx \left[u(X(t_n))e^{-\lambda t_n} \right] = \mathbf{E}_{\sfx}\left[\int_0^{t_n} \left(\lambda u(X(s)) - Lu(X(s)) \right)e^{-\lambda s} \mathbf{1}_{\{X(s) \neq \sfv \}} \, ds \right] \\
  & \qquad + \mathbf{E}_{\sfx} \left[\int_0^{t_n} \left( \lambda \eta u(\sfv) - \sum_{i=1}^N \rho_i u_i'(0) \right)e^{-\lambda s} d\ell_X(s) \right] \\
  & \quad = - \mathbf{E}_\sfx \left[ \int_0^{t_n} f(X(s))e^{-\lambda s} \mathbf{1}_{\{X(s) \neq \sfv\}} ds + \int_0^{t_n} \eta \theta e^{-\lambda s} d\ell_X(s) \right] \\
  & \quad = - \mathbf{E}_\sfx \left[ \int_0^{t_n} \left(f(X(s)) \mathbf{1}_{\{X(s) \neq \sfv\}} + \theta \mathbf{1}_{\{X(s) = \sfv \}} \right)e^{-\lambda s} \, ds \right].
 \end{align*}
 The conclusion then follows from Lebesgue's convergence theorem.
\end{proof}

Similarly, we obtain a Feynman-Kac formula for parabolic equations on $\Gamma$ with dynamic junction conditions.
\begin{prop}[Feynman-Kac]
 Let $T > 0$ and $u \in C^{1,2}_b((0,T) \times \Gamma) \cap C_b([0,T] \times \Gamma)$  be a solution to
 \begin{equation}
  \begin{cases}
   -\partial_t u(t,\sfx) - L u(t,\sfx) = f(t,\sfx) \quad & \textnormal{for all } t \in (0,T), \, \sfx \neq \sfv, \\
   u_i(t,0) = u_j(t,0) \quad & \textnormal{for all } 1 \leq i, j \leq N, \, t \in (0,T), \\
   -\eta \partial_t u(t,\sfv) - \sum_{i=1}^N \rho_i \partial_x u_i(t,0) = \eta \theta(t) \quad &  \textnormal{for all } t \in (0,T), \\
   u(T, \cdot) = u_T,
  \end{cases}
 \end{equation}
 with $f \in C([0,T], PC(\Gamma))$, $u_T \in C_b(\Gamma)$ and $\theta \in C([0,T])$. Then
 \[
  u(t,\sfx) = \mathbf E \left[u_T(X(T)) + \int_t^T f(s,X(s)) \mathbf 1_{\{X(s) \neq \sfv \}} + \theta(s) \mathbf 1_{\{X(s) = \sfv \}} ds \big \vert X(t) = \sfx \right ],
 \]
 where $X$ is the sticky diffusion generated by $L$ with stickiness $\eta$.
\end{prop}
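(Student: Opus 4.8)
The plan is to follow the proof of the preceding proposition, substituting the It\^o formula of \cref{tm:Ito} for the application of Dynkin's formula used there. Since the coefficients $\sigma$, $b$, $\rho_i$ and $\eta$ do not depend on time, the process $X$ is time-homogeneous, so by the Markov property the conditional expectation $\mathbf E[\,\cdot\mid X(t)=\sfx]$ equals the expectation $\mathbf E_\sfx$ of the corresponding functional of the path started afresh at $\sfx$ with the time origin shifted to $t$. Fixing $t\in[0,T]$ and $\sfx\in\Gamma$, I would set $\tilde u(s,\cdot):=u(t+s,\cdot)$ for $s\in[0,T-t]$, which solves on $(0,T-t)\times\Gamma$ the analogue of the stated boundary value problem with the data $f$, $u_T$, $\theta$ replaced by their time-shifts, and work with $X$ under $\mathbf P_\sfx$ together with the Brownian motion $\widetilde W$ and the increasing process $\ell_X$ of \cref{tm:SDE,tm:Ito}.

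Applying \cref{tm:Ito} to $\tilde u$ and to $X$ run up to time $T-t$ gives
\[
 \tilde u(T-t,X(T-t)) = u(t,\sfx) + \int_0^{T-t}(\partial_s\tilde u + L\tilde u)(s,X(s))\,\mathbf 1_{\{X(s)\neq\sfv\}}\,ds + M(T-t) + \int_0^{T-t}\Big(\eta\,\partial_s\tilde u(s,\sfv) + \sum_{i=1}^N\rho_i\,\partial_x\tilde u_i(s,0)\Big)\,d\ell_X(s),
\]
where $M(r):=\int_0^r\sigma(X(s))\partial_x\tilde u(s,X(s))\mathbf 1_{\{X(s)\neq\sfv\}}\,d\widetilde W(s)$. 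The equation $\partial_s u+Lu=-f$ on $\Gamma\setminus\{\sfv\}$, read at time $t+s$, turns the first integral into $-\int_0^{T-t}f(t+s,X(s))\mathbf 1_{\{X(s)\neq\sfv\}}\,ds$, while the dynamic condition, which gives $\eta\,\partial_s\tilde u(s,\sfv)+\sum_i\rho_i\,\partial_x\tilde u_i(s,0)=-\eta\,\theta(t+s)$, together with the occupation time identity \eqref{eq:integral_local_time} of \cref{prop:occupation}, rewrites the $d\ell_X$-term as $-\int_0^{T-t}\theta(t+s)\mathbf 1_{\{X(s)=\sfv\}}\,ds$. Since $u\in C^{1,2}_b$ and $\sigma\in PC(\Gamma)$ is bounded, the integrand of $M$ is bounded, hence $M$ is a true martingale with $\mathbf E_\sfx[M(T-t)]=0$. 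Taking $\mathbf E_\sfx$ of the identity, using $\tilde u(T-t,\cdot)=u_T$ and undoing the time shift then yields the claimed formula.

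The one genuine technicality is that \cref{tm:Ito} requires its test function to lie in $C^{1,2}_b(\RR_+\times\Gamma)$, i.e. to be regular up to the initial time, whereas $u$ is only assumed $C^{1,2}_b$ on the open interval $(0,T)$. I would handle this by localizing in time: for small $\varepsilon>0$ the restriction of $u$ to $[t\vee\varepsilon,\,T-\varepsilon]\times\Gamma$, a slab contained in $(0,T)\times\Gamma$ on which $u$ is $C^{1,2}_b$, extends to a function of $C^{1,2}_b(\RR_+\times\Gamma)$, and running the argument above with that extension on $[t\vee\varepsilon,\,T-\varepsilon]$ produces the same identity, the values of the extension outside that slab never entering it. This gives the formula with $t\vee\varepsilon$ and $T-\varepsilon$ in place of $t$ and $T$; letting $\varepsilon\downarrow0$, the terminal term tends to $u_T(X(T))$ almost surely by continuity of $u$ on $[0,T]\times\Gamma$ and of the paths of $X$, and is bounded, so dominated convergence applies there and, $f$ and $\theta$ being bounded, to the time integral as well, while continuity of $u$ at the left endpoint disposes of the case $t=0$. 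I expect this limiting procedure, rather than the It\^o computation, to be the point demanding the most care.
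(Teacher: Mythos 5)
Your proof is correct and takes the same route as the paper: apply the It\^o formula of \cref{tm:Ito} to $u$, identify the integrands via the PDE and the dynamic junction condition, take expectations to kill the stochastic-integral term, and convert the $d\ell_X$-integral to an occupation-time integral via \cref{prop:occupation}. Your extra care with the fact that $u$ is only $C^{1,2}_b$ on the open slab $(0,T)\times\Gamma$, whereas \cref{tm:Ito} is stated for $C^{1,2}_b(\RR_+\times\Gamma)$, fills a gap that the paper's own one-line proof silently glosses over, and the time-localization plus dominated-convergence argument you give for it is sound.
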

\begin{proof}
 Applying the It\^o formula \cref{tm:Ito} to $u$, we get
 \begin{align*}
  u(t,x) & = \mathbf{E} \left[ u(T,X(T)) - \int_t^T \left(\partial_t u(s,X(s)) + Lu(s,X(s)) \right) \mathbf{1}_{\{X(s) \neq \sfv \}}\, ds \big \vert X(t) = \sfx \right] \\
  & \quad - \mathbf{E} \left[\int_t^T \left(\eta \partial_t u(s,\sfv) + \sum_{i=1}^N \rho_i \partial_x u_i(s,0) \right) d\ell_X(s) \big \vert X(t) = \sfx \right ]\\
  & = \mathbf{E} \left[u(T,X(T)) + \int_t^T f(s,X(s)) \mathbf{1}_{\{X(s) \neq \sfv \}} \, ds + \int_t^T \eta \theta(s) d\ell_X(s) \big \vert X(t) = \sfx \right] \\
  & = \mathbf{E} \left[u(T,X(T)) + \int_t^T f(s,X(s)) \mathbf{1}_{\{X(s) \neq \sfv \}} + \theta(s) \mathbf{1}_{\{X(s) = \sfv\}} \, ds \big \vert X(t) = \sfx \right].
 \end{align*}
\end{proof}

\appendix

\section{A boundary value problem}
\label{section:BVP}

We use here the same notations and assumptions as those from \cref{section:statement}.

\begin{prop}\label{prop:bvp}
 For every $\delta > 0$ and $f \in \bigoplus_{i=1}^N C_b([0,\delta))$, with $f_i(0) = f_j(0)$ for all $1 \leq i,j \leq N$, there is a unique $u \in \bigoplus_{i=1}^N C^2([0,\delta])$ such that
 \begin{equation}\label{eq:bvp}
  \begin{cases}
   L_i u_i(x) = f_i(x) \quad & \textnormal{for all } x \in (0, \delta), \, 1 \leq i \leq N, \\
	\eta L u (\sfv) - \sum_{i=1}^N \rho_i {u_i}'(0) = 0, \\
	u_i(\delta) = 0 \quad & \textnormal{for all } i \in \{1,\dots, N \}, \\
	u_i(0) = u_j(0) \quad & \textnormal{for all } i,j \in \{1,\dots, N \},
  \end{cases}
 \end{equation}
 given by
 \[
  u_i(x) = - \frac{\kappa_i  \alpha_i(x)}{\alpha_i(0)} - \beta_i(x) \quad \textnormal{for all } 0 \leq x \leq \delta,
  \]
  where
  \begin{align*}
  \kappa_i & = \left (\eta f(\sfv) - \sum_{j=1}^N \frac{\rho_j(\beta_1(0) - \beta_j(0))}{ \alpha_j(0)} \right ) \left (\sum_{j=1}^N \frac{\rho_j}{\alpha_j(0)} \right )^{-1} + \beta_1(0) - \beta_i(0) \\
  \alpha_i(x) & = \int_x^\delta \exp \left (-\int_0^y \frac{2b_i(z)}{\sigma_i^2(z)} dz \right) dy \\
  \beta_i (x) & = \int_x^\delta \left ( \int_0^y \frac{2 f(z)}{\sigma_i^2(z)} \exp \left (\int_0^z \frac{2b_i(s)}{\sigma_i^2(s)} ds  \right) dz \right ) \exp \left (-\int_0^y \frac{2b_i(z)}{\sigma_i^2(z)} dz \right) dy
 \end{align*}
 for all $1 \leq i \leq N$.
 Furthermore, we have 
 \[
  \max_{1 \leq i \leq N} \max_{x \in [0,\delta]} \left \lvert u_i''(x) \right  \rvert = O(1) \quad  \textnormal{as } \delta \downarrow 0,
 \]
 where the hidden constant only depends on $f$ through $\lVert f \rVert_{\infty}$.
\end{prop}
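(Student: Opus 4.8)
The plan is to verify directly that the displayed function solves \eqref{eq:bvp}, to establish uniqueness through the associated homogeneous problem, and finally to read off the $C^2$-bound by tracking how each building block scales as $\delta \downarrow 0$. Throughout I abbreviate by $f_i$ the restriction of $f$ to the $i$-th edge; note that the asserted $C^2$-regularity up to $x = \delta$ uses that $f_i$ extends continuously there, which is the case in all our applications.

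I would begin by recording the structure of the scalar operators. Since $L_i$ has no zeroth-order term, $L_i w = g$ on $(0,\delta)$ is equivalent to the first-order linear equation $w'' + \tfrac{2 b_i}{\sigma_i^2} w' = \tfrac{2 g}{\sigma_i^2}$ for $w'$, solved by the integrating factor $\mu_i(x) := \exp(\int_0^x \tfrac{2 b_i(z)}{\sigma_i^2(z)}\, dz)$. A one-line differentiation then shows that $\alpha_i' = -\mu_i^{-1}$ and $\beta_i'(x) = -\mu_i(x)^{-1} \int_0^x \tfrac{2 f_i(z)}{\sigma_i^2(z)}\mu_i(z)\, dz$, hence $L_i \alpha_i = 0$ and $L_i \beta_i = -f_i$ on $(0,\delta)$, with $\alpha_i(\delta) = \beta_i(\delta) = 0$; constants also lie in $\ker L_i$, and since $\alpha_i' < 0$ the pair $\{1,\alpha_i\}$ is a basis of $\ker L_i$. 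With these identities in hand, checking the four conditions in \eqref{eq:bvp} for $u_i = -\kappa_i\alpha_i/\alpha_i(0) - \beta_i$ is routine: the formula is well posed because $\alpha_i(0) > 0$ and $\sum_j \rho_j/\alpha_j(0) > 0$; the equation and the condition $u_i(\delta) = 0$ are immediate; continuity at the vertex holds since $u_i(0) = -\kappa_i - \beta_i(0)$ and the definition of $\kappa_i$ is arranged exactly so that this quantity does not depend on $i$; and, using $\mu_i(0) = 1$ and $\beta_i'(0) = 0$, one gets $u_i'(0) = \kappa_i/\alpha_i(0)$, so that $\sum_i \rho_i u_i'(0) = \sum_i \rho_i \kappa_i/\alpha_i(0)$, which collapses to $\eta f(\sfv) = \eta L u(\sfv)$ precisely by the choice of $\kappa_i$ (here one uses $L u(\sfv) = L_i u_i(0) = f_i(0) = f(\sfv)$).

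For uniqueness, the difference $w$ of two solutions solves the homogeneous problem; since $L_i w_i \equiv 0$ we have $L w(\sfv) = 0$, so the vertex condition becomes $\sum_i \rho_i w_i'(0) = 0$. Writing $w_i = a_i + b_i \alpha_i$, the Dirichlet condition forces $a_i = 0$, continuity gives $b_i \alpha_i(0) = c$ for a common constant $c$, and then $w_i'(0) = -c/\alpha_i(0)$, so $0 = -c \sum_i \rho_i/\alpha_i(0)$ with all $\rho_i, \alpha_i(0) > 0$ yields $c = 0$ and $w \equiv 0$.

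The quantitative estimate is where the actual work lies, and I expect it to be the main obstacle. Since the problem is linear in $f$, it suffices to bound $\max_i \|u_i''\|_{\infty}$ by a constant multiple of $\|f\|_{\infty}$, uniformly for $\delta \leq \delta_0$. With $M := 2\|b\|_{\infty}/\sigma_0^2$ one has $e^{-M\delta} \leq \mu_i^{\pm 1}(x) \leq e^{M\delta}$ on $[0,\delta]$, so $\mu_i^{\pm 1} = 1 + O(\delta)$ uniformly; consequently $\alpha_i(0) = \delta(1 + O(\delta))$, in particular $\alpha_i(0) \geq c\,\delta$, while $|\int_0^x \tfrac{2 f_i}{\sigma_i^2}\mu_i| = O(\delta)\|f\|_{\infty}$ uniformly in $x \leq \delta$, whence $|\beta_i(0)| = O(\delta^2)\|f\|_{\infty}$ and $\|\beta_i'\|_{\infty} = O(\delta)\|f\|_{\infty}$. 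Inserting these into the definition of $\kappa_i$ — using $(\sum_j \rho_j/\alpha_j(0))^{-1} = O(\delta)$, $\sum_j \rho_j(\beta_1(0)-\beta_j(0))/\alpha_j(0) = O(\delta)\|f\|_{\infty}$, and $|f(\sfv)| \leq \|f\|_{\infty}$ — gives $\kappa_i = O(\delta)\|f\|_{\infty}$, hence $\kappa_i/\alpha_i(0) = O(1)\|f\|_{\infty}$. Since $u_i'(x) = (\kappa_i/\alpha_i(0))\mu_i(x)^{-1} - \beta_i'(x)$, this yields $\|u_i'\|_{\infty} = O(1)\|f\|_{\infty}$, and finally rearranging the ODE gives $u_i''(x) = 2\sigma_i(x)^{-2}(f_i(x) - b_i(x) u_i'(x)) = O(1)\|f\|_{\infty}$ because $\sigma_i \geq \sigma_0$ and $b_i$ is bounded. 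The crux is exactly this bookkeeping: the factor $1/\alpha_i(0) \sim 1/\delta$ appearing in $u_i$ is always multiplied by a quantity of order $\delta$ — which is what the structure of $\kappa_i$ guarantees — so no $1/\delta$ blow-up survives in $u_i'$, and therefore none in $u_i''$.
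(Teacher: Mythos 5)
Your proof is correct and, in the uniqueness step, takes a genuinely different route from the paper. The paper proves uniqueness by invoking the strong maximum principle together with Hopf's lemma: it argues that a nonzero solution to the homogeneous problem would have to attain a strict extremum at the vertex, forcing all $v_i'(0)$ to have the same strict sign, which contradicts the Kirchhoff condition. You instead exploit that $\ker L_i$ is explicitly two-dimensional with basis $\{1,\alpha_i\}$: the Dirichlet condition kills the constant, continuity at $\sfv$ identifies the remaining coefficients up to a common $c$, and the Kirchhoff condition forces $c=0$ because $\sum_i \rho_i/\alpha_i(0) > 0$. Your argument is entirely elementary and self-contained, while the paper's would survive the addition of a zeroth-order term $-c(x)u$ with $c\geq 0$, where the explicit kernel is no longer available — that is what the maximum-principle route buys. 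For existence the two proofs are mirror images (the paper derives the formula by two integrations and then solves for the constants, you verify the given formula by checking $L_i\alpha_i=0$, $L_i\beta_i=-f_i$, and the three boundary/junction conditions); both are complete. For the $C^2$-bound your argument is marginally cleaner than the paper's: instead of differentiating the explicit expression for $u_i'$ once more, you bound $\kappa_i/\alpha_i(0)$, hence $\|u_i'\|_\infty$, and then read off $u_i''=\tfrac{2}{\sigma_i^2}(f_i - b_i u_i')$ directly from the ODE — same order-counting, fewer terms to track. One small presentational remark: it is worth stating explicitly, as you do implicitly via $Lu(\sfv)=L_iu_i(0)=f_i(0)=f(\sfv)$, that the compatibility hypothesis $f_i(0)=f_j(0)$ is exactly what makes the junction condition well posed; the paper uses the same identity without comment.
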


\begin{proof}
We divide the proof into three steps.\\

 \emph{1. Proof of existence.} Using the positivity of $\sigma^2$ \eqref{eq_diffusion_positive}, we first rewrite the problem in the form
 \[
  u_i''(x) + \frac{2 b_i(x)}{\sigma_i^2(x)} u_i'(x) = \frac{2 f(x)}{\sigma_i^2(x)} \quad \textnormal{for all } x \in (0,\delta).
 \]
 This implies that 
 \begin{equation}\label{eq:bvp_derivative}
  u_i'(x) = \left ( A_i + \int_0^x \frac{2f_i(y)}{\sigma_i^2(y)} \exp \left (\int_0^y \frac{2b_i(z)}{\sigma_i^2(z)} dz  \right) dy \right ) \exp \left (-\int_0^x \frac{2b_i(y)}{\sigma_i^2(y)} dy \right) \quad \textnormal{for } x \in (0, \delta),
 \end{equation}
 were the parameters $A_i$ are constants. In particular we have $u_i'(0) = \lim_{x \downarrow 0} u_i'(x) = A_i$. It follows that 
 \[
  u_i(x) = - \int_x^\delta \left ( A_i + \int_0^y \frac{2 f_i(z)}{\sigma_i^2(z)} \exp \left (\int_0^z \frac{2b_i(s)}{\sigma_i^2(s)} ds  \right) dz \right ) \exp \left (-\int_0^y \frac{2b_i(z)}{\sigma_i^2(z)} dz \right) dy \quad \textnormal{for } x \in (0, \delta).
 \]
 Notice in particular that $u_i(\delta) = 0$. Setting 
 \begin{align*}
  \alpha_i(x) & = \int_x^\delta \exp \left (-\int_0^y \frac{2b_i(z)}{\sigma_i^2(z)} dz \right) dy \\
  \beta_i (x) & = \int_x^\delta \left ( \int_0^y \frac{2 f_i(z)}{\sigma_i^2(z)} \exp \left (\int_0^z \frac{2b_i(s)}{\sigma_i^2(s)} ds  \right) dz \right ) \exp \left (-\int_0^y \frac{2b_i(z)}{\sigma_i^2(z)} dz \right) dy
 \end{align*}
 we get $u_i(x) = - A_i \alpha_i(x) - \beta_i(x)$. The continuity of $u$ then implies $A_i \alpha_i(0) + \beta_i(0) = A_j \alpha_j(0) + \beta_j(0)$ for all $1 \leq i,j \leq N$. Since $\alpha_i(0) > 0$ we obtain
 \[
  A_i = \frac{A_1 \alpha_1(0) + \beta_1(0) - \beta_i(0)}{\alpha_i(0)} \quad \textnormal{for all } 1 \leq i \leq N.
 \]
The second condition in \eqref{eq:bvp} then reads 
\begin{align*}
 \eta f(\sfv) &= \eta L u(\sfv) = \sum_{i=1}^N \rho_i u_i'(0) = \sum_{i=1}^N \rho_i A_i = A_1 \alpha_1(0) \left (\sum_{i=1}^N \frac{\rho_i}{\alpha_i(0)} \right ) + \sum_{i=1}^N \frac{\rho_i (\beta_1(0) - \beta_i(0))}{\alpha_i(0)}
\end{align*}
so that 
\[
 A_1 = \left (\eta f(\sfv) - \sum_{i=1}^N \frac{\rho_i(\beta_1(0) - \beta_i(0))}{ \alpha_i(0)} \right ) \left (\sum_{i=1}^N \frac{\rho_i \alpha_1(0)}{\alpha_i(0)} \right )^{-1}.
\]
\\

\emph{2. Proof of uniqueness. } Clearly it is enough to prove that $v=0$ is the unique element in $\bigoplus_{i=1}^N C_b([0,\delta))$ satisfying
	\begin{equation}
		\begin{cases}
		 \frac{1}{2} \sigma^2_i(x) {v_i}''(x) + b_i(x){v_i}'(x) = 0\quad & \textnormal{for all } x \in (0, \delta), \, 1 \leq i \leq N, \\
		 \sum_{i=1}^N \rho_i {v_i}'(0) = 0, \\
		 v_i(0) = v_j(0) \quad & \textnormal{for all } i,j \in \{1,\dots, N \}, \\
		  v_i(\delta) = 0 \quad & \textnormal{for all } i \in \{1,\dots, N \}.
		\end{cases}
	\end{equation}
Applying the strong maximum principle \cite[Theorem 3 p.349]{E2010} to $v_i$ we deduce that it cannot achieve its maximum (resp. minimum) on $(0,\delta)$ unless it is constant. In the case where $v_i$ is constant we deduce that $v_i = 0$ from the Dirichlet boundary condition. We therefore proceed by assuming that the $v_i$ is not all constant. If the maximum (resp. minimum) of $v$ is attained at $0$, then, Hopf's lemma \cite[Lemma p.347]{E2010}, or the simpler result \cite[Theorem 2 p.4]{PW1984}, implies that $v_i'(0) < 0$ (resp. $v_i'(0) > 0$) for all $1 \leq i \leq N$, which contradicts the Kirchhoff conditions. We conclude that the maximum (resp. minimum) must be $v_i(\delta) = 0$ for some $1 \leq i \leq N$, so that $v \leq 0$ (resp. $v \geq 0$). This proves uniqueness. \\

\emph{3. Proof of estimate.} From \eqref{eq:bvp_derivative} and uniqueness of solutions we obtain 
\begin{align*}
 u_i''(x) & =  \left (\frac{2 f(x)}{\sigma_i^2(x)} - \frac{2b_i(x)}{\sigma_i^2(x)} \left ( \int_0^x \frac{2 f_i(y)}{\sigma_i^2(y)} \exp \left (\int_0^y \frac{2b_i(z)}{\sigma_i^2(z)} dz  \right) dy \right ) \right )\exp \left (-\int_0^x \frac{2b_i(y)}{\sigma_i^2(y)} dy \right) \\
 & \quad - \frac{2A_i b_i(x)}{\sigma_i^2(x)}\exp \left (-\int_0^x \frac{2b_i(y)}{\sigma_i^2(y)} dy \right)
\end{align*}
so that 
\[
 \lvert u_i''(x) \rvert \leq \frac{2(\lVert f \rVert_{\infty} + A_i \lVert b \rVert_\infty)}{\sigma_0^2} + \frac{4 \lVert b \rVert_\infty \lVert f \rVert_{\infty} \delta }{\sigma_0^4} \exp \left ({\frac{2\delta \lVert b \rVert_\infty}{\sigma_0^2}} \right ).
\]
Notice also that 
\[
 \delta \exp \left (- \frac{2\delta \lVert b \rVert_\infty}{\sigma_0^2} \right ) \leq \alpha_i(0) \leq \delta
\]
and 
\[
 0 < \beta_i(0) \leq \frac{2 \delta^2}{\sigma_0^2} \exp \left (\frac{2 \delta \lVert b \rVert_\infty}{\sigma_0^2} \right ).
\]
So that $A_i = O(1)$ as $\delta \downarrow 0$, with hidden constant depending on $f$ through $\lVert f \rVert_{\infty}$. The conclusion follows.
\end{proof}

\section*{Acknowledgment}
\noindent
The authors sincerely express their gratitude to Fabio Camilli and Mirko D'Ovidio for their ideas.

J.B. was partially supported by the ANR (Agence Nationale de la Recherche) through the
COSS project ANR-22-CE40-0010 and the Centre Henri Lebesgue ANR-11-LABX-0020-01 and by Rennes Métropole through the Collège doctoral de Bretagne. This work was initiated while J.B. was visiting  Sapienza Università di Roma. He
wishes to thank the university for its hospitality.

F.C. thanks Sapienza and the group INdAM-GNAMPA for the support.
The research of F.C. has been mostly funded by MUR under the project PRIN 2022 - 2022XZSAFN: Anomalous Phenomena on Regular and Irregular Domains:
Approximating Complexity for the Applied Sciences - CUP B53D23009540006 - PNRR M4.C2.1.1.\\
Web Site: \url{https://www.sbai.uniroma1.it/~mirko.dovidio/prinSite/index.html}.

\bibliographystyle{abbrvnat}
\bibliography{sticky-diffusion-graphs.bib}
\end{document}